\documentclass[10pt]{amsart}
\usepackage[left=1in,right=1in,top=1in,bottom=1in]{geometry}
\usepackage[utf8]{inputenc}
\usepackage{setspace}
\usepackage{graphicx} 
\usepackage{xcolor}
\usepackage{amssymb}
\usepackage{amsmath}
\usepackage{mathtools}
\usepackage{amsthm}
\usepackage[noabbrev,capitalize,nameinlink]{cleveref}
\usepackage{bbm}
\usepackage{centernot}
\usepackage[OT1]{fontenc}
\usepackage{appendix}
\usepackage{tikz}
\usetikzlibrary{decorations.pathreplacing,angles,quotes}

\newtheorem{thm}{Theorem}
\newtheorem{conjecture}[thm]{Conjecture}
\newtheorem{prop}[thm]{Proposition}
\newtheorem{lemma}[thm]{Lemma}

\newtheorem{cor}[thm]{Corollary}

\theoremstyle{remark}

\theoremstyle{definition}

\newcommand{\lam}{\lambda}
\newcommand{\surj}{\twoheadrightarrow}
\newcommand{\Prob}{\mathbb{P}}
\newcommand{\R}{\mathbb{R}}
\newcommand{\Z}{\mathbb{Z}}
\newcommand{\N}{\mathbb{N}}
\newcommand{\diam}{\mathrm{diam}}

\newcommand{\vol}{\mathrm{vol}}
\newcommand{\im}{\mathrm{im}}
\newcommand{\lowerlambda}{\underline{\lam}}
\newcommand{\upperlambda}{\overline{\lam}}

\newcommand{\Ghat}{\widehat{G}}
\newcommand{\Cech}{\check{C}}

\title{Detection of first homology via
random geometric  graphs in the thermodynamic regime}
\author{Christian Gorski}
\address{University of Washington}
\email{cgorski1@uw.edu}

\date{August 2026}

\begin{document}

\begin{abstract}
    Consider a random geometric graph 
    $G_M(n;r)$ on a compact 
    Riemannian manifold $M$,
    whose vertices are a cloud of
    $n$ independently sampled points,
    and whose edges connect vertices at distance $\le r$.
    
    We show that, in the thermodynamic 
    (i.e. bounded expected average degree) regime, if $G_M(n;r)$
    is supercritical in the sense of 
    continuum percolation, then the first homology group $H_1(M)$ of $M$ can be correctly inferred from 
    $G_M(n;r)$ with high probability as $n \to \infty$.
    Specifically, one can obtain $H_1(M)$ by taking the cycle space
    of $G_M(n;r)$ and quotienting out all the cycles of
    metric diameter $O(r|\log r|)$ 
    (or of graph diameter
    $O(|\log r|)$).
    Our method 
    of estimating $H_1(M)$
    exploits a coarse-topological fact about supercritical percolation,
    as opposed to usual methods, which
    examine the topology of neighborhoods
    of the point cloud.
    Whereas previous methods 
    use combinatorial models which require $O(n \log n)$
    edges,
    our method only requires $O(n)$ edges.

    We also show that, 
    in all phases of the 
    thermodynamic regime, 
    if one instead takes the quotient by
    cycles of metric diameter $o(r|\log r|)$,
    with high probability,
    one will not recover $H_1(M)$. Thus $\Theta(r|\log r|)$ is the ``right scale.''

    On the way, we show
    that an arbitrary compact $d$-dimensional Riemannian manifold
    has a \emph{first homological percolation threshold}
    in the sense of Bobrowski and Skraba \cite{BS2020}
    which coincides with the continuum percolation threshold on $\R^d$,
    a result previously only known for the flat torus.
    This strongly suggests that our results are optimal, in
    the sense that
    $H_1(M)$ cannot be inferred from $G_M(n;r)$
    in the subcritical thermodynamic regime.

    All results hold for homology with arbitrary coefficients.
\end{abstract}

\maketitle

\section{Introduction}

A fundamental question in topological data analysis is the following:
given a collection of data points
(``point cloud'')
sampled from a manifold $M$,
can we infer the topology of the manifold?
A natural approach is to form a simplicial complex out of our collection of points by taking our $0$-skeleton (vertices)
to be our given point cloud,
connecting sufficiently close pairs
of points by $1$-simplices (edges),
filling in sufficiently small
triangles by $2$-simplices,
and so on.
One then hopes that our combinatorial model (i.e. the constructed simplicial
complex) shares topological features
with the underlying manifold $M$
that we wish to detect.

Some classical topological invariants are given by the \emph{homology groups} ~{$(H_k(M)=Z_k(M)/B_k(M))_{k \in \N}$} of $M$. A brief review of homology is given below in \Cref{sec:homology defs}, but for the present discussion it suffices to know that $H_k(X)$ is a module that encodes the ``$k$-dimensional holes'' of 
a space $X$, and that an embedding of our simplicial complex $\mathcal{C}$ into the manifold $M$ induces homomorphisms $H_k(\mathcal{C}) \to H_k(M)$. If this natural map is an \emph{isomorphism} with high probability, we can recover the topological feature $H_k(M)$
of the original manifold from the combinatorial model $\mathcal{C}$.
However, for the usual choices of complex $\mathcal{C}$, in order for this map to be an isomorphism,
the point cloud must
be rather dense in $M$.
For instance, in the case that $\mathcal{C}$ is the \v{C}ech complex, Bobrowski \cite{Bobrowski22} shows that in order for the induced map $H_1(\mathcal{C}) \to H_1(M)$ to be an isomorphism, the expected average degree of $\mathcal{C}$ (thought of as a graph) must be on the order of the logarithm of the number of vertices.\footnote{In fact, \cite{Bobrowski22} gives precise thresholds for the property that the natural map $H_k(\mathcal{C}) \to H_k(M)$ is an isomorphism for each $k$.}
One might therefore conjecture that $H_k(M)$ cannot be recovered from $\mathcal{C}$ unless $\mathcal{C}$ has diverging average degree.
The goal of this paper
is to show that, on the contrary,
at least in the case $k=1$,
one can still correctly deduce
$H_1(M)$ with high probability
from our combinatorial model
in a regime with \emph{constant} average degree.

More precisely,
let $M$ be a $d$-dimensional 
connected\footnote{Since
a general compact manifold is 
a disjoint union of 
finitely many compact
connected components,
analogous
theorems for general compact
Riemannian manifolds
will follow immediately
from our results.}
compact Riemannian manifold of volume 1.
For $n \in \N$ and $r>0$
define the \emph{random geometric graph} $G_M(n;r)$ as follows:
sample $n$ points independently from
the Riemannian volume measure
$dM$  on $M$, and declare this to be
the vertex set of $G_M(n;r)$;
edges are then given by pairs of vertices with Riemannian distance at most $r$.
We allow $r$ to depend on $n$ and imagine that $n \to \infty$.

The regime where $nr^d$
stays bounded away from $0$ and $\infty$ is known as the \emph{thermodynamic regime} for geometric random graphs,
and this regime is the focus of our paper.
This is equivalently the regime
where the expected average degree of $G_M(n;r)$ stays bounded away from $0$ and $\infty$.
In this regime, $G_M(n;r)$ is with high probability disconnected,
and has a close relationship to a continuum percolation
model introduced by Gilbert in 1961 \cite{gilbert1961random},
which we refer to as the \emph{Poisson Boolean model}. For a detailed picture of 
connectivity properties of random geometric
complexes in various regimes,
see for instance the survey \cite{ComplexesSurvey}.

For each $d \ge 2$,
there exists $0<\lam_c(d)<\infty$
(depending on $d$ but \emph{not}
on $M$)
such that for $nr^d =:\lam>\lam_c$,
with probability tending to $1$
there is a unique ``giant'' connected component of $G_M(n;r)$
which pervades $M$,
and for $nr^d =:\lam < \lam_c$,
all connected components
of $G_M(n;r)$ are small.
We will show that we can
infer $H_1(M)$ from $G_M(n;r)$
in the thermodynamic regime
with high probability as $r \to 0$
as long as $\lam > \lam_c$.

To describe how to compute $H_1(M)$ from $G_M(n;r)$, first
recall the \emph{cycle space}
$Z_1(G_M(n;r))$ of $G_M(n;r)$, which is generated by edge-cycles in the usual graph theoretic sense (for more details, refer to \cref{sec:homology defs}).
Our main theorem will say that one can obtain $H_1(M)$
by quotienting out the ``small cycles'' from $Z_1(G_M(n;r))$.
More precisely,
for $s>0$,
let $Z_1^s(G_M(n;r))$
be the submodule of 
$Z_1(G_M(n;r))$
generated by all ${z \in Z_1(G_M(n;r))}$
with $\diam(|z|) \le s$,
where $|z| \subseteq M$ is the support of $z$ (see \Cref{sec:homology notation} for more details).\footnote{Here,
$\diam(|z|)$ is measured with
respect to the Riemannian metric $d_M$.
We also provide two versions of \cref{isomorphism} which do not 
make reference to the Riemannian metric: one in terms of graph distance, and one in terms of Euclidean distance (assuming $M$ is
embedded in Euclidean space).
See \Cref{sec: objections}
for statements and a more thorough discussion.
}
We then have:
\begin{thm} \label[thm]{isomorphism}
    Let $d \ge 2$ and fix $\lam_0 > \lam_c(d)$.
    Given $0 < \kappa < \infty$,
    there exist $0< Q'(\lam_0,\kappa,d) <\infty$
    and $r_0(\lam_0,\kappa,M)>0$
    such that, if $0<r \le r_0$
    and $n r^d \ge \lam_0$,
    \[
        \Prob(
        H_1(M)
        \cong
        Z_1(G_M(n;r))/Z^{Q'r|\log r|}_1(G_M(n;r)))
        \ge 1 - O(r^\kappa).
    \]
\end{thm}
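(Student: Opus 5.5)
We will prove that the natural map $\Phi: Z_1(G)\to H_1(M)$, with $G=G_M(n;r)$, is surjective and has kernel exactly $Z_1^{s}(G)$ for $s=Q'r|\log r|$. The map is defined by realizing each edge as a minimizing geodesic segment; for $r$ below the injectivity radius this segment is well defined up to a negligible set of pairs. The argument has three steps: the kernel contains the small cycles, the kernel is generated by small cycles, and the map is onto.

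\textbf{Step 1: small cycles lie in the kernel.} Suppose $\diam(|z|)\le s$ and $s$ is below the convexity radius of $M$, which holds once $r_0$ is small. Then $|z|$ lies in a geodesically convex ball. This ball is contractible, so $\Phi(z)=0$. Hence $Z_1^s\subseteq\ker\Phi$ deterministically.

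\textbf{Step 2: the kernel is generated by small cycles.} This is the main obstacle, and it is where supercriticality enters. Cover $M$ by a fine triangulation, or by a good cover, at scale $L=Qr|\log r|$. Let $K$ be its nerve, so that $H_1(K)\cong H_1(M)$.

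We need a "local connectivity" event for each cell $B$. Consider the concentric ball $3B$. We require that, within $3B$, the points of $G$ in $B$ lying in the giant component are connected to each other. We also require that every component meeting $B$ of diameter $\ge L/10$ is this same local giant.

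For this we invoke the standard finite-box supercritical estimates for the Poisson Boolean model. By depoissonization, and by comparing $M$ locally to $\R^d$ through normal coordinates, whose metric distortion is $1+O(L^2)$, these estimates transfer to $G$. They say that in a box of side $\ell$ the probability of two disjoint crossing clusters, or of a large cluster not joined to the crossing one, is at most $e^{-c\ell/r}$. We need the corresponding statement with $\lam$ slightly perturbed, which is where $\lam_0>\lam_c$ strictly is used. Taking $\ell\asymp L$ with $Q$ large relative to $\kappa$ makes each failure probability $r^{\kappa+d+1}$. A union bound over the $O(L^{-d})$ cells then gives probability $1-O(r^\kappa)$.

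On this good event, we build a chain map in the reverse direction. Pick a representative vertex $v_B$ of the local giant in each cell $B$. Send each edge $BB'$ of $K$ to a path from $v_B$ to $v_{B'}$ inside $3B\cup3B'$, which exists by the overlap of local giants. Call this $\psi:Z_1(K)\to Z_1(G)$.

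Now take any cycle $z\in Z_1(G)$. Long edge-paths of $z$ pass through large components, which on the good event are local giants. We then homotope $z$ onto $\psi(\pi(z))$, where $\pi$ assigns to each vertex a cell containing it. The difference decomposes into cycles of diameter $O(L)$. Cycles of $z$ that lie entirely in small components have diameter $<L/10$ already.

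It follows that $\Phi(z)=0$ implies $\pi(z)$ is a boundary in $K$. Each triangle of $K$ pulls back under $\psi$ to a cycle of diameter $O(L)$, so $z\in Z_1^{Q'r|\log r|}$ with $Q'=CQ$.

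\textbf{Step 3: surjectivity.} Every class in $H_1(K)\cong H_1(M)$ is represented by a simplicial cycle $c$ in $K$. Its image $\psi(c)$ is a cycle in $G$ whose image under $\Phi$ is homotopic to $c$, since each path stays within $O(L)$ of the corresponding edge of $K$. Hence $\Phi$ is onto.

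Combining the three steps, $\Phi$ induces $Z_1(G)/Z_1^{s}(G)\cong H_1(M)$ on the good event. Everything is at the chain level, so the argument works with arbitrary coefficients.
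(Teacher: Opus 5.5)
Your proposal is correct in outline, but it takes a genuinely different route from the paper.

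\textbf{How the routes differ.} The paper stays inside singular chains on $M$. It first proves an approximation lemma: a $1$-chain of small simplices with boundary in the giant is replaced by a chain in $\mathring{G}^n_M(r)$, together with an explicit $2$-chain of nearby support. Then, given $z=\partial\sigma$ with $\sigma\in C_2(M)$, it peels off pieces $z_U$ one cover element at a time. That induction forces the delicate Lebesgue-number-versus-``ply'' bookkeeping in the decomposition and good-cover lemmas.

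You instead build a coarse simplicial approximation. You have $\pi\colon C_\bullet(G)\to C_\bullet(K)$ and $\psi\colon C_{\le 1}(K)\to C_\bullet(G)$ with $z-\psi\pi(z)=\sum_e t_e w_e$, a sum of $O(L)$-cycles; the connecting paths $\gamma_u$ cancel because $\partial z=0$. You then obtain the bounding $2$-chain combinatorially in $K$ from the nerve theorem, so each nerve triangle $T$ contributes the single small relation $\psi(\partial T)$.

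What your route buys: a much shorter, more conceptual proof, with no error propagation and no ply bound, and surjectivity falls out of the same construction. What it costs: you need a good cover by convex balls and the nerve theorem. You also need to identify the map $H_1(K)\to H_1(M)$ induced by $\Phi\circ\psi$ with the nerve isomorphism. This is standard: $\Phi\circ\psi$ kills each $\partial T$, and $\psi(BB')$ differs from the geodesic between centres by a cycle in a convex ball of radius $O(L)$. It is exactly what justifies your ``it follows that $\Phi(z)=0$ implies $\pi(z)$ is a boundary.'' The paper needs only that balls of radius below $\mathrm{inj}(M)$ are contractible.

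\textbf{Points to tighten.} All are fixable.
\begin{enumerate}
\item Choose $\pi(u)$ with $\bar{B}(u,L/2)\subseteq\pi(u)$, i.e.\ use a Lebesgue number larger than $r$. ``A cell containing $u$'' does not guarantee that the two endpoints of an edge go to intersecting cells, so $\pi$ would not be simplicial.
\item Defining $\psi(BB')$ requires connecting giant vertices of $B'$, which lie in $3B$ but not necessarily in $B$. Your local event must therefore be stated on an enlarged ball. In fact the paper's event $A_{Q,\rho}(n,r)$ already supplies everything you use, so your argument runs on it with $Q'=C\rho Q$:
\begin{itemize}
\item $A^1$ gives a giant vertex in every cell of radius $Qr|\log r|$;
\item $A^2$ joins giant vertices at distance $O(L)$ by a path inside a ball of radius $O(\rho L)$;
\item $A^3$ bounds the diameter of all other components by $L$.
\end{itemize}
\item Local uniqueness and local connectivity are not monotone events. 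They do not transfer from the homogeneous model by domination ``with $\lam$ slightly perturbed.'' You need inhomogeneous estimates, which the paper imports from \cite{DGM26}.
\item Conditioning on $N=n$ costs a factor $n^{1/2}$. So ``by depoissonization'' only works for $n$ polynomial in $r^{-1}$, and the regime $n\ge r^{-(d+1)}$ needs the separate, easy argument of the paper's dense case.
\end{enumerate}
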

Note that if $G_M(n;r)$ is in the thermodynamic regime, $r = O(n^{-1/d})$
decays to $0$ as $n \to \infty$; but \cref{isomorphism} holds in denser regimes as well, and is interesting as long as $r = o(1)$.

Let us briefly remark on how the above
theorem differs from standard methods of approximating $H_k(M)$.
Most often, one creates a complex called the \v{C}ech complex
out of the given point cloud to approximate the topology of $M$.\footnote{Another complex known as the \emph{Vietoris-Rips complex} is also a popular choice, although it seems that the reasoning is often just that this is a less computationally expensive proxy for the \v{C}ech complex.}
For sufficiently small $r>0$, the \v{C}ech complex is homotopy equivalent to a union of balls around our point cloud; one then can find conditions under which this union of balls has topological features (e.g. homology groups) which match those of $M$.
One could also try to interpret our construction above as approximating $H_1(M)$ using a complex; indeed, $Z_1(G_M(n;r))/Z^{Q'r|\log r|}_1(G_M(n;r))$
is equal to the first homology of a complex obtained from $G_M(n;r)$
by ``filling in'' every $1$-cycle of diameter 
at most $Q' r|\log r|$ by a $2$-cell.
However,
in our case, there is no specific subspace $X \subseteq M$ such that
this complex is deterministically homotopy equivalent to $X$.
Instead, we are taking advantage of a \emph{probabilistic}
coarse-topological fact about supercritical continuum percolation,
namely that in a box with side length $R$, 
any $1$-cycle can be written as a sum of of
$1$-cycles of diameter at most $O(\log R)$.
This idea is the key innovation in this paper.
It also appears that this particular coarse-topological property of continuum percolation has not been explicitly identified in the literature previously.

To obtain the isomorphism
in \cref{isomorphism},
we first recall that 
there exists
a natural map 
$Z_1(G_M(n;r)) \to H_1(M)$.
One should imagine that the natural map takes a (combinatorial) edge-cycle in $G_M(n;r)$
to the corresponding piecewise-geodesic closed path in $M$, which may or may not bound a surface in $M$. For more details on these points,
see \Cref{homology review}.
The first step to showing \cref{isomorphism}
is showing that this natural map is \emph{surjective}
in the supercritical regime.
\begin{thm} \label[thm]{surjectivity}
    Let $d \ge 2$ and let $\lam_0 > \lam_c(d)$. 
    Then, if $nr^d \ge \lam_0$, we have
    \[
        \Prob(Z_1(G_M(n;r)) \to H_1(M)
        \mbox{ is surjective}) \to 1
    \]
    as $n \to \infty$.

    On the other hand,
    if $\lam_0 < \lam_c(d)$
    and $nr^d \le \lam_0$
    then 
    \[
    \Prob(Z_1(G_M(n;r)) \to H_1(M)
        \mbox{ is the } 0 \mbox{ map} ) \to 1
    \]
    as $n \to \infty$.
\end{thm}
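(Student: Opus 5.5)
We treat the two halves of \cref{surjectivity} separately. In both we use the natural map $Z_1(G_M(n;r))\to H_1(M)$, which sends an edge-cycle to the class of the corresponding piecewise-geodesic loop; this is well defined once $r$ is below the injectivity radius of $M$. Throughout we cover $M$ by finitely many geodesically convex charts of size $\delta$, where $\delta$ is fixed (independent of $n$) and small enough that each chart is $(1+\epsilon)$-bi-Lipschitz to a Euclidean ball. Inside each chart the point process, after rescaling by $r$, is a Boolean model on $\R^d$ of intensity roughly $\lam$, up to a small distortion. Thus the standard supercritical and subcritical estimates of continuum percolation transfer to $M$, provided we allow ourselves to replace $\lam$ by a slightly smaller or slightly larger intensity $\lam'$ that is still on the same side of $\lam_c$.

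\textbf{Supercritical case.} Since $H_1(M)$ is finitely generated, we fix finitely many smooth closed loops $\gamma_1,\dots,\gamma_m$ whose classes generate $H_1(M)$. We thicken each loop to a tube $T_i$ of fixed radius $\rho$ that deformation retracts onto $\gamma_i$; this is possible because $\rho$ is independent of $n$. We then cut $\gamma_i$ into boundedly many arcs, each contained in a single chart, so that each $T_i$ becomes a finite chain of overlapping convex cylinders of length and width of order $\rho$. In rescaled coordinates every cylinder is a box of side of order $\rho/r\to\infty$.

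The key input is the standard supercritical crossing estimate for the Boolean model: with probability tending to $1$, a box of side $L$ contains a unique large cluster crossing it in every direction, and the large clusters of overlapping boxes are connected to one another (the "good box" uniqueness argument of Penrose, or Meester--Roy). Gluing the boxes cyclically around $T_i$ gives a closed edge-path $c_i$ in $G_M(n;r)$ lying inside $T_i$ that winds once along $\gamma_i$. Its image in $T_i\simeq S^1$ is then homotopic to $\gamma_i$, so $[c_i]=[\gamma_i]$ in $H_1(M)$. A union bound over the finitely many boxes gives surjectivity with probability tending to $1$. In fact the failure probability is $\exp(-c\rho/r)$, which is better than we need.

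The main obstacle is the winding step. We must ensure that the path, once closed up, actually goes once around the tube rather than backtracking. We handle this by choosing the cyclic sequence of boxes along $\gamma_i$ and, in each overlap, concatenating crossing paths that stay inside the union of two consecutive boxes. The resulting loop then projects under the retraction $T_i\to\gamma_i$ to a loop of degree $1$, because each segment maps into a short arc and consecutive arcs advance monotonically.

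\textbf{Subcritical case.} For $\lam<\lam_c$, the subcritical Boolean model has exponentially decaying cluster diameter: $\Prob(\text{the cluster of a point has diameter}>tr)\le e^{-ct}$, uniformly in $\lam\le\lam_0$. Applying this to each of the $n$ points via the Mecke formula on the chart cover, every component of $G_M(n;r)$ has metric diameter at most $Cr|\log r|\to0$ with probability tending to $1$. Every cycle $z$ is a sum of cycles supported in single components, so each summand has support in a geodesic ball of radius less than the convexity radius of $M$. Such a ball is contractible, so each summand maps to $0$ in $H_1(M)$, and hence the map is the zero map.
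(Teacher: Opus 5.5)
Your subcritical half is essentially the paper's argument: exponential decay of cluster size, transferred to charts by monotone comparison, then contractibility of small balls. The one thing you omit is de-Poissonization. That step is routine here, because your events are monotone and your error bounds beat $n^{-1/2}$. Your supercritical half takes a genuinely different route. You build one explicit loop in a thin tube around each generator, whereas the paper proves a global good event $A_{Q,\rho}(n,r)$ and then pushes an arbitrary finely subdivided representative into the giant with \Cref{approximate 1-chains}. Your route can work, but as written it has a gap at its key input.

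\textbf{The gap.} Replacing $\lambda$ by $\lambda'$ rests on the thinning coupling, under which the chart picture of the graph contains a homogeneous $\lambda'$-Boolean graph. That coupling transfers \emph{increasing} events only.
\begin{itemize}
\item Existence of a crossing is increasing.
\item ``Unique crossing cluster'' and ``the large clusters of overlapping boxes are connected inside their union'' are not increasing, since extra points can create a second, separate crossing cluster.
\end{itemize}
So domination says nothing about uniqueness for the distorted, inhomogeneous chart process. Nor can you work inside the dominated homogeneous subgraph, because the thinnings of different charts disagree on chart overlaps. For $d\ge 3$ this connection step is exactly what closes your loop, so the supercritical argument is incomplete.

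\textbf{Possible repairs.}
\begin{itemize}
\item Import an inhomogeneous uniqueness/Antal--Pisztora input, as the paper does via Lemmas 14 and 24 of \cite{DGM26}.
\item Alternatively, note that the one event you need, ``some closed edge path in $T_i$ is homotopic in $T_i$ to $\gamma_i$'', is increasing, and compare the whole tube at once rather than chart by chart. Fermi coordinates along an embedded $\gamma_i$, with a parallel normal frame, give a thin tube metric and volume distortion at most $e^{\pm\eta}$ relative to a flat, possibly twisted, cylinder. A homogeneous supercritical process on that cylinder is dominated, and the homogeneous box estimates you quote apply to it verbatim.
\item For $d=2$ you can skip uniqueness altogether. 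If two $d_M$-geodesic edges of length $\le r$ meet, the triangle inequality puts two of their endpoints within distance $r$. Hence a transverse crossing of each overlap links consecutive lengthwise crossings.
\end{itemize}
You should also state that the $\gamma_i$ are chosen embedded (by general position for $d\ge 3$, as simple closed curves on surfaces), since your tube needs this.

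\textbf{Comparison once repaired.} Your argument then gives surjectivity with failure probability $e^{-c/r}$ from finitely many boxes. The paper's route costs a global event and yields only $r^{\kappa}$ bounds. In exchange, its approximation lemma also produces the $2$-chain $\sigma$ with $\partial\sigma=c'-c$ supported near $|c|$, which is what the kernel computation in \Cref{induction} relies on.
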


In addition to providing the first step
towards recovering $H_1(M)$,
\cref{surjectivity}
has a relatively clear geometric meaning.
Namely, when $\lam > \lam_c$,
``giant $1$-cycles appear'',
that is, $G_M(n;r)$
is connected enough to ``wrap around''
the whole manifold, and therefore
has cycles which represent
the ``macroscopic'' cycles of the
manifold.
The second part of \cref{surjectivity}
is not necessary to prove \cref{isomorphism},
but it does provide the complementary
result that when $\lam < \lam_c$,
$G_M(n;r)$ does not admit any cycles
large enough to wrap around a ``true'' $1$-dimensional hole in $M$.
\cref{surjectivity} is 
a generalization
of an analogous result
showed by Bobrowski and Skraba
on the flat $d$-torus \cite{BS2020}.
In the terminology of \cite{BS2020}, this shows
that compact Riemannian manifolds admit
a $1$-dimensional homological percolation threshold,
and that this threshold 
coincides with the threshold for Poisson Boolean continuum percolation on $\R^d$;
notably, the threshold only depends on the manifold $M$
through its dimension.

\cref{isomorphism} will 
follow from \Cref{surjectivity} and the
First Isomorphism Theorem
once we establish
that the kernel of the
natural map $Z_1(G_M(n;r)) \to H_1(M)$
is exactly
$Z_1^{Q' r|\log r|}(G_M(n;r))$.
When $r>0$ is sufficiently small
compared to the injectivity radius of 
$M$, we will automatically
have that $Z_1^{Q' r|\log r|}(G_M(n;r))$
is \emph{contained} in the kernel of $Z_1(G_M(n;r)) \to H_1(M)$.
Thus, the real hard work of the theorem
is to show the other inclusion,
that is, that any element of 
the kernel of the natural map
is generated by $Q'r|\log r|$-small 
cycles.

Our last main theorem
tells us that
$O(r |\log r|)$
is the ``right''
scale at which to quotient out cycles in the thermodynamic regime,
in the sense that
quotienting out
by $Z^{o(r|\log r|)}_1(G^n_M(r))$
is not sufficient:
\begin{thm} \label[thm]{need at least rlogr}
    Let $0< \lowerlambda \le \upperlambda < \infty$.
    Then there
    exists $q(\lowerlambda,\upperlambda, d) > 0$
    such that,
    if $\lowerlambda \le nr^d \le \upperlambda$, then
    \[
        \Prob(
        \exists z \in \ker(Z_1(G_M(n;r)) \to H_1(M)) \setminus Z_1^{q r|\log r|}(G_M(n;r)))
        \to 1
    \]
    as $r \to 0$.
    Moreover,
    if $\lowerlambda >\lam_c$,
    $z$ above can be chosen
    to lie in the largest component $\mathring{G}_M(n;r)$ of $G_M(n;r)$.
    In particular, in this case we have
    \[
        \Prob\left(\begin{array}{c} Z_1(G_M(n;r))/Z_1^{qr|\log r|}(G_M(n;r))
        \not \cong H_1(M) \mbox{ and } \\ Z_1(\mathring{G}_M(n;r))/Z_1^{qr|\log r|}(\mathring{G}_M(n;r))
        \not \cong H_1(M)
        \end{array}\right)
        \to 1
    \]
    as $r \to 0$.
\end{thm}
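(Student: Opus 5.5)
We exhibit, with high probability, a null-homologous cycle in $G_M(n;r)$ of metric diameter at least $qr|\log r|$ that cannot be written as a sum of cycles of diameter at most $qr|\log r|$. The natural candidate is the boundary of a large ``hole'': a region of radius $\rho = c\, r|\log r|$ containing no sample points, encircled by an annulus in which the points form a ring-shaped connected cycle.

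\textbf{Step 1: construct the configuration.} Fix a small constant $c>0$ to be chosen later. Tile $M$ by roughly $r^{-d}(r|\log r|)^{-d}$-many disjoint geodesic balls $B_i$ of radius $3\rho$, with $\rho\ll \mathrm{inj}(M)$, so that each $B_i$ is nearly Euclidean. Call $B_i$ \emph{good} if the following hold.
\begin{itemize}
\item[(a)] The concentric ball of radius $\rho$ contains no points.
\item[(b)] Inside a thin tubular neighbourhood of a fixed great circle of the sphere of radius $2\rho$ (in a fixed $2$-plane, using normal coordinates), there is a chain of points $x_1,\dots,x_m$ with $m\asymp \rho/r = c|\log r|$, consecutive points at distance $\le r$, closing into a cycle $\gamma$ that winds once around the empty ball.
\end{itemize}
For $d\ge3$, a loop around a ball is contractible in the complement of the ball, so the obstruction has to be rethought; see Step 2.

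Both (a) and (b) are local events. Their probability is at least $\exp(-C\lambda\rho^d r^{-d})\cdot p^{m}$ for some $p=p(\lowerlambda,\upperlambda,d)>0$. Since $\rho^d r^{-d}= c^d|\log r|^d$, this is too small when $d\ge2$: emptiness of a ball of radius $c\,r|\log r|$ costs $r^{c'|\log r|^{d-1}}$. So the hole must be thin rather than round. The fix is to require only that a \emph{disk-like slab} is empty: in $d=2$, an annular strip; in general, a flat $2$-disk thickened by $r$ in the normal directions, of radius $\rho$ in the disk directions. Its volume is $\asymp \rho^2 r^{d-2}$, which is still superlogarithmic in units of $r^d$.

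So instead I use a purely one-dimensional configuration: $\gamma$ is a long thin loop, and no short cycles fill it. Concretely, condition (a) becomes: the $r$-neighbourhood of the chain contains no other points, i.e. $\gamma$ is an isolated induced cycle component. This costs $p^{m}$ with volume $\asymp m r^d$, hence probability $\ge r^{c''c}$. Choosing $c$ small makes this $\ge r^{d/2}$. Then the number of good balls is binomial-like with mean $\gtrsim r^{-d/2}|\log r|^{-d}\to\infty$, and independence across disjoint balls (via Poissonization and de-Poissonization, or negative-dependence arguments) gives at least one good ball with probability tending to $1$.

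\textbf{Step 2: $\gamma$ is in the kernel but not in $Z_1^{qr|\log r|}$.} It lies in the kernel since $|\gamma|$ sits in a small geodesic ball, which is contractible. Because $\gamma$ is an entire connected component that is a simple cycle, the cycle space of that component is spanned by $\gamma$ alone. Any expression $\gamma=\sum a_j z_j$ with $\diam|z_j|\le s$ projects, by restricting to edges of the component, to an expression of $\gamma$ as a sum of cycles in that component with diameter $\le s$. Those cycles are multiples of $\gamma$, and $\gamma$ has diameter $\asymp\rho$. So with $q<c$ (using diameter $\ge \rho/2$, say), $\gamma\notin Z_1^{qr|\log r|}$. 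This argument works for all $d$ and all $\lam$, which gives the first claim.

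\textbf{Step 3: the giant-component claim.} This is the main obstacle. The loop must now be attached to $\mathring{G}_M(n;r)$ while still having no small fillings. Plan: demand a single ``stem'', a path of $O(1)$ points connecting one vertex of $\gamma$ to the outside of $B_i$, with the rest of $\gamma$'s neighbourhood still empty. Since the stem is a bridge, $\gamma$ remains the only cycle through its non-stem edges, and the edge-restriction argument of Step 2 still applies: project onto the edge set of $\gamma$ minus the attachment point. Connecting the stem to the giant component is a supercritical event whose probability is bounded below uniformly, by Theorem~\ref{surjectivity}-type crossing estimates. This costs only a constant factor, so many good balls still exist. Finally, a nonzero kernel element outside $Z_1^{qr|\log r|}$ rules out the isomorphism, combined with surjectivity from Theorem~\ref{surjectivity} and the containment of small cycles in the kernel. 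Surjectivity plus containment give $Z_1/Z_1^{s}\surj H_1(M)$ with nontrivial kernel. For infinitely generated coefficient issues one should argue via this non-injectivity rather than abstract isomorphism, which may need care, e.g. rank counting over a field.
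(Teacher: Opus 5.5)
Your Steps 1--2 are fine, and they reach the first claim by a cleaner route than the paper. The paper uses an occupied-boxes-plus-empty-shell event and then needs the topological argument of \Cref{topology}, mapping into $H_1(\R^d\setminus\mathcal{N}')$, to rule out decompositions by small cycles that may leave the local patch. You instead require every vertex of $\gamma$ except at most one to have degree $2$ in the whole graph. That forces every cycle to have constant coefficient along $\gamma$, which excludes all cycles of diameter $<\diam|\gamma|$ globally and over any coefficient ring. Two details: the number of balls is $\asymp(r|\log r|)^{-d}$, and you must space exactly one point per box so the cycle stays chord-free despite the $e^{\pm\eta}$ distortion.

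The genuine gap is Step 3. You assert that connecting the stem to $\mathring{G}_M(n;r)$ ``is a supercritical event whose probability is bounded below uniformly, so many good balls still exist.'' This fails on three counts.
\begin{itemize}
\item Membership in the giant is a global event, so per-ball lower bounds give no independence across balls. You need at least one success with probability $1-o(n^{-1/2})$ in order to de-Poissonize.
\item Your local event contains a decreasing requirement (emptiness of the $r$-neighbourhood of $\gamma$ away from the stem), while the connection event is increasing. By Harris--FKG they are negatively correlated, so you cannot multiply probabilities. The connecting path must also avoid the emptied region, so bare supercriticality does not by itself give a uniform lower bound. The surjectivity theorem says nothing about such restricted connection probabilities.
\item Once you localize the connection event, as you must for independence, you need a separate argument that the component reached is the giant.
\end{itemize}

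The paper's fix supplies exactly these ingredients.
\begin{itemize}
\item It asks that the bottom box at $z_0$ be joined to $B(z_0,Q\log R)^c$ by a path inside the half-ball $\mathcal{H}$ lying strictly below the loop configuration. This region is disjoint from the emptied shell, hence independent of it, and has size $O(Q\log R)$, hence is independent across separated translates.
\item It gets a uniform lower bound $c_0$ from percolation of the supercritical model in a half-space (\Cref{downward connection infinite}, via Antal--Pisztora renormalization and a Peierls argument).
\item It then uses \Cref{A_3 likely}: off probability $o(r^\kappa)$, only one component has diameter at least $(Q/2)r|\log r|$. So the loop lies in the giant.
\end{itemize}
This forces the local regions to have size $\asymp Qr|\log r|$ with $Q$ large, decoupled from the small loop scale $cr|\log r|$. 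Your balls of radius $3cr|\log r|$ are too small for this.

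Finally, the abstract non-isomorphism needs no hedging, and there are no infinitely-generated issues. Both $Z_1/Z_1^{qr|\log r|}$ and $H_1(M)$ are finitely generated. A surjective endomorphism of a finitely generated module over a commutative ring is injective (Vasconcelos). Hence a surjection onto $H_1(M)$ with nonzero kernel rules out any isomorphism. Rank counting, by contrast, only handles field coefficients.
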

Something to note is that \cref{need at least rlogr}
will give us another proof that, in the thermodynamic regime,
standard models of random geometric complexes will not have
first homology coinciding with $H_1(M)$, and in fact
are not much more useful for computing $H_1(M)$
than the random geometric graph $G_M(n;r)$; see \cref{sec:complexes} for a detailed discussion.
It is for this reason that most of the  of this paper
deals with the graph $G_M(n;r)$ instead of a geometric
random complex.

\subsection{Proof outline}

The proofs of \Cref{isomorphism} and  \Cref{surjectivity} proceed as follows. 
First, standard arguments (recalled in \Cref{sec:main proofs}) will
allow us to deduce the desired theorems
for $G_M(n;r)$
from 
their analogues in 
a random geometric graph model $G^n_M(r)$, whose vertex set is a \emph{Poisson point process} on $M$ with intensity measure $ndM$.
Hence we focus on showing the analogous theorems for $G_M^n(r)$.
In \Cref{sec:probabilistic input},
we start by establishing some facts for supercritical percolation on Riemannian manifolds: there is a unique giant component (denoted $\mathring{G}^n_M(r)$), any point of $M$ lies close to a vertex of the giant component, and any pair of points in the same connected component which are close in the Riemannian metric are connected by an edge path in the graph which is not too long.
This is established in a similar way as in recent work of Dubin, the author, and Michelen \cite{DGM26}; using local coordinates, we approximate our random geometric graph by a rescaling of a supercritical Poisson Boolean model on $\R^d$, and then use some results from (possibly inhomogeneous) supercritical Poisson Boolean percolation.

Next, we show that these properties imply an approximation lemma, \Cref{approximate 1-chains}. This lemma says that any $1$-chain $c$ in $M$ which has boundary lying in the giant component $\mathring{G}^n_M(r)$ of our random geometric graph can be approximated by a $1$-chain $c'$ in the giant component $\mathring{G}^n_M(r)$ which is not too far from the original chain; importantly, this lemma also provides a $2$-chain whose boundary is equal to $c'-c$.
This $2$-chain will ensure that in addition to our approximations being geometrically close, they have not changed the \emph{homology class} in $H_1(M)$.
\Cref{approximate 1-chains} applied to the special case that the boundary of $c$ is $0$ will then imply \Cref{surjectivity} almost immediately.

We then turn to showing that
the kernel of the natural map $Z_1(G^n_M(r)) \to H_1(M)$
is generated by cycles of $d_M$-diameter of order $O(r|\log r|)$.
The components besides the giant will
have diameter $O(r|\log r|)$, and so
it will suffice to consider cycles
which lie in the giant component $\mathring{G}^n_M(r)$.
This will be done in two steps:
first, we will show in \Cref{decomposition}
that, for sufficiently nice covers of $M$,
any $z \in Z_1(\mathring{G}^n_M(r)) \cap B_1(M)$ can be written as a sum
of elements of $Z_1(\mathring{G}^n_M(r)) \cap B_1(M)$ which each are supported on a single cover element; then, in \Cref{good cover exists}, we show that a nice cover with each element of diameter $O(r|\log r|)$ exists.

The proof of \Cref{decomposition} is the most technical part of the paper, and together with \Cref{approximate 1-chains}
is the paper's chief technical novelty. The basic idea is as follows:
given an element $U$ of the cover, we wish to inductively replace $z$
by some $z_U \in Z_1(\mathring{G}^n_M(r)) \cap B_1(M)$ 
which is supported entirely on $U$
and
such that $z-z_U$ lies in the union of the other cover elements.
To find $z_U$, we consider a $2$-chain $\sigma \in C_2(M)$ which has $z$ as its boundary, and then consider a related $2$-chain consisting only of the singular $2$-simplices of $\sigma$ which are supported entirely in $U$. The boundary of this related $2$-chain will be close to what we want for $z_U$, but not all of it will lie in $\mathring{G}^n_M(r)$; we rectify this by applying \Cref{approximate 1-chains}.
The technical difficulty comes from the need to keep track of the accumulated error from the approximation and ensure that our approximations do not exit the desired cover elements.

\Cref{sec:characterize kernel} assembles the referenced propositions above into a proof that the kernel of the natural map is indeed generated by small cycles.
\Cref{sec:main proofs} then fills in the rest of the details for obtaining \Cref{isomorphism} and \Cref{surjectivity},
including the ``de-Poissonization'' needed to pass from $G^n_M(r)$ to $G_M(n;r)$.

\Cref{sec: counterexample} proves \Cref{need at least rlogr},
which
is fairly direct; for any $c>0$,
for $q>0$ sufficiently small
we can ``by hand''
construct indecomposable
cycles $z$ with diameter $\ge qr|\log r|$
with probability $\ge r^c$
in a small slab; considering
many disjoint slabs and using independence
gives the desired result.
The 
na\"{i}ve construction
gives loops concentrated
on isolated small components,
but we 
will also
show that 
in the supercritical regime
we can embed
this loop into the giant.

\Cref{sec:complexes} addresses the question
of what happens if one considers a random geometric \emph{complex} rather than a random geometric \emph{graph} in this regime. It turns out that the picture is essentially the same; that is, in the thermodynamic regime, one can only recover $H_1(M)$ by quotienting out by cycles of diameter $\Theta(r|\log r|)$.

\Cref{sec: objections} gives two versions of \Cref{isomorphism} which do not make reference to the Riemannian metric $d_M$. First, one can recover $H_1(M)$ with high probability in the setting that $M$ is embedded in $\R^N$ and one only has access to a point cloud together with \emph{Euclidean} distances.
Second, one can recover $H_1(M)$ with high probability purely from the \emph{graph isomorphism type} of $G_M(n;r)$.

Lastly, \Cref{sec:future} discusses future directions and obstacles, especially in regards to higher-dimensional analogues of the main theorems.

\subsection{Related work}

The topic of inferring the topology of a manifold from a uniformly sampled point cloud is well studied.
Some fundamental work on learning homology in this setting was initiated by Niyogi, Smale and Weinberger in \cite{NSW2008}. 
In \cite{Kahle2011}, using discrete Morse theory, Kahle studied
the ``null hypothesis'' case of geometric random
complexes in $\R^d$ (which has vanishing
$k$\textsuperscript{th} homology for all $k \ge 1$),
and exhibited thresholds for both the appearance and vanishing of $k$\textsuperscript{th} homology
(both outside the thermodynamic regime).
In \cite{MR4329874} and \cite{LerarioMulas2021},
aspects of the \emph{local} topology of random geometric complexes in the thermodynamic regime were studied, including universal laws for normalized counts of homotopy and isotopy types of connected components.

As mentioned above, in \cite{Bobrowski22},
Bobrowski gave exact
thresholds for the property that the $k$\textsuperscript{th}
homology of the complex is isomorphic to $H_k(M)$ (again, in a regime denser than thermodynamic).
In \cite{BS2020},
Bobrowski and Skraba study homological percolation and the 
``emergence of giant $k$-cycles'' in 
the thermodynamic regime, in the flat torus case;
this work inspired \cref{surjectivity}.
We remark that establishing sharpness
of $k$-dimensional homological percolation thresholds remains open; see \cref{sec:future}
for a more detailed discussion.

\subsection{Acknowledgments}

The author thanks Omer Bobrowski and Primoz Skraba for introducing homological percolation to him and for conversations which helped in understanding it.
The author also thanks Marcus Michelen for giving
helpful comments on a preliminary version of this manuscript.

\section{Review of homology} \label{homology review}
Here we give a brief review of singular and simplicial homology
as they pertain to this paper.
A reader familiar with homology may want to skip \Cref{sec:homology defs},
but they should refer to  \Cref{sec:homology notation}, since that establishes important notation which will be used throughout the paper which is not entirely standard.

\subsection{Definitions} \label{sec:homology defs}
The $k$\textsuperscript{th}
\emph{homology group}
$H_k(X)$ of a space $X$
is meant to encode the ``$k$-dimensional holes'' of $X$.
Roughly, a ``$k$-dimensional hole'' should be a $k$-dimensional object in $X$ which has no boundary (is ``closed'')
and which is not the boundary of
a $(k+1)$-dimensional object in $X$. For instance, a $1$-dimensional hole is a closed loop in $X$ which does not bound a surface in $X$.

In our setting, we will be using two different types of homology: \emph{simplicial} homology of a graph (a combinatorial object), and \emph{singular} homology of a manifold (a continuous object).

First, we describe simplicial homology of a graph.
Fix a commutative ring $\mathcal{S}$.\footnote{All of our proofs will work equally well for any choice of $\mathcal{S}$. If the reader does not wish to think
of modules over arbitrary rings, they can either
set $\mathcal{S} = \Z$, in which case $\mathcal{S}$-modules
are exactly the same as abelian groups,
or they can set $\mathcal{S}$ to be their favorite field ($\R, \mathbb{C}, \mathbb{Q}$, etc.), and then modules are
precisely vector spaces over that field.}
Given a graph $G = (V,E)$, let $C_0(G)$ be the
set of formal linear combinations of elements of $V$
with coefficients in $\mathcal{S}$,
where only finitely many of the cofficients are nonzero.
(In other words, $C_0(G)$ is the free $\mathcal{S}$-module
generated by $V$).
Also fix an arbitrary orientation for each edge,
so we can think of each edge $e \in E$
as an \emph{ordered} pair $e = (v_0,v_1)$.
Let $C_1(G)$ similarly be the free $\mathcal{S}$-module
generated by $E$.
We define $C_k(G)=0$ to be the zero module for
any $k \ge 2$.\footnote{Because we only consider \emph{graphs} here,
there is no $k$-dimensional structure for $k \ge 2$.
We could also consider arbitrary \emph{simplicial complexes},
in which case $C_k(G)$ is nonzero in general.
However, for the questions considered in this paper,
this does not end up making much of a difference;
see \Cref{sec:complexes}.}

We then define \emph{boundary maps} $\partial:C_k(G) \to C_{k-1}(G)$
as follows. If $k \ne 1$, $\partial$ is the zero map.
If $k=1$, define $\partial:C_1(G) \to C_0(G)$
to be the unique $\mathcal{S}$-linear map
satisfying $\partial(v_0,v_1) = v_1 - v_0$.
That is, the boundary of an edge $e$ is the formal \emph{difference} of its boundary vertices.

The set of \emph{$k$-cycles} is
$Z_k(G) := \ker(\partial:C_k(G) \to C_{k-1}(G))$.
In our case, $Z_1(G)$ is precisely the \emph{cycle space} of the graph, and its nontrivial elements are (formal sums of) cycles
in the usual graph theoretic sense.
The set of \emph{$k$-boundaries}
is $B_k(G) := \im(\partial:C_{k+1}(G) \to C_k(G))$.
In our current setting, $B_k(G) = 0$ for all $k \ge 1$,
but the analogous continuous concept will be nontrivial.

The \emph{$k$\textsuperscript{th} homology group} $H_k(G)$
of $G$ is $Z_k(G)/B_k(G)$. In our current case,
$H_1(G) \cong Z_1(G)$ 
coincides with the cycle space, since $B_1(G) = 0$. $H_0(G) \cong \mathcal{S}^{\oplus m}$, where $m$ is the number of connected components of $G$.

Next, we define the \emph{singular homology} of 
the manifold $M$. 
First, let $\Delta_k$ be the \emph{standard $k$-simplex},
which we can take to be the convex hull
of the standard basis vectors of $\R^{k+1}$.
Note that $\Delta_0$ is a point, $\Delta_1$ is a segment,
and $\Delta_2$ is a triangle.
Note also that for each $k$,
there are $k+1$ natural embeddings
$\iota_{k,i}:\Delta_{k-1} \to \Delta_{k}$, $0\le i \le k$
corresponding to the $k+1$ $(k-1)$-dimensional faces
of $\Delta_k$.\footnote{For a more detailed description of these maps, see Section 2.1 of Hatcher \cite{Hatcher}.}

Given a manifold (or general topological space) $M$,
a \emph{singular $k$-simplex} is a continuous map $s:\Delta_k \to M$.
For each $k \ge 0$, the set of \emph{singular $k$-chains}
$C_0(M)$ is the free $\mathcal{S}$-module
generated by all singular $k$-simplices in $M$.
Note that composing a singular $k$-simplex $s:\Delta_k \to M$
with an embedding $\Delta_{k-1} \to \Delta_k$
gives a singular $(k-1)$-simplex, and thus
we can define a natural boundary map $\partial:C_k(M) \to C_{k-1}(M)$
by taking $\partial$ to be the unique $\mathcal{S}$-linear map
such that for each singular $k$-simplex $s:\Delta_k \to M$,
we have
\[
    \partial s = \sum_{i=0}^k (-1)^{i}s \circ \iota_{k,i}
    \in C_{k-1}(M).
\]
In particular, if $s \in C_1(M)$
is a path from $x$ to $y$ in $M$,
$\partial s = y - x$.
The boundary of a singular $2$-simplex $s \in C_2(M)$
looks like a signed sum of $1$-simplices
representing the edges in the boundary of the triangle $s$.
The signs in the definition of the boundary map
are chosen so that if we have a $2$-chain representing
an oriented \emph{surface}, a cancellation ensures that the boundary of the sum matches the boundary of the surface as usually geometrically understood.

Again we define the \emph{$k$-cycles} and \emph{$k$-boundaries} by
\[
    Z_k(M) := \ker(\partial:C_k(M) \to C_{k-1}(M)),
    \qquad
    B_k(M) := \im (\partial:C_{k+1} \to C_k(M)).
\]
One can check that for every $k$ we have that
$\partial^2:C_{k+1}(M) \to C_{k-1}(M)$ is the zero map,
and therefore $B_k(M)$ is a submodule of $Z_k(M)$. Therefore,
we can define the \emph{$k$\textsuperscript{th} singular
homology group} of $M$ to be $H_k(M) := Z_k(M)/B_k(M)$.
Since in this article we will only be concerned with $H_1(M)$,
we will not encounter $C_k(M)$ for $k \ge 3$.

If $X$ is \emph{deformation retract} of $Y$ or more generally is \emph{homotopy equivalent} to $Y$, then $H_k(X) \cong H_k(Y)$
for all $k \ge 0$; see Hatcher \cite{Hatcher} for details on these concepts.
The key special case that we will
use repeatedly is that if a space $X$ is \emph{contractible}
(deformation retracts to a \emph{point}),
then $H_k(X)=0$ for all $k \ge 1$.
In particular, then $Z_1(X) = B_1(X)$,
and so for any $1$-cycle $z \in Z_1(X)$, there is some $2$-chain $\sigma \in C_2(X)$ with $\partial \sigma = z$.
Any set homeomorphic to a convex set is necessarily contractible.

\subsection{Notation} \label{sec:homology notation}
Recall that we denote
our ring of coefficients
by $\mathcal{S}$.
For $c \in C_k(M)$ and $s$ a singular $k$-simplex, we will write
$t_s(c) \in \mathcal{S}$ for the coefficient of $s$ in $c$;
thus we have $c = \sum_{s} t_s(c) s$ for any $c \in C_k(M)$.
We also write $s \in c$ if $t_s(c) \ne 0$.

For a singular $k$-simplex $s: \Delta_k \to M$,
we write $|s| \subseteq M$ for the image of $s$ in $M$,
also called the \emph{support} of $s$.
For a $k$-chain $c \in C_k(M)$, we define
\[
    |c| = \bigcup_{s \in c} |s|.
\]

Suppose that $G$ is a graph 
whose vertices are points of $M$.
To each $1$-simplex (edge) $e = (v,w)$ of $G$,
we naturally associate a singular $1$-simplex
$s:\Delta_1 \to M$ by taking $s(e_0) = v, s(e_1) = w$,
and $s$ a geodesic in $M$.
If the distance from $v$ to $w$ is smaller than the global injectivity radius $\mathrm{inj}(M)$
of $M$, there is a unique choice of such a geodesic.
In this way, we get natural inclusions
\[
    C_k(G)  
    \hookrightarrow C_k(M),
    \qquad
    Z_k(G)  
    \hookrightarrow Z_k(M).
\]
We will therefore simply identify $1$-chains and $1$-cycles of $G$
with their images in $C_k(M)$ and $Z_k(M)$ respectively.
(Note, however, that the corresponding maps 
on \emph{homology} will not be injective.)

\section{Some facts about supercritical percolation on Riemannian manifolds}
\label{sec:probabilistic input}

Here we provide the probabilistic input needed to prove \cref{isomorphism}
and \cref{surjectivity}.
Although our main theorems are stated for a random geometric graph $G_M(n;r)$
obtained by sampling $n$ independent points from $M$, for our purposes here it will be easier to deal with a random random geometric graph whose vertex set comes from a \emph{Poisson process}.
We will relate this back to $G_M(n;r)$
in \Cref{sec:main proofs}.

We denote by $d_M$ the metric (distance function) on $M$
induced by its Riemannian metric,
and we denote by $dM$ the Riemannian volume measure on $M$,
which we assume has total mass $1$.\footnote{
This assumption is a technical convenience.
If $\mathrm{vol}(M) \ne 1$, we can rescale the
Riemannian metric by a factor of $\mathrm{vol}(M)^{1/d}$
to obtain a manifold with volume $1$,
and hence we can recover our main results
for arbitrary volume by taking
$\lam = nr^d/\vol(M)$ instead of $\lam = nr^d$.
If one wants to treat the geometry of the manifold as completely
unknown, one can instead state the theorems 
in terms of the expected average
degree $\Delta$
of $G_M(n;r)$,
which is equal to 
$(1 + o(1)) \omega_d n r^d/\vol(M) = (1 + o(1)) \omega_d \lam$
(where here $\omega_d$ is the volume of the unit ball in $\R^d$).
One can deduce from work of Bonnet and Gusakova \cite{bonnet2024concentration}
that the average degree of $G_M(n;r)$ concentrates about its expectation, (see Lemma 41 of \cite{DGM26}
for a statement explicitly in terms of $\Delta$) so the parameter $\Delta = \lam \omega_d$
can be inferred up to small error with high probability from
a sample of $G_M(n;r)$.
}
For $n,r > 0$, we let $G^n_M(r)$
be the random geometric graph
whose vertices are given by
a Poisson point process with
intensity measure $n dM$ on $M$,
and whose edges are pairs of
vertices with $d_M$-distance at most $r$.
We denote by $\mathring{G}^n_M(r)$
the largest component of $G^n_M(r)$
(in metric diameter).
We denote by $d_G$ the usual \emph{graph distance}
on $G^n_M(r)$; that is, for vertices $v,w \in G^n_M(r)$,
$d_G(v,w)$ is the minimal number of edges of $G^n_M(r)$
one must cross to travel from $v$ to $w$.

For $1 \le Q, \rho < \infty$, define the following events:
\begin{align*}
    A^1_{Q, \rho}(n,r) &:=
    \left\{
        \begin{array}{c}
            \forall p \in M, \exists \mathring{p} \in \mathring{G}^n_M(r)
            \mbox{ such that } \\ d_M(p, \mathring{p}) \le Q r |\log r|
        \end{array}
    \right\} \\
    A^2_{Q, \rho}(n,r) 
    &:=
    \left\{
        \begin{array}{c}
            \forall p,q \in G^n_M(r) \mbox{ with } d_M(p,q) \le 2Qr| \log r|^2 + 2r, \\ \mbox{ if } p 
            \mbox{ and } q \mbox{ lie in one component of } G^n_M(r) \mbox{, then }\\
             d_G(p,q) \le \rho \max( r^{-1}d_M(p,q), Q |\log r |)
        \end{array}
    \right\} \\
    A^3_{Q, \rho}(n,r)
    &:=\left\{
        \begin{array}{c}
            \mbox{every component of } G^n_M(r) \mbox{ besides} \\
            \mathring{G}^n_M(r) \mbox{ has } d_M\mbox{-diameter} \\
            \mbox{ at most } Qr|\log r| 
        \end{array}
    \right\},
\end{align*}
and define
\begin{equation} \label{eq:key event}
A_{Q, \rho}(n,r) = \bigcap_{i=1}^3 A^i_{Q, \rho}(n,r).
\end{equation}

Later, using geometric arguments,
we will show that the conclusions
of our main theorems hold
on $A_{Q,\rho}(n,r)$. Thus
we want to show that, in the supercritical regime, the event $A_{Q,\rho}(n,r)$ is likely for some choices of $Q,\rho$.
For technical reasons, let us also define
\begin{align*}
    {A^1}'_{Q, \rho}(n,r)
    :=
    \left\{
        \begin{array}{c}
            \forall p \in M, \exists \mathring{p} \in G^n_M(r)
            \mbox{ with }  d_M(p, \mathring{p}) \le Q r |\log r| \\
            \mbox{such that the connected component of }\mathring{p} \\
            \mbox{has } d_M \mbox{-diameter at least } 2Qr|\log r|
        \end{array}
    \right\},
\end{align*}
and note that
${A^1}'_{Q, \rho}(n,r) \cap A^2_{Q, \rho}(n,r) \cap A^3_{Q, \rho}(n,r)
\subseteq A_{Q,\rho}(n,r)$. We will show that each event in this intersection has high probability for some choice of $Q,\rho$ in the supercritical regime.

\begin{prop} \label[prop]{A_3 likely}
    Let $\kappa > 0$, $\lam_0 > \lam_c(d)$.
    Then there exist 
    $1 \le Q(\kappa,\lam_0,d), 
    \rho(\kappa,\lam_0,d) < \infty$,
    and $r_0(\kappa,\lam_0,M)>0$
    such that whenever
    $n r^d \ge \lam_0$ and $0<r \le r_0$, we have
    \[
        \Prob(A^3_{Q, \rho}(n;r))
        \ge 1 - r^\kappa.
    \]
\end{prop}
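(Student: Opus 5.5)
Since $A^3_{Q,\rho}(n,r)$ does not involve $\rho$, it suffices to find $Q$ such that, with probability at least $1-r^\kappa$, every component of $G^n_M(r)$ other than $\mathring{G}^n_M(r)$ has $d_M$-diameter at most $Qr|\log r|$. The plan is the standard one: work locally in charts, rescale by $r^{-1}$ to compare with a supercritical Poisson Boolean model on $\R^d$, and use the exponential decay of the radius of finite clusters in supercritical continuum percolation together with uniqueness of the large cluster. Since $G^n_M(r)$ is monotone in $n$ for fixed $r$ (coupling Poisson processes), we may first reduce to intensities close to $\lam_0$ (and then handle larger $\lam$ the same way, with uniform constants), or simply note that all estimates below are uniform in $\lam\ge\lam_0$.

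\textbf{Step 1 (local comparison).} Cover $M$ by $O(r^{-d})$ balls $B_i$ of radius $r$, with centers $p_i$. For each $i$, consider the ball $B(p_i, 3Qr|\log r|)$. For $r$ small this lies inside a normal coordinate chart, where the metric is $(1+O(r|\log r|))$-close to Euclidean. Rescaling by $r^{-1}$, the Poisson process restricted to this ball is a Poisson process of intensity $\lam(1+o(1))$ in a Euclidean ball of radius $3Q|\log r|$, and connections at distance $r$ become connections at distance $1\pm o(1)$. By sandwiching (thin the process and shrink the connection radius slightly), we dominate from below by a homogeneous Boolean model with parameter $\lam'\in(\lam_c,\lam_0)$ whose edges are edges of $G^n_M(r)$. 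This is the argument used in \cite{DGM26}.

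\textbf{Step 2 (the bad event in a box).} Within the rescaled box $\Lambda$ of side $L=3Q|\log r|$, supercritical Boolean percolation satisfies the following. Except with probability $\le C e^{-cL}$, there is a unique cluster of $\Lambda$ crossing the box. Moreover, every cluster inside $\Lambda$ of diameter $\ge L/3$ meeting the central subbox is connected, within $\Lambda$, to this crossing cluster. These are the standard ``uniqueness of crossing cluster'' and finite-cluster-diameter exponential decay estimates (Penrose–Pisztora type, valid for Boolean models in $d\ge2$, including $d=2$ via RSW-type arguments). Since edges in the dominated model are edges of $G$, failure for $G$ inside the ball implies failure of a monotone-decreasing or crossing event that we can control. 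For uniqueness we need care: the event ``two disjoint large clusters'' is not monotone. So I would instead apply the Penrose–Pisztora estimate directly to the inhomogeneous process, which is allowed because its intensity and radius are uniformly close to constants. Choosing $Q$ large makes $Ce^{-cL}=C r^{3cQ}\le r^{\kappa+d+1}$.

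\textbf{Step 3 (globalize).} Take a union bound over the $O(r^{-d})$ balls, giving total failure probability $\le r^\kappa$ for small $r$. On the good event, we patch the local giant clusters together. Overlapping balls share a big crossing cluster, so by local uniqueness their crossing clusters coincide, and connectedness of $M$ yields a single component containing all of them. That component must be $\mathring{G}^n_M(r)$, since any component of diameter $>Qr|\log r|$ meets some central subbox, has diameter $\ge L/3$ there, and so joins the local crossing cluster. Hence every other component has diameter $\le Qr|\log r|$.

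\textbf{Main obstacle.} The hard step is Step 2: importing uniform-in-parameters Penrose–Pisztora uniqueness bounds for a slightly inhomogeneous, slightly non-Euclidean Boolean model. The non-monotonicity of the uniqueness event prevents a naive stochastic domination. I would first try to use the inhomogeneous supercritical results cited via \cite{DGM26}.
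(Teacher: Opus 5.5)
\textbf{There is a genuine gap, in Step 2, and it is the step you already flag.} Your skeleton matches the shape of the bound the paper uses: a local estimate in rescaled normal coordinates at scale $Q|\log r|$ failing with probability $O(r^{cQ})$, then a union bound over $O(r^{-d})$ centres, then patching overlapping crossing clusters. Steps~1 and~3 are sound up to constants. In Step~3, pick $v,w$ in the component with $d_M(v,w)>Qr|\log r|$ and work in the box at the centre nearest $v$; then the cluster of $v$ inside that box has diameter at least roughly $L/3$. The paper, however, rebuilds none of this. Its proof is a single citation: Lemma~14 of \cite{DGM26} gives $\Prob(A^3_{Q,\rho}(n,r))\ge 1-r^{-d}e^{cQ\log r}$ for all $nr^d\ge\lam_0$, with $c=c(\lam_0)$ independent of $Q$, and one then takes $Q>(d+\kappa)/c$. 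Neither justification you offer for Step~2 works.

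\textbf{Why the two justifications fail.} First, reducing to $nr^d$ near $\lam_0$ by monotonicity in $n$ is invalid. $A^3$ is not an increasing event: a single added point can join two medium-sized non-giant clusters into one of diameter $>Qr|\log r|$. So the coupling in $n$ transfers nothing, and the proposition genuinely requires uniformity over all $nr^d\ge\lam_0$, including $nr^d\to\infty$.

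Second, Penrose--Pisztora is a theorem about the homogeneous Euclidean Boolean model at fixed intensity. Knowing that the local intensity and connection radius are within a factor $1+o(1)$ of constants does not let you ``apply it directly'' to the perturbed process. The reason is the one you yourself give against sandwiching: the event that some cluster other than the crossing one has diameter at least $L/3$ is not monotone.

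\textbf{What is actually needed.} You need a uniqueness/finite-cluster-diameter estimate proved for the inhomogeneous, non-Euclidean model itself, with constants depending only on $\lam_0$ and $d$. The \cite{DGM26} inputs used elsewhere in this paper do not supply it. \Cref{lem:identify-G-S} only identifies the local model. \Cref{non-uniform-Antal-Pisztora} bounds graph distance between points already known to lie in one component, and says nothing about a second large cluster. The missing ingredient is exactly Lemma~14 of \cite{DGM26}; citing it closes the gap and makes your Steps~1--3 unnecessary.
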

\begin{proof}
By Lemma 14 in \cite{DGM26},
there is $c(\lam_0)$ 
not depending on $Q$ such that
$\Prob(A^3_{Q, \rho}(n,r)) \ge 1 - r^{-d}e^{cQ\log r}$,
and so choosing $Q > (d+\kappa)/c$
gives the desired result.
\end{proof}

The proofs that events ${A^1}'_{Q, \rho}(n,r)$ and $A^2_{Q, \rho}(n,r)$
happen with high probability are very similar to
those contained in Section 4 of \cite{DGM26}.
Because the statements are not quite identical, we briefly give the proofs here.

The general strategy
is as follows:
The relevant events are implied by ``local'' versions which hold on an open cover of $M$.
Near any point, at a sufficiently small scale, the geometry of the manifold is approximately flat and the volume form is approximately equal to Lebesgue measure.
Therefore, locally, $G^n_M(r)$
dominates a supercritical percolation process on a patch of $\R^d$;
we then use results from continuum percolation on $\R^d$.

To facilitate this comparison, we introduce some notation.
Let $U=B_M(z,\delta) \subseteq M$ be a metric ball in $M$,
and let $u:U \to B(0,\delta)$ be geodesic normal coordinates at $z \in M$.
We have the following geometric lemma:
\begin{lemma}[Microscopic balls on $M$ are almost flat, see \cite{DGM26}] \label[lemma]{lem:almostflat}
    For each $z \in M$ and $\eta > 0$ there exists $\delta_0 \in (0,1)$
	such that for every $0<\delta \le \delta_0$,
	any ball $U = B_M(z,\delta)$ in $M$ of radius $\delta$ has the following properties. 
	Let
	\[
	u : U \to B_{\R^d}(0,\delta)
	\]
	be geodesic normal coordinates. 
	Then $u$
	is a diffeomorphism with
	\[
	e^{-\eta} d_M(x,y) 
	\le d_{\R^d}( u(x), u(y) )
	\le e^{\eta}d_M(x,y)
	\]
	for all $x,y \in U$.
	Moreover if $\mathcal{L}$ is the Lebesgue measure on $\R^d$ then the Radon-Nikodym derivative of the pushforward $u_\ast dM$ of the Riemannian volume measure $dM$ satisfies $$e^{-\eta} \leq \frac{u_\ast dM}{d\mathcal{L}} \leq e^{\eta}$$ on $u(U)$.
\end{lemma}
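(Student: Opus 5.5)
The plan is to work directly with normal coordinates. Three things need to be shown: that $u$ is a diffeomorphism onto $B_{\R^d}(0,\delta)$, the bi-Lipschitz bound with constant $e^{\eta}$, and the density bound.

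\textbf{Step 1: $u$ is a diffeomorphism.} Take $\delta_0$ smaller than the injectivity radius at $z$. Then $\exp_z$ is a diffeomorphism from $B_{T_zM}(0,\delta)$ onto $B_M(z,\delta)$. Fixing an orthonormal frame of $T_zM$, the map $u = (\exp_z)^{-1}$ is a diffeomorphism onto $B_{\R^d}(0,\delta)$.

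\textbf{Step 2: the metric in these coordinates.} Write $g_{ij}(y)$ for the metric in normal coordinates. The key facts are $g_{ij}(0)=\delta_{ij}$ and $\partial_k g_{ij}(0)=0$, so by smoothness $|g_{ij}(y)-\delta_{ij}| \le C|y|^2$ for $|y|$ small. Hence for $\delta_0$ small enough, the quadratic forms satisfy
\[
e^{-2\eta}|v|^2 \le g_y(v,v) \le e^{2\eta}|v|^2 \quad \text{for all } |y|<\delta_0 .
\]
Actually $C^0$-closeness of $g$ to $\delta$ is all that is needed, so continuity of $g$ at $0$ already suffices.

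\textbf{Step 3: the density bound.} The density of $u_*dM$ with respect to Lebesgue measure is $\sqrt{\det g}$. The same closeness gives $e^{-\eta}\le\sqrt{\det g}\le e^{\eta}$ after shrinking $\delta_0$.

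\textbf{Step 4: the distance comparison.} This is the main obstacle. The difficulty is that $d_M(x,y)$ is an infimum over all curves in $M$, including curves that leave $U$.

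\emph{Upper bound on $d_M$.} Join $u(x)$ and $u(y)$ by the straight segment. It stays in the convex ball $B(0,\delta)$, and its Riemannian length is at most $e^{\eta}|u(x)-u(y)|$. This gives $d_M(x,y)\le e^{\eta}d_{\R^d}(u(x),u(y))$.

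\emph{Lower bound on $d_M$.} Let $\gamma$ be a curve in $M$ from $x$ to $y$ of length $L < e^{\eta}|u(x)-u(y)|$; I want a contradiction.
\begin{itemize}
\item If $\gamma$ stays in $U$, its Euclidean image has length at most $e^{\eta}L$, which is at least $|u(x)-u(y)|$, so the bound holds.
\item If $\gamma$ exits $U$, it would have to leave a ball of radius $\delta$. This forces $L$ to be large compared with the distances of $x$ and $y$ to $\partial U$, but that is not automatically larger than $|u(x)-u(y)|$ when $x$ and $y$ lie near the boundary.
\end{itemize}
To handle the second case, I would apply the metric comparison of Step 2 on the larger ball $B_M(z,3\delta)$, still inside the injectivity radius, and take $\delta \le \delta_0/3$. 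Any curve from $x$ to $y$ that exits $B_M(z,3\delta)$ has length at least $4\delta$. This exceeds $e^{\eta}\cdot 2\delta \ge e^{\eta}|u(x)-u(y)|$ once $\eta$ is small, and I may assume $\eta<\log 2$ without loss of generality. So a curve witnessing the infimum stays in the larger ball, where the first case applies with the coordinates on $B(0,3\delta)$.

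This yields $e^{-\eta}d_M\le d_{\R^d}(u(x),u(y))\le e^{\eta}d_M$. The argument is pointwise in $z$, which matches the statement's dependence of $\delta_0$ on $z$. By compactness the constants could even be made uniform.
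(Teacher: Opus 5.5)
Your argument is correct. The paper gives no proof of its own here and simply imports the lemma from \cite{DGM26}. Your route is the standard normal-coordinates proof:
\begin{itemize}
\item $g_{ij}$ is $C^0$-close to $\delta_{ij}$ near the origin;
\item the density of $u_\ast dM$ is $\sqrt{\det g}$;
\item curves leaving $U$ are controlled by working on the enlarged ball $B_M(z,3\delta)$, with the lemma's $\delta_0$ taken to be one third of a radius that is below $\mathrm{inj}(z)$ and on which the metric comparison holds.
\end{itemize}

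One small slip concerns the framing of the lower bound. The contradiction hypothesis should be $L < e^{-\eta}|u(x)-u(y)|$, not $L < e^{\eta}|u(x)-u(y)|$. The latter can never be contradicted: in flat space the straight segment satisfies it. Your two cases in fact prove the correct statement $|u(x)-u(y)| \le e^{\eta}L$ for every curve, and taking the infimum gives the bound without needing a minimizer. In the second case this holds for any $\eta \ge 0$, since $L > 4\delta > 2\delta > |u(x)-u(y)|$. So the restriction $\eta < \log 2$ is unnecessary.
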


Choose  $\eta > 0$ sufficiently small that $\lam_0 e^{-(d+1)\eta} =: \lam_0' > \lam_c$.
For $x \in M$, let $u, \delta > 0$ be as guaranteed by \Cref{lem:almostflat}.  We define $S := u(B_M(x,\delta))$ and 
\begin{equation} \label{eq:G-S-hat-def}
    G_S := u(G^{n}_M(r) \cap B_M(x,\delta)), \qquad 
    \Ghat_S := (e^{-\eta} r)^{-1} G_S\,.
\end{equation}

We can then identify 
the rescaling $\Ghat_S$ as the restriction
of a random geometric graph on $\R^d$
coming from a metric and measure of bounded distortion:

\begin{lemma}[Lemma 17 in \cite{DGM26}]\label[lemma]{lem:identify-G-S}
    Write $\lam = nr^d$.
    There is a metric $\sigma$ on $\R^d$ and measure $\nu$ satisfying $$ \lambda e^{-(d+1)\eta}  \leq \frac{d\nu}{d\mathcal{L}} \leq \lambda e^{(d+1)\eta}\,,\qquad  1 \leq \frac{\|p - q\|}{\sigma(p,q)} \leq e^{2\eta} \quad \text{ for all }p,q \in \R^d$$ so that if we set $G^\nu$ to be the geometric random graph on $\R^d$ whose vertex set is the Poisson point process of intensity measure $\nu$ and whose edges are between pairs $p,q$ with $\sigma(p,q) \leq 1$ then $$\widehat{G}_S \equiv G^\nu \cap (e^{-\eta} r)^{-1} S\,.$$
\end{lemma}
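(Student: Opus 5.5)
We construct $\sigma$ and $\nu$ explicitly: on $\widehat S := (e^{-\eta}r)^{-1}S$ we pull back $d_M$ and $n\,dM$ along the rescaled chart, and we extend both to all of $\R^d$ so that the distortion bounds survive. Let $\phi:\widehat S\to B_M(x,\delta)$ be $\phi(p)=u^{-1}(e^{-\eta}r\,p)$. On $\widehat S$ set $\sigma_S(p,q):=r^{-1}d_M(\phi(p),\phi(q))$. With this choice two vertices $x,y\in B_M(x,\delta)$ are adjacent in $G^n_M(r)$ iff $d_M(x,y)\le r$, iff $\sigma_S(p,q)\le 1$ for their images $p,q$.

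First check the distortion bound on $\widehat S$. \Cref{lem:almostflat} gives $e^{-\eta}d_M\le\|u(x)-u(y)\|\le e^{\eta}d_M$. Dividing by $e^{-\eta}r$ yields $\sigma_S(p,q)\le\|p-q\|\le e^{2\eta}\sigma_S(p,q)$, which is exactly the required inequality $1\le\|p-q\|/\sigma\le e^{2\eta}$ on $\widehat S$.

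Next we extend $\sigma_S$ to a metric $\sigma$ on $\R^d$ with the same bounds and with $\sigma|_{\widehat S}=\sigma_S$; this is the step needing care. Define a cost $c(a,b)=\sigma_S(a,b)$ if $a,b\in\widehat S$ and $c(a,b)=\|a-b\|$ otherwise. Let $\sigma(p,q)$ be the infimum of $\sum_i c(x_i,x_{i+1})$ over finite chains $p=x_0,\dots,x_k=q$. The properties follow in turn.
\begin{itemize}
\item $\sigma$ is symmetric and satisfies the triangle inequality by construction.
\item Every step has $c(a,b)\ge e^{-2\eta}\|a-b\|$, so the Euclidean triangle inequality gives $\sigma(p,q)\ge e^{-2\eta}\|p-q\|$.
\item The one-step chain gives $\sigma(p,q)\le c(p,q)\le\|p-q\|$.
\end{itemize}
It remains to rule out shortcuts through the complement, i.e.\ to show $\sigma=\sigma_S$ on $\widehat S$. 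Take a chain from $a$ to $b$ in $\widehat S$ and split it at its points lying in $\widehat S$. Consider an excursion leaving $\widehat S$ at $a'\in\widehat S$ and re-entering at $b'\in\widehat S$. All of its steps cost Euclidean length, so the excursion costs at least $\|a'-b'\|\ge\sigma_S(a',b')$. Every other step costs $\sigma_S$ directly. The triangle inequality for $\sigma_S$ (inherited from $d_M$) then bounds the chain cost below by $\sigma_S(a,b)$.

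Finally define $\nu$ as the pushforward of $n\,dM|_{B_M(x,\delta)}$ under $\phi^{-1}$ on $\widehat S$, and as $\lam\mathcal L$ on $\R^d\setminus\widehat S$. The density of $u_*dM$ lies in $[e^{-\eta},e^{\eta}]$, and scaling by $(e^{-\eta}r)^{-1}$ multiplies densities by $(e^{-\eta}r)^d$. Hence on $\widehat S$ the density of $\nu$ is $n(e^{-\eta}r)^d$ times a number in $[e^{-\eta},e^{\eta}]$, which lies in $[\lam e^{-(d+1)\eta},\lam e^{(d+1)\eta}]$; outside $\widehat S$ it is $\lam$, which trivially satisfies the same bounds.

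By the mapping theorem for Poisson processes, the image under $\phi^{-1}$ of the Poisson process of intensity $n\,dM$ restricted to $B_M(x,\delta)$ is a Poisson process on $\widehat S$ of intensity $\nu|_{\widehat S}$. By the restriction property of Poisson processes, this has the law of $G^\nu$'s vertex set intersected with $\widehat S$. Since $\sigma=\sigma_S$ on $\widehat S$, the edge relations agree, giving $\widehat G_S\equiv G^\nu\cap(e^{-\eta}r)^{-1}S$.
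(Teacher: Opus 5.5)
Your proof is correct. The paper does not prove this statement; it imports it as Lemma 17 of \cite{DGM26}, so there is no in-paper argument to compare with, and your argument is a complete, self-contained verification. It has four steps:
\begin{enumerate}
\item Pull back $r^{-1}d_M$ and $n\,dM$ through the rescaled normal-coordinate chart $\phi$.
\item Extend the metric by the chain-infimum. Detours through $\R^d\setminus\widehat{S}$ then cost Euclidean length, which is at least $\sigma_S$, so they can never undercut $\sigma_S$ on $\widehat{S}$.
\item Extend $\nu$ by $\lam\mathcal{L}$ outside $\widehat{S}$.
\item Apply the Poisson mapping and restriction theorems.
\end{enumerate}

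Two small remarks. First, your density on $\widehat{S}$ actually lies in the sharper range $[\lam e^{-(d+1)\eta},\lam e^{-(d-1)\eta}]$. Second, you prove equality in law, but the paper later uses an almost-sure coupling. That coupling is immediate from your construction: take $X^\nu\cap\widehat{S}$ to be the image of the manifold process, and superpose an independent Poisson process of intensity $\lam\mathcal{L}$ on the complement.
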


The above gives our recipe for understanding patches of random geometric graphs on Riemannian manifolds.
If we restrict $G^n_M(r)$
to a patch $U$ of $M$,
and then pushforward by geodesic normal coordinates, we get a graph $G_S$
on $\R^d$.
If we rescale, we get another graph $\Ghat_S$
on $\R^d$, which is equal in distribution 
to the restriction of $G^\nu$
to $(e^{-\eta} r)^{-1}S$,
where the intensity measure $\nu$
underlying $G^\nu$ is close to $\lam = nr^d$
times Lebesgue measure, and the underlying metric $\sigma$ is close to Euclidean distance on $\R^d$.
Importantly, $G^\nu$
stochastically dominates the supercritical homogeneous random geometric graph $G^{\lam_0'} = G^{\lam_0'}_{\R^d}(1)$,
in the sense that there is a coupling of $G^\nu$ and $G^{\lam_0'}$
such that $G^{\lam_0'}$
is almost surely a subgraph of $G^\nu$.
With this in mind, we can proceed with our proofs.

\begin{prop} \label[prop]{A_1 likely}
    Let $\kappa > 0$, $\lam_0 > \lam_c(d)$.
    Then there exist 
    $1 \le Q(\kappa,\lam_0,d), $
    and $r_0(\kappa,\lam_0,M)>0$
    such that whenever
    $n r^d \ge \lam_0$ and $0<r \le r_0$, we have
    \[
        \Prob({A^1}'_{Q, \rho}(n;r))
        \ge 1 - r^\kappa.
    \]
\end{prop}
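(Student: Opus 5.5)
We reduce the global event ${A^1}'_{Q,\rho}(n,r)$ to a polynomial number of local events, each controlled by supercritical continuum percolation on $\R^d$ via \Cref{lem:identify-G-S}.

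\textbf{Step 1: finite chart cover.} By compactness of $M$, choose finitely many points $x_1,\dots,x_m$ (with $m$ depending only on $M$ and $\eta$) with the following property. The balls $B_M(x_j,\delta_j/2)$ cover $M$, where each $\delta_j\le\delta_0(x_j,\eta)$ comes from \Cref{lem:almostflat}. Inside each chart, pick a $r|\log r|$-net $\{p_{j,k}\}$ of $B_M(x_j,\delta_j/2)$. Its cardinality is $O(r^{-d})$, so the total number of net points is $O_M(r^{-d})$.

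\textbf{Step 2: local event for each net point.} Fix a net point $p=p_{j,k}$ and set $\hat p=(e^{-\eta}r)^{-1}u(p)$. Let $L=Q'|\log r|$ for a constant $Q'$ to be chosen. By \Cref{lem:identify-G-S} and the monotone coupling $G^{\lam_0'}\subseteq G^\nu$, it suffices to bound the probability of the following event in the homogeneous supercritical model $G^{\lam_0'}$ on $\R^d$: the box $\hat p+[-L,L]^d$ contains a vertex whose cluster, restricted to the box $\hat p+[-3L,3L]^d$, has Euclidean diameter at least $L$.

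We must check that this stays inside the chart: $[-3L,3L]^d$ sits inside $(e^{-\eta}r)^{-1}S$ because $L r=O(r|\log r|)\ll\delta_j/2$ for small $r$. The metric distortion is bounded by $e^{2\eta}$ under $\sigma$ and by $e^{\eta}$ under $u$. Pulling back through $u$ and the rescaling therefore gives a vertex $\mathring p$ within $d_M$-distance $C Q' r|\log r|$ of $p$, whose component in $G^n_M(r)$ has $d_M$-diameter at least $c Q' r|\log r|$.

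\textbf{Step 3: the percolation input.} We need the standard supercritical estimate: the probability that no cluster in $[-L,L]^d$ has diameter at least $L$ within $[-3L,3L]^d$ is at most $e^{-cL}$, with $c=c(\lam_0',d)>0$. This is where all the probabilistic content lies, and it is the step I expect to be the main obstacle, since we need the precise form (existence of a crossing cluster, localized) with exponential decay.

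I would first cite the corresponding result used in Section 4 of \cite{DGM26}. That result rests on Penrose–Pisztora type estimates: for $\lam>\lam_c$, a box of side $L$ contains a crossing cluster except with probability $e^{-cL^{1/?}}$. In $d\ge2$, the crossing probability in fact fails only with probability $e^{-cL}$, via the renormalization of Penrose and Pisztora. If that exact statement is unavailable, I would instead use the following fact: a crossing of an $L$-box exists except with probability $\exp(-cL^{d-1})$ (and $\le e^{-cL}$ suffices).

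\textbf{Step 4: union bound and choice of constants.} With $L=Q'|\log r|$, the failure probability per net point is $r^{cQ'}$. A union bound over $O(r^{-d})$ net points gives total failure probability $O(r^{cQ'-d})$. Choose $Q'$ with $cQ'>d+\kappa+1$. Then set $Q$ large relative to $Q'$ to absorb the distortion constants $C,c$ and the net spacing. Concretely, take $Q$ such that $p'$ lies within $Qr|\log r|$ of every point of its net cell, and require diameter $\ge 2Qr|\log r|$ by taking the crossing scale to be $L=C'Q|\log r|$ in Step 2. This yields $\Prob({A^1}'_{Q,\rho})\ge1-r^\kappa$ for $r\le r_0$. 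Note that $\rho$ plays no role in this event.
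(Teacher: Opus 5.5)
Your architecture is the same as the paper's: a finite chart cover, an $O(r^{-d})$ net, comparison with $G^{\lam_0'}$ inside each chart, and a union bound at scale $|\log r|$. However, the percolation event in Step 2 cannot give ${A^1}'_{Q,\rho}$, and the constant choice in Step 4 is circular.

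${A^1}'_{Q,\rho}$ asks for a vertex at $d_M$-distance at most $Qr|\log r|$ from $p$ whose component has $d_M$-diameter at least $2Qr|\log r|$. So the component must be at least twice as large as its distance from $p$. This factor is not cosmetic: it is what lets $A^3_{Q,\rho}$ identify the component of $\mathring p$ as the giant, giving ${A^1}'\cap A^2\cap A^3\subseteq A_{Q,\rho}$.

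Your event only produces a vertex somewhere in $\hat p+[-L,L]^d$, i.e.\ at Euclidean distance up to $\sqrt d\,L$ from $\hat p$, whose cluster has diameter at least $L$. Pulled back, this is a vertex at $d_M$-distance up to about $\sqrt d\,rL$ (plus the net spacing) whose component has $d_M$-diameter about $rL$, up to factors $e^{\pm O(\eta)}$. Forcing the diameter to be $\ge 2Qr|\log r|$ requires $L\gtrsim 2Q|\log r|$. That makes the distance bound about $2\sqrt d\,Qr|\log r|>Qr|\log r|$. So ``choose $Q$ large relative to $Q'$'' and ``take $L=C'Q|\log r|$'' cannot both hold.

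Enlarging the windows does not help while your only input is existence of a crossing. A crossing of a box around $\hat p$ certifies a cluster whose diameter is comparable to its possible distance from $\hat p$. It never certifies a cluster that is close to $\hat p$ and much larger than that distance.

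The missing ingredient is a statement that a \emph{large} cluster passes \emph{close} to every point. The paper takes this from \Cref{Yao o close in infinite comp}: except with probability $e^{-cR^{d-1}}$, the infinite cluster meets $B(\hat p,R)$. A vertex of the infinite cluster within rescaled distance $e^{-2\eta}Q|\log r|$ of $\hat p$ has a path to infinity. Its component inside the chart therefore reaches rescaled distance at least $3Q|\log r|$. This is why the paper's local event $A_1(U)$ only concerns $p$ with $d_M(p,U^c)\ge 4Qr|\log r|$.

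If you want to keep purely local box events (which would make your localization cleaner than the paper's use of an infinite-volume object), you need the Penrose--Pisztora \emph{uniqueness} estimate, not just existence of a crossing. It says that, except with probability $e^{-c\ell}$, the box $\hat p+[-K\ell,K\ell]^d$ has a crossing cluster and every other cluster in it has diameter $<\ell$. The crossing of the small box $\hat p+[-\ell,\ell]^d$ has diameter about $2\ell$. It must therefore lie in the big crossing cluster, whose diameter is at least $2K\ell$. Taking $K\gg\sqrt d$ then decouples the distance scale from the diameter scale.
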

\begin{proof}
    First, for a ball $U = B_M(x,\epsilon) \subseteq M$, define the local event
    \begin{align*}
    A_1(U)
    =
    \left\{
        \begin{array}{c}
            \forall p \in M \mbox{ such that }
            d_M(p,U^c) \ge 4Qr|\log r|, \\\exists \mathring{p} \in G^n_M(r)
            \mbox{ with }  d_M(p, \mathring{p}) \le Q r |\log r| \\
            \mbox{such that the connected component of }\mathring{p} \\
            \mbox{has } d_M \mbox{-diameter at least } 2Qr|\log r|
        \end{array}
    \right\}.
\end{align*}
Note that if $\mathcal{O}$ is a finite open cover of $M$, then for sufficiently small $r>0$
we have
\[
    \bigcap_{U \in \mathcal{O}} A_1(U) \subseteq {A^1}'_{Q,\rho}.
\]
Thus it suffices to prove the statement locally.

Given $\lam_0 > \lam_c$,
choose $\eta>0$
sufficiently small that 
\[
e^{-\eta(d+1)}\lam_0
=: \lam_0' > \lam_c.
\]
Then, for each $x$, by 
\Cref{lem:almostflat} and
\Cref{lem:identify-G-S},
there exists $\epsilon_x > 0$,
$\sigma$ a metric $\sigma$ on $\R^d$
with metric distortion at most $e^\eta$
and $\nu$ a measure on $\R^d$
which dominates $\lam_0' \mathcal{L}$,
such that we can couple the
associated graph $G^\nu$ with $G^n_M(r)$
so that
\[
    (e^{-\eta} r)^{-1}u(G^n_M(r) \cap B(x,\epsilon_x))=: \Ghat_S = G^\nu \cap (e^{-\eta}r)^{-1}S,
\]
where $S = u(B(x,\epsilon_x)) = B(0,\epsilon_x)$.

Moreover, using Poisson thinning,
we also have a coupling such that we have the subgraph inclusion
\[
    G_{\R^d}^{\lam_0'}(1) \cap (e^{-\eta}r)^{-1}S \subseteq \Ghat_S .
\]
Denoting by $\mathring{G}^{\lam_0'}_{\R^d}(1)$ the (unique) infinite component of $G^{\lam_0'}_{\R^d}(1)$, 
we recall the following lemma from Yao, Chen, and Guo ~\cite{YCG11}, which states that each point is not too far from $\mathring{G}^{\lam_0'}_{\R^d}(1)$ with high probability:
\begin{lemma}[Lemma 3.3, Yao, Chen, and Guo~\cite{YCG11}] \label[lemma]{Yao o close in infinite comp}
    Suppose $\lambda > \lambda_c$ and let $C_\infty$ denote the (almost-surely) unique infinite component of $G_{\R^d}^\lambda(1)$. Then there exists a constant $c = c_\lambda>0$ such that for all $R \geq c^{-1}$ we have 
    \[
        \Prob(B_{\R^d}(0,R) \cap C_\infty = \emptyset) \le \exp(-c R^{d-1}).
    \]  
\end{lemma}

The subgraph inclusion above 
and our metric distortion bounds then give us
\[
    \Prob(A_1(U)) \ge
    \Prob\left(
    \begin{array}{c}
    \forall p \in (e^{-\eta}r)^{-1}S,
    \exists \mathring{p} 
    \in \mathring{G}^{\lam_0'}_{\R^d}(1)\\
    \mbox{such that } \|p - \mathring{p}\| \le e^{-2\eta}Q |\log r|
    \end{array}
    \right).
\]
Taking $X \subseteq (e^{-\eta}r)^{-1}S$
to be a $1$-dense subset with size $|X| = O(r^{-d})$,
we see that
\begin{align*}
    \Prob\left(
    \begin{array}{c}
    \exists p \in (e^{-\eta}r)^{-1}S, 
    \mbox{ such that } \\ B(p,e^{-2\eta}Q |\log r|) \cap  \mathring{G}^{\lam_0'}_{\R^d}(1)= \emptyset
    \end{array}
    \right) 
    &\le
    \Prob\left(
    \begin{array}{c}
    \exists p \in X \mbox{ such that } \\
    B(p,e^{-2\eta}Q |\log r| + 1) \cap  \mathring{G}^{\lam_0'}_{\R^d}(1) = \emptyset
    \end{array}
    \right) \\
    \le O(r^{-d}) e^{-c[e^{-2\eta}Q|\log r|+1]}
\end{align*}
for some $c(\lam_0')>0$ not depending on $Q$,
by \Cref{Yao o close in infinite comp}.
Thus for a sufficiently large choice of $Q$
this is $O(r^{\kappa+1})$.
Note that this choice of $Q$ only depends on $d, \lam_0, \eta$ and not on $x \in M$.

Now, for each $x \in U$, choose $\epsilon_x>0$
such that
$A_1(B(x,\epsilon_x))$ has probability $1-O(r^{\kappa+1})$.
$\{B(x,\epsilon)\}_{x \in M}$
forms a cover of $M$,
so by compactness there exists
a finite subcover $\mathcal{O}$.
Thus we have
\[
    \Prob(A_1^c) \le \sum_{U \in \mathcal{O}} O(r^{\kappa+1}) = O(r^{\kappa+1}) \le r^\kappa,
\]
for $0< r \le r_0(\lam_0,d,M)$,
as desired.
\end{proof}

\begin{prop} \label[prop]{A_2 likely}
    Let $\kappa > 0$, $\lam_0 > \lam_c(d)$.
    Then there exist 
    $1 \le Q(\kappa,\lam_0,d), 
    \rho(\kappa,\lam_0,d) < \infty$,
    and $r_0(\kappa,\lam_0,M)>0$
    such that whenever
    $n r^d \ge \lam_0$ and $0<r \le r_0$, we have
    \[
        \Prob(A^2_{Q,\rho}(n,r)) \ge 1 - r^\kappa.
    \]
\end{prop}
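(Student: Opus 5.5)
The plan is to follow the template of \Cref{A_1 likely}: localize to small balls, transfer to a rescaled continuum model on $\R^d$ via \Cref{lem:almostflat} and \Cref{lem:identify-G-S}, and invoke a chemical-distance estimate for supercritical continuum percolation. Concretely, for a ball $U = B_M(x,\epsilon_x)$ I would define a local event $A_2(U)$: the conclusion of $A^2_{Q,\rho}$ holds for all pairs $p,q \in G^n_M(r)$ with $d_M(p,U^c) \ge 10 Q r|\log r|^2$. For a finite subcover $\mathcal{O}$ and $r$ small, every admissible pair $p,q$ (at distance $\le 2Qr|\log r|^2+2r$) lies deep inside some $U\in\mathcal{O}$. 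Hence $\bigcap_{U\in\mathcal{O}} A_2(U) \subseteq A^2_{Q,\rho}$, and a union bound reduces the task to showing $\Prob(A_2(U)^c) = O(r^{\kappa+1})$ uniformly in $x$.

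For the local statement, I would rescale by $(e^{-\eta}r)^{-1}$. Then $\Ghat_S = G^\nu \cap (e^{-\eta}r)^{-1}S$, where $\nu$ has density in $[\lam_0',\, \lam e^{(d+1)\eta}]$ times Lebesgue and $\sigma$ is $e^{2\eta}$-bi-Lipschitz to Euclidean distance. After rescaling, admissible pairs are at Euclidean distance at most $L := C Q|\log r|^2$, and the target bound becomes $d_G \le \rho\max(\|p-q\|, Q|\log r|)$ up to constants. The probabilistic input I would use is an Antal--Pisztora type chemical-distance bound in the form used in Section 4 of \cite{DGM26} (derived there from Yao--Chen--Guo \cite{YCG11}). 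The form I want is: for $G^\nu$ dominating a supercritical homogeneous model, there are $\rho, c$ such that for vertices $p,q$ near given points $a,b$,
\[
\Prob\bigl(p \leftrightarrow q \text{ in } G^\nu \cap B(a, 2L),\ d_G(p,q) > \rho\max(\|a-b\|, t)\bigr) \le e^{-ct}.
\]
Taking $t = Q|\log r|$ and union bounding over a $1$-net of pairs $(a,b)$ in the rescaled patch costs only $O(r^{-2d})$ pairs. Each pair contributes $r^{cQ}$, so a large $Q$ gives $O(r^{\kappa+1})$. Since $\nu$ may have large density when $\lam$ is large, I would want a bound uniform over all $\lam \ge \lam_0$. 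Monotonicity does not obviously give this, so I would simply invoke the version in \cite{DGM26}, which is stated for such $\nu$.

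The main obstacle is that $p$ and $q$ may lie in the same component of $G^n_M(r)$ only through a path exiting the local window. In that case the local chemical-distance bound does not apply directly. I would handle this with a uniqueness-of-large-clusters event, split into two cases.
\begin{itemize}
\item If the component of $p$ stays within distance $Q r|\log r|$ of $p$, then $p$ and $q$ are joined inside the window. The only issue is that the number of vertices could be of order $|\log r|^d$. I would instead apply the chemical-distance estimate to the cluster of $p$ restricted to the window, whose tail $e^{-ct}$ (with $t$ now the cluster diameter) handles this case.
\item Otherwise, both $p$ and $q$ are in clusters of diameter $\ge Qr|\log r|$ inside the window. A standard box-crossing/uniqueness estimate (the local analogue of \Cref{A_3 likely}, i.e.\ Lemma 14 of \cite{DGM26}) shows that, with probability $1-O(r^{\kappa+1})$, any two such clusters are connected within $B(p, 2L)$. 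This returns us to the local case.
\end{itemize}
Choosing $Q$ and then $\rho$ large, and taking $r_0$ from compactness as in \Cref{A_1 likely}, completes the proof.
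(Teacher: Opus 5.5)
Your main line matches the paper's proof: a local event on small charts with a buffer, the rescaled inhomogeneous model of \Cref{lem:identify-G-S}, the inhomogeneous Antal--Pisztora bound \Cref{non-uniform-Antal-Pisztora}, a union bound over $O(r^{-2d})$ pairs with $t=Q|\log r|$, and compactness. Your uniformity worry is resolved as you expected, since \Cref{non-uniform-Antal-Pisztora} is stated for every $\nu$ with $d\nu/d\mathcal{L}\ge\lam_0$.

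The real difference is that you confront a point the paper's proof passes over. \Cref{non-uniform-Antal-Pisztora} concerns components of the Euclidean graph $G^\nu$. The event $A^2_{Q,\rho}$ concerns components of $G^n_M(r)$, and there $p,q$ may be joined only by a path that leaves the chart. The paper applies the lemma directly, implicitly identifying the two notions of component. Your dichotomy closes this gap correctly: either the cluster of $p$ is local, so $p$ and $q$ are joined inside the window, or both window-clusters are macroscopic and a local uniqueness estimate joins them inside $B(p,2L)$.

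The cost is one extra input, which you should state as a separate lemma. You need a Penrose--Pisztora-type local uniqueness bound, in a box of side $\asymp|\log r|^2$, for clusters of diameter $\asymp Q|\log r|$. It must hold uniformly over all $\nu$ with density at least $\lam_0'$. This event is not monotone, so the uniformity issue you raised for chemical distances comes back here. Citing Lemma 14 of \cite{DGM26} does not give this: as used in this paper (to prove \Cref{A_3 likely}), it is a global statement about non-giant components on $M$, not the local Euclidean statement you need.

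Two small repairs are needed.
\begin{enumerate}
\item The short path from \Cref{non-uniform-Antal-Pisztora} is a path in $G^\nu$ with up to $\rho\max(\|a-b\|,Q|\log r|)$ edges. To be a path in $G^n_M(r)$ it must stay inside the chart, so your buffer must be of order $\rho Qr|\log r|^2$, not $10Qr|\log r|^2$. The paper's buffer $\rho(2Qr|\log r|^2+2r)$ is chosen for exactly this reason.
\item In case (a) you do not need $t$ to be the cluster diameter. Once $p$ and $q$ are joined inside the window, your displayed estimate with $t=Q|\log r|$ applies as written.
\end{enumerate}
In case (b), the window-cluster of $q$ is only guaranteed diameter about $Qr|\log r|/2$, not $Qr|\log r|$; this is harmless.
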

\begin{proof}
    Again, we define a local version of our
    desired event:
\begin{align*}
    A_2(U) =     
        \left\{
        \begin{array}{c}
            \forall p,q \in G^n_M(r)
            \mbox{ with } d_M(p,q) \le 2Qr|\log r|^2 + 2r \\ \mbox{ and } d_M(\{p,q\},U^c) \ge \rho(2Qr|\log r|^2 + 2r), \\ \mbox{if } p \mbox{ and } q \mbox{ lie in one component of } G^n_M(r) \mbox{, then } \\
             d_G(p,q) \le \rho \max( r^{-1}d_M(p,q), Q |\log r |)
        \end{array}
    \right\}.
\end{align*}
We will use the following (continuum, inhomogeneous) analogue of the Antal-Pisztora theorem from percolation \cite{AP96}, for which a proof sketch
is included in the appendix of \cite{DGM26}
(see also Lemma 3.4 in \cite{YCG11}):
\begin{lemma}[Lemma 24 of \cite{DGM26}] \label[lemma]{non-uniform-Antal-Pisztora}
    For each $\lam_0 > \lam_c(d)$ there is $c(d,\lam_0) > 0, \rho(d,\lam_0) < \infty$
    such that the following holds.

    Let $\sigma$ be a metric on $\R^d$
    which satisfies
    \[
        \sigma(x,y) \le \|x - y\| \le 2\sigma(x,y)
    \]
    for all $x,y \in \R^d$.
    Let $\nu$ be a positive measure on
    $\R^d$ which satisfies 
    $\frac{d\nu}{d\mathcal{L}} \ge \lam_0$.
    Let $G$ be the random geometric
    graph
    whose vertices are the
    points of a Poisson process on $\R^d$
    with intensity measure $\nu$
    and whose edge set consists
    of pairs of vertices with
    $\sigma$-distance at most 1.

    Then for any $x,y \in \R^d$
    with $\|x - y\| \ge 1$
    and any $t \ge \rho\|x-y\|$ we have
    \[
        \Prob\left( 
        \begin{array}{c}\exists x' \in x+[-1,1]^d, y' \in y+[-1,1]^d \\ \mbox{such that } x' \mbox{ and } y' \mbox{ lie in one} \\ \mbox{component of } G, \mbox{ but } d_G(x',y') \ge t
        \end{array}
        \right)
        \le \exp(-ct).
    \]
\end{lemma}
The above is stated slightly differently in \cite{DGM26} (as a statement conditional on $x,y$ lying in the Poisson process), but the proof immediately yields this statement as well.

Again, given $\lam_0 > \lam_c$,
take $\eta > 0$
sufficiently small that $\lam_0':=e^{-(d+1)\eta}\lam_0 > \lam_c$.
Let $U = B(x,\epsilon)$ be a sufficiently small ball that \Cref{lem:almostflat} and \Cref{lem:identify-G-S} hold and take $G^\nu$ and $\Ghat_S$
as in \cref{lem:identify-G-S}. 
Since $\nu$ dominates $\lam_0' \mathcal{L}$ with $\lam_0' > \lam_c$, take
$\rho'$ and $c>0$
as guaranteed by \cref{non-uniform-Antal-Pisztora},
and set $\rho = e^{2\eta} \rho'$.

Then we have that
\begin{align*}
    \Prob(A_2(U)^c)
    &\le
    \Prob\left( \bigcup_{x,y \in \Z^d \cap 
    (e^{-\eta}r)^{-1}S} 
    \left\{ 
    \begin{array}{c}
    \exists x' \in x+[-1,1]^d, y' \in y+[-1,1]^d \\ \mbox{such that } \|x - y\| \le e^{2\eta} Q |\log r|^2  
    \mbox{ and} \\
    d_G(x,y) > \rho' \max( \|x-y\|,Q|\log r|)
    \end{array}
    \right\} \right) \\
    &\le
    \sum_{x,y \in \Z^d \cap 
    (e^{-\eta}r)^{-1}S}
    e^{-cQ|\log r|} \\
    &\le O(r^{cQ-d}),
\end{align*}
which is $O(r^{\kappa+1})$
for a sufficiently large choice of $Q$.
Using the same finite cover argument and
noting that for any fixed finite cover $\mathcal{O}$,
\[
    \bigcap_{U \in \mathcal{O}} A_3(U) \subset
    A_3
\]
for sufficiently small $r>0$, we are done.
\end{proof}

\begin{prop} \label[prop]{key event is likely}
    Fix $\lam_0 > \lam_c(d)$
    and $\kappa > 0$.
    Then, there 
    exist $1 \le Q, \rho < \infty$
    depending on $\lam_0, \kappa,$ and $d$
    and $r_0 > 0$ depending on $\lam_0, \kappa,$ and $M$
    such that
    \[
        \Prob(A_{Q,\rho}(n,r))
        \ge 1 - r^\kappa.
    \]
\end{prop}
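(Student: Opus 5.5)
The plan is to combine \Cref{A_3 likely}, \Cref{A_1 likely} and \Cref{A_2 likely} with a union bound, using the inclusion noted after the definition of ${A^1}'_{Q,\rho}(n,r)$:
\[
{A^1}'_{Q, \rho}(n,r) \cap A^2_{Q, \rho}(n,r) \cap A^3_{Q, \rho}(n,r) \subseteq A_{Q,\rho}(n,r).
\]
The one real issue is that the three propositions each produce their own constants, so I will first check that a single pair $(Q,\rho)$ works for all three at once.

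\textbf{Monotonicity in the constants.} Each event only becomes easier as $Q$ grows, and $A^2$ also as $\rho$ grows; the exception is the diameter threshold $2Qr|\log r|^2+2r$ in $A^2$, which also grows with $Q$. So I will not argue monotonicity directly. Instead I will note that in each proof, $Q$ can be taken to be any sufficiently large value.

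\begin{itemize}
\item In \Cref{A_3 likely}, any $Q > (d+\kappa)/c$ works, and $c$ does not depend on $Q$. The event does not involve $\rho$ at all.
\item In \Cref{A_1 likely}, the bound $O(r^{-d})e^{-c e^{-2\eta}Q|\log r|}$ holds for every $Q$, with $c$ independent of $Q$. Hence every sufficiently large $Q$ works, and again $\rho$ plays no role.
\item In \Cref{A_2 likely}, $\rho=e^{2\eta}\rho'$ is fixed by \cref{non-uniform-Antal-Pisztora}, independently of $Q$. The sum over pairs $x,y$ has $O(r^{-2d})$ terms, each at most $e^{-cQ|\log r|}$, giving $O(r^{cQ-2d})$. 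This again holds for all large $Q$, because enlarging $Q$ only enlarges the allowed separation polynomially in $|\log r|$ while the exponential bound improves.
\end{itemize}

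\textbf{Choice of constants.} I will take $\rho$ from \Cref{A_2 likely} and $Q$ the maximum of the three thresholds. I will apply each proposition with $\kappa+1$ in place of $\kappa$ and take $r_0$ to be the minimum of the three values, shrunk further if needed. The union bound then gives
\[
\Prob(A_{Q,\rho}(n,r)^c)\le 3r^{\kappa+1}\le r^\kappa
\]
for $r\le \min(r_0,1/3)$.

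\textbf{Dependence of constants.} $Q$ and $\rho$ depend only on $\kappa$, $\lam_0$ and $d$, because $\eta$ is chosen from $\lam_0$ and $d$ alone. The threshold $r_0$ additionally depends on $M$, through the finite cover.

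The main obstacle is making the uniformity in $Q$ explicit in \Cref{A_2 likely}. There, the local events require distance $\rho(2Qr|\log r|^2+2r)$ from $U^c$, and the finite cover $\mathcal{O}$ must still give $\bigcap_{U} A_2(U)\subseteq A^2_{Q,\rho}$. This requires the Lebesgue number of $\mathcal{O}$ to exceed that distance, which holds once $r$ is small enough, so it only affects $r_0$.
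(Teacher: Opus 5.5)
Your proposal is correct and matches the paper's proof: use the inclusion ${A^1}'_{Q,\rho}\cap A^2_{Q,\rho}\cap A^3_{Q,\rho}\subseteq A_{Q,\rho}$, apply the three propositions with $\kappa+1$ in place of $\kappa$, and take a union bound to get $3r^{\kappa+1}\le r^\kappa$ for $r\le\min(r_0',1/3)$. Your extra check that a single pair $(Q,\rho)$ works for all three propositions is a step the paper leaves implicit, and your way of doing it is right: every sufficiently large $Q$ works in each proof, with $\rho$ fixed by \cref{non-uniform-Antal-Pisztora}. This matters because, contrary to your passing remark, ${A^1}'_{Q,\rho}$ is not monotone in $Q$ either, since its requirement that the component have diameter at least $2Qr|\log r|$ grows with $Q$ — but your argument does not rely on that remark.
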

\begin{proof}
Using \Cref{A_1 likely}, \Cref{A_2 likely}, and \Cref{A_3 likely}, choose $1 \le Q, \rho < \infty$ and $r_0'>0$
so that for all $0 < r \le r_0'$, if $nr^d \ge \lam_0$,
we have
\[
    \Prob({A^1}'_{Q, \rho}(n,r)^c),
    \Prob(A^2_{Q, \rho}(n,r)^c),
    \Prob(A^3_{Q, \rho}(n,r)^c)
    \le r^{\kappa+1}.
\]
Then, since $A_{Q, \rho}(n,r) \supset {A^1}'_{Q, \rho}(n,r) \cap A^2_{Q, \rho}(n,r) \cap A^3_{Q, \rho}(n,r)$,
a union bound gives
\[
    \Prob(A_{Q, \rho}(n,r)^c) \le 3r^{\kappa + 1} \le r^{\kappa},
\]
where the last line holds as long as $0< r \le r_0 := \min(r_0',1/3)$.
\end{proof}

\section{Approximation of $1$-chains
and surjectivity}
\subsection{Approximation of $1$-chains}
A fundamental tool throughout
this article
will be a lemma which
allows us to approximate
arbitrary $1$-chains
in $M$ by $1$-chains in 
the giant component $\mathring{G}^n_M(r)$
in the supercritical regime.

Recall the definition of $A_{Q, \rho}(n,r)$
from \eqref{eq:key event}.
We show that on this event, we can approximate
$1$-chains in $M$
up to $O(r|\log r|)$ error.
For the following, recall that the \emph{injectivity radius} $\mathrm{inj}(M)$ of $M$
satisfies that for any $x \in M$, the exponential map $\exp_x:T_xM \to M$ at $x$
is injective when restricted to the ball $B(0,\mathrm{inj}(M)) \subseteq T_xM$.

\begin{lemma} \label[lemma]{approximate 1-chains}
    Suppose $r_0 > 0$ is
    sufficiently small
    that
    \begin{equation} \label{r small enough}
        3/|\log r| \le 1, \qquad
        4 Q \rho r|\log r|
        \le \mathrm{inj}(M)
    \end{equation}
    for all $0<r\le r_0$.
    Then, for all $0 < r \le r_0$,
    on the event $A_{Q,\rho}(n,r)$, 
    the following holds. 
    
    Suppose that 
    $c = \sum_{i=1}^N t_i \ell_i \in C_1(M)$
    and suppose that
    each $\diam (|\ell_i|) \le 2r$
    and 
    $\partial c \in C_0(\mathring{G}^n_M(r))$.

    Then, there exists $c' \in C_1(\mathring{G}^n_M(r))$
    and $\sigma \in C_2(M)$
    such that
    \[
        \partial \sigma = c' - c
    \]
    and for all $x' \in |\sigma|$,
    there exists $x \in |c|$
    such that $d_M(x',x) \le 4Q\rho r|\log r|$.

    In particular,
    $\partial c' = \partial c$
    and $|c'|$
    lies within the $4Q\rho r|\log r|$-neighborhood of $|c|$.
\end{lemma}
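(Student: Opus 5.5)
We build $c'$ and $\sigma$ simplex by simplex, working on the event $A_{Q,\rho}(n,r)$.

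\textbf{Step 1: snap endpoints to the giant.} Each $\ell_i$ is a singular $1$-simplex with endpoints $a_i=\ell_i(e_0)$ and $b_i=\ell_i(e_1)$. For every point $p$ that appears as an endpoint of some $\ell_i$, choose a vertex $\mathring{p}\in\mathring{G}^n_M(r)$ with $d_M(p,\mathring p)\le Qr|\log r|$, using $A^1_{Q,\rho}$. If $p$ is already a vertex of $\mathring{G}^n_M(r)$, take $\mathring p=p$. The choice depends only on the point $p$, not on the index $i$; this consistency is essential for the boundary terms to cancel later.

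\textbf{Step 2: build the connecting paths.} By construction,
\[
d_M(\mathring a_i,\mathring b_i)\le 2Qr|\log r|+2r\le 2Qr|\log r|^2+2r,
\]
using $|\log r|\ge 3$ from \eqref{r small enough}. Both endpoints lie in the giant component, so $A^2_{Q,\rho}$ gives an edge path $\gamma_i$ in $\mathring{G}^n_M(r)$ from $\mathring a_i$ to $\mathring b_i$ with at most $\rho\max(r^{-1}d_M,Q|\log r|)\le 3\rho Q|\log r|$ edges. Each edge has length at most $r$, so every point of $|\gamma_i|$ is within about $3\rho Qr|\log r|$ of $\mathring a_i$. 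Hence every point of $|\gamma_i|$ is within $(3\rho Q+Q)r|\log r|+2r\le 4Q\rho r|\log r|$ of $a_i\in|c|$; the constants may need some slack, which we absorb using $\rho\ge 1$ and \eqref{r small enough}. For each endpoint $p$ we also choose a geodesic segment $g_p$ from $p$ to $\mathring p$, taking $g_p$ constant when $\mathring p=p$. Set
\[
c'=\sum_i t_i\gamma_i\in C_1(\mathring G^n_M(r)).
\]

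\textbf{Step 3: fill in the loops.} Consider the closed loop $L_i=g_{a_i}+\gamma_i-g_{b_i}-\ell_i$. Its support lies in the ball $B_M(a_i,4Q\rho r|\log r|)$. By \eqref{r small enough} this radius is at most $\mathrm{inj}(M)$, so the ball is geodesically star-shaped about $a_i$ via $\exp_{a_i}$, and hence contractible. Contractibility gives $\sigma_i\in C_2$ of that ball with $\partial\sigma_i=L_i$. We can also write $\sigma_i$ explicitly as the geodesic cone from $a_i$, which keeps its support inside the ball. The delicate point is that the ball must be strictly inside the injectivity radius, and we should shrink the radius slightly if needed. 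Then $|\sigma_i|\subseteq B(a_i,4Q\rho r|\log r|)$ with $a_i\in|c|$, which is the required distance bound. Set $\sigma=\sum_i t_i\sigma_i$. This gives
\[
\partial\sigma=c'-c+\sum_i t_i\,(g_{a_i}-g_{b_i}).
\]

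\textbf{Step 4: cancel the leftover terms.} The map sending each singular $0$-simplex $p$ to $g_p$ is linear. Applied to $\partial c=\sum_i t_i(b_i-a_i)$, it shows that the leftover term $\sum_i t_i(g_{a_i}-g_{b_i})$ equals $-\sum_p(\text{coefficient of }p\text{ in }\partial c)\,g_p$. Every point $p$ with nonzero coefficient lies in $\partial c\in C_0(\mathring G^n_M(r))$, so it is a vertex of the giant, so $\mathring p=p$ and $g_p$ is the constant simplex. A constant $1$-simplex is not zero as a chain, but it is the boundary of a constant $2$-simplex. Adding these degenerate constant $2$-simplices to $\sigma$ removes them without enlarging the support. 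Therefore $\partial\sigma=c'-c$.

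\textbf{Consequences and main obstacle.} Since $\partial\partial=0$, we get $\partial c'=\partial c$. Also $|c'|\subseteq|\sigma|$ up to the degenerate pieces, which gives the neighborhood statement. The main obstacle is the bookkeeping: the snapping $p\mapsto\mathring p$ must be consistent across all $i$, the constant simplices must be handled so the cancellation is exact, and every filling must stay inside a ball below the injectivity radius.
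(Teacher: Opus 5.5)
Your proposal is correct and is essentially the paper's proof. Your consistent snapping $p\mapsto\mathring p$, the graph geodesics $\gamma_i$ and the segments $g_p$ are the paper's maps $\phi$ (on $C_0$ and $C_1$) and $\psi$; your loop $L_i$ is the paper's cycle $z_i=\phi(\ell_i)-\ell_i-\psi(\partial\ell_i)$, filled in the contractible ball $B(a_i,4Q\rho r|\log r|)$; and the leftover term vanishes because $\partial c$ is supported on the giant.

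The differences are cosmetic. The paper sets $\psi(p)=0$ when $p$ is already a giant vertex, which avoids your constant $1$-simplices and the degenerate $2$-simplices you need to cancel them. Your displayed inequality $(3\rho Q+Q)r|\log r|+2r\le 4Q\rho r|\log r|$ is false as written when $\rho=1$. However, the $+2r$ is not needed for points of $\gamma_i$, and the edge count is at most $\rho(2Q|\log r|+2)<3\rho Q|\log r|$, so the containment in the ball still holds.
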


\begin{proof}
    Define $\phi:C_0(M) \to C_0(\mathring{G}^n_M(r))$
    as follows: for each
    $0$-simplex $p \in C_0(M)$,
    $\phi(p)$ is the $d_M$
    nearest vertex
    of $\mathring{G}^n_M(r)$,
    breaking ties arbitrarily;
    then extend by linearity.
    Note that if
    $\eta \in C_0(\mathring{G}^n_M(r))$,
    then $\phi(\eta) = \eta$.
    
    Further define
    $\phi:C_1(M) \to C_1(\mathring{G}^n_M(r))$
    as follows.
    If $\ell$ is a singular $1$-simplex
    in $M$ with $\partial\ell = b - a$,
    then let $\phi(\ell) \in C_1(\mathring{G}^n_M(r))$
    be given by a graph geodesic (shortest edge-path)
    in $\mathring{G}^n_M(r)$
    from $\phi(a)$ to $\phi(b)$;
    if there are multiple candidates,
    consider the one of least
    total Riemannian length.
    If $\phi(b) = \phi(a)$,
    just take $\phi(\ell)=0$.
    Then extend by linearity.

    Note that with these definitions,
    $\phi$ is a \emph{chain map},
    that is, $\phi(\partial \eta) = \partial \phi(\eta)$
    for any $\eta \in C_1(M)$.

    Now suppose $c = \sum_{i=1}^N t_i \ell_i \in C_1(M)$
    with each $|\diam(\ell_i)| \le 2r$
    and $\partial c \in C_0(\mathring{G}^n_M(r))$.
    We will 
    take $c' = \phi(c)$.
    We need to construct
    $\sigma \in C_2(M)$
    such that $\partial \sigma = c' - c$
    and such that $|\sigma|$
    stays close to $|c|$.

    To this end, first define
    $\psi:C_0(M) \to C_1(M)$
    as follows: for each $0$-simplex
    $a$ in $M$, 
    if $\phi(a) \ne a$, take $\psi(a)$
    to be the singular $1$-simplex
    given by the $d_M$-geodesic\footnote{
    The event $A_{Q,\rho}$
    plus our bound on $r$ compared to the injectivity radius will imply that this geodesic is unique, although
    if we broke ties arbitrarily
    this would not harm the argument.}
    from $a$ to $\phi(a)$.
    If $a = \phi(a)$,
    simply take $\psi(a) = 0$,
    Note that in either case we 
    have $\partial \psi(a) = \phi(a)-a$.
    
    Now, consider 
    $\ell_i \in c$
    with $\partial \ell_i = b_i - a_i$.
    Define
    \[
        z_i := \phi(\ell_i)
        - \psi(b_i) - \ell_i + \psi(a_i)
        = \phi(\ell_i) - \ell_i - \psi(\partial \ell_i),
    \]
    and note
    that $\partial z_i = 0$.
    From here we will
    show that $z_i$
    has small enough
    diameter that it must
    be contained
    in a contractible subset of $M$,
    and therefore is a boundary.

    \begin{figure}
        \centering

\tikzset{every picture/.style={line width=0.75pt}} 

\begin{tikzpicture}[x=0.75pt,y=0.75pt,yscale=-1,xscale=1]

\draw  [color={rgb, 255:red, 155; green, 155; blue, 155 }  ,draw opacity=1 ][fill={rgb, 255:red, 210; green, 210; blue, 220 }  ,fill opacity=1 ] (305,105) -- (269,113) -- (281,60.5) -- cycle ;
\draw  [color={rgb, 255:red, 155; green, 155; blue, 155 }  ,draw opacity=1 ][fill={rgb, 255:red, 210; green, 210; blue, 220 }  ,fill opacity=1 ] (435,167) -- (384,126) -- (388.5,63.5) -- (417.5,60.5) -- (439,90.5) -- cycle ;
\draw  [color={rgb, 255:red, 155; green, 155; blue, 155 }  ,draw opacity=1 ][fill={rgb, 255:red, 210; green, 210; blue, 220 }  ,fill opacity=1 ] (483,128) -- (435,167) -- (439,90.5) -- (468.5,103.5) -- cycle ;
\draw  [color={rgb, 255:red, 155; green, 155; blue, 155 }  ,draw opacity=1 ][fill={rgb, 255:red, 210; green, 210; blue, 220 }  ,fill opacity=1 ] (147,102) -- (148,138) -- (111.5,90.5) -- (138.5,81) -- cycle ;
\draw  [color={rgb, 255:red, 155; green, 155; blue, 155 }  ,draw opacity=1 ][fill={rgb, 255:red, 210; green, 210; blue, 220 }  ,fill opacity=1 ] (177.5,105) -- (207.5,81.5) -- (206,97) -- (148,138) -- (147,102) -- cycle ;
\draw  [color={rgb, 255:red, 155; green, 155; blue, 155 }  ,draw opacity=1 ][fill={rgb, 255:red, 210; green, 210; blue, 220 }  ,fill opacity=1 ] (223,57.5) -- (247.5,45.5) -- (281,60.5) -- (269,113) -- (206,97) -- (207.5,81.5) -- cycle ;
\draw  [color={rgb, 255:red, 155; green, 155; blue, 155 }  ,draw opacity=1 ][fill={rgb, 255:red, 210; green, 210; blue, 220 }  ,fill opacity=1 ] (345,45.5) -- (374.5,54) -- (388.5,63.5) -- (384,126) -- (305,105) -- (281,60.5) -- (312.5,48.5) -- cycle ;
\draw    (148,138) -- (206,97) ;
\draw    (206,97) -- (269,113) ;
\draw    (269,113) -- (305,105) ;
\draw    (305,105) -- (384,126) ;
\draw    (384,126) -- (435,167) ;
\draw    (435,167) -- (483,128) ;
\draw  [fill={rgb, 255:red, 255; green, 0; blue, 0 }  ,fill opacity=1 ] (144.75,138) .. controls (144.75,136.21) and (146.21,134.75) .. (148,134.75) .. controls (149.79,134.75) and (151.25,136.21) .. (151.25,138) .. controls (151.25,139.79) and (149.79,141.25) .. (148,141.25) .. controls (146.21,141.25) and (144.75,139.79) .. (144.75,138) -- cycle ;
\draw  [fill={rgb, 255:red, 255; green, 0; blue, 0 }  ,fill opacity=1 ] (202.75,97) .. controls (202.75,95.21) and (204.21,93.75) .. (206,93.75) .. controls (207.79,93.75) and (209.25,95.21) .. (209.25,97) .. controls (209.25,98.79) and (207.79,100.25) .. (206,100.25) .. controls (204.21,100.25) and (202.75,98.79) .. (202.75,97) -- cycle ;
\draw  [fill={rgb, 255:red, 255; green, 0; blue, 0 }  ,fill opacity=1 ] (265.75,113) .. controls (265.75,111.21) and (267.21,109.75) .. (269,109.75) .. controls (270.79,109.75) and (272.25,111.21) .. (272.25,113) .. controls (272.25,114.79) and (270.79,116.25) .. (269,116.25) .. controls (267.21,116.25) and (265.75,114.79) .. (265.75,113) -- cycle ;
\draw  [fill={rgb, 255:red, 255; green, 0; blue, 0 }  ,fill opacity=1 ] (301.75,105) .. controls (301.75,103.21) and (303.21,101.75) .. (305,101.75) .. controls (306.79,101.75) and (308.25,103.21) .. (308.25,105) .. controls (308.25,106.79) and (306.79,108.25) .. (305,108.25) .. controls (303.21,108.25) and (301.75,106.79) .. (301.75,105) -- cycle ;
\draw  [fill={rgb, 255:red, 255; green, 0; blue, 0 }  ,fill opacity=1 ] (380.75,126) .. controls (380.75,124.21) and (382.21,122.75) .. (384,122.75) .. controls (385.79,122.75) and (387.25,124.21) .. (387.25,126) .. controls (387.25,127.79) and (385.79,129.25) .. (384,129.25) .. controls (382.21,129.25) and (380.75,127.79) .. (380.75,126) -- cycle ;
\draw  [fill={rgb, 255:red, 255; green, 0; blue, 0 }  ,fill opacity=1 ] (431.75,167) .. controls (431.75,165.21) and (433.21,163.75) .. (435,163.75) .. controls (436.79,163.75) and (438.25,165.21) .. (438.25,167) .. controls (438.25,168.79) and (436.79,170.25) .. (435,170.25) .. controls (433.21,170.25) and (431.75,168.79) .. (431.75,167) -- cycle ;
\draw  [fill={rgb, 255:red, 0; green, 0; blue, 0 }  ,fill opacity=1 ] (479.75,128) .. controls (479.75,126.21) and (481.21,124.75) .. (483,124.75) .. controls (484.79,124.75) and (486.25,126.21) .. (486.25,128) .. controls (486.25,129.79) and (484.79,131.25) .. (483,131.25) .. controls (481.21,131.25) and (479.75,129.79) .. (479.75,128) -- cycle ;
\draw    (148,138) -- (111.5,90.5) ;
\draw  [fill={rgb, 255:red, 0; green, 0; blue, 0 }  ,fill opacity=1 ] (108.25,90.5) .. controls (108.25,88.71) and (109.71,87.25) .. (111.5,87.25) .. controls (113.29,87.25) and (114.75,88.71) .. (114.75,90.5) .. controls (114.75,92.29) and (113.29,93.75) .. (111.5,93.75) .. controls (109.71,93.75) and (108.25,92.29) .. (108.25,90.5) -- cycle ;
\draw  [fill={rgb, 255:red, 0; green, 0; blue, 255 }  ,fill opacity=1 ] (108.25,90.5) .. controls (108.25,88.71) and (109.71,87.25) .. (111.5,87.25) .. controls (113.29,87.25) and (114.75,88.71) .. (114.75,90.5) .. controls (114.75,92.29) and (113.29,93.75) .. (111.5,93.75) .. controls (109.71,93.75) and (108.25,92.29) .. (108.25,90.5) -- cycle ;
\draw  [fill={rgb, 255:red, 0; green, 0; blue, 255 }  ,fill opacity=1 ] (143.75,102) .. controls (143.75,100.21) and (145.21,98.75) .. (147,98.75) .. controls (148.79,98.75) and (150.25,100.21) .. (150.25,102) .. controls (150.25,103.79) and (148.79,105.25) .. (147,105.25) .. controls (145.21,105.25) and (143.75,103.79) .. (143.75,102) -- cycle ;
\draw  [fill={rgb, 255:red, 0; green, 0; blue, 255 }  ,fill opacity=1 ] (204.25,81.5) .. controls (204.25,79.71) and (205.71,78.25) .. (207.5,78.25) .. controls (209.29,78.25) and (210.75,79.71) .. (210.75,81.5) .. controls (210.75,83.29) and (209.29,84.75) .. (207.5,84.75) .. controls (205.71,84.75) and (204.25,83.29) .. (204.25,81.5) -- cycle ;
\draw  [fill={rgb, 255:red, 0; green, 0; blue, 255 }  ,fill opacity=1 ] (277.75,60.5) .. controls (277.75,58.71) and (279.21,57.25) .. (281,57.25) .. controls (282.79,57.25) and (284.25,58.71) .. (284.25,60.5) .. controls (284.25,62.29) and (282.79,63.75) .. (281,63.75) .. controls (279.21,63.75) and (277.75,62.29) .. (277.75,60.5) -- cycle ;
\draw  [fill={rgb, 255:red, 0; green, 0; blue, 255 }  ,fill opacity=1 ] (385.25,63.5) .. controls (385.25,61.71) and (386.71,60.25) .. (388.5,60.25) .. controls (390.29,60.25) and (391.75,61.71) .. (391.75,63.5) .. controls (391.75,65.29) and (390.29,66.75) .. (388.5,66.75) .. controls (386.71,66.75) and (385.25,65.29) .. (385.25,63.5) -- cycle ;
\draw  [fill={rgb, 255:red, 0; green, 0; blue, 255 }  ,fill opacity=1 ] (435.75,90.5) .. controls (435.75,88.71) and (437.21,87.25) .. (439,87.25) .. controls (440.79,87.25) and (442.25,88.71) .. (442.25,90.5) .. controls (442.25,92.29) and (440.79,93.75) .. (439,93.75) .. controls (437.21,93.75) and (435.75,92.29) .. (435.75,90.5) -- cycle ;
\draw [color={rgb, 255:red, 30; green, 120; blue, 30 }  ,draw opacity=1 ]   (111.5,90.5) -- (138.5,81) -- (147,102) -- (177.5,105) -- (207.5,81.5) -- (223,57.5) -- (247.5,45.5) -- (281,60.5) -- (312.5,48.5) -- (345,45.5) -- (374.5,54) -- (388.5,63.5) -- (417.5,60.5) -- (439,90.5) -- (468.5,103.5) -- (483,128) ;
\draw  [fill={rgb, 255:red, 0; green, 0; blue, 255 }  ,fill opacity=1 ] (479.75,128) .. controls (479.75,126.21) and (481.21,124.75) .. (483,124.75) .. controls (484.79,124.75) and (486.25,126.21) .. (486.25,128) .. controls (486.25,129.79) and (484.79,131.25) .. (483,131.25) .. controls (481.21,131.25) and (479.75,129.79) .. (479.75,128) -- cycle ;
\draw [color={rgb, 255:red, 128; green, 128; blue, 255 }  ,draw opacity=1 ]   (147,102) -- (148,138) ;
\draw [color={rgb, 255:red, 128; green, 128; blue, 255 }  ,draw opacity=1 ]   (207.5,81.5) -- (206,97) ;
\draw [color={rgb, 255:red, 128; green, 128; blue, 255 }  ,draw opacity=1 ]   (281,60.5) -- (269,113) ;
\draw [color={rgb, 255:red, 128; green, 128; blue, 255 }  ,draw opacity=1 ]   (281,60.5) -- (305,105) ;
\draw [color={rgb, 255:red, 128; green, 128; blue, 255 }  ,draw opacity=1 ]   (388.5,63.5) -- (384,126) ;
\draw [color={rgb, 255:red, 128; green, 128; blue, 255 }  ,draw opacity=1 ]   (439,90.5) -- (435,167) ;

\draw (332,120.9) node [anchor=north west][inner sep=0.75pt]    {$\ell _{i}$};
\draw (374,128.4) node [anchor=north west][inner sep=0.75pt]  [font=\small,color={rgb, 255:red, 255; green, 0; blue, 0 }  ,opacity=1 ]  {$b_{i}$};
\draw (299.5,107.9) node [anchor=north west][inner sep=0.75pt]  [font=\small,color={rgb, 255:red, 255; green, 0; blue, 0 }  ,opacity=1 ]  {$a_{i}$};
\draw (268.5,33.4) node [anchor=north west][inner sep=0.75pt]  [font=\small,color={rgb, 255:red, 0; green, 0; blue, 255 }  ,opacity=1 ]  {$\phi ( a_{i})$};
\draw (322.5,17.4) node [anchor=north west][inner sep=0.75pt]  [color={rgb, 255:red, 30; green, 120; blue, 30 }  ,opacity=1 ]  {$\phi ( \ell _{i})$};
\draw (380,40.4) node [anchor=north west][inner sep=0.75pt]  [font=\small,color={rgb, 255:red, 0; green, 0; blue, 255 }  ,opacity=1 ]  {$\phi ( b_{i})$};
\draw (339,68.4) node [anchor=north west][inner sep=0.75pt]  [color={rgb, 255:red, 255; green, 255; blue, 255 }  ,opacity=1 ]  {$\sigma _{i}$};
\draw (388,88.9) node [anchor=north west][inner sep=0.75pt]  [font=\small,color={rgb, 255:red, 255; green, 0; blue, 0 }  ,opacity=1 ]  {$\textcolor[rgb]{0.5,0.5,1}{\psi ( b_{i})}$};
\draw (291.5,67.4) node [anchor=north west][inner sep=0.75pt]  [font=\small,color={rgb, 255:red, 255; green, 0; blue, 0 }  ,opacity=1 ]  {$\textcolor[rgb]{0.5,0.5,1}{\psi }\textcolor[rgb]{0.5,0.5,1}{( a}\textcolor[rgb]{0.5,0.5,1}{_{i}}\textcolor[rgb]{0.5,0.5,1}{)}$};

\end{tikzpicture}
        \caption{Construction of the approximation $c' = \phi(c) \in C_1(\mathring{G}^n_M(r))$ of
        $c \in C_1(M)$, and $\sigma \in C_2(M)$
        such that $\partial \sigma = c' - c$.}
        \label{fig:approximation}
    \end{figure}

    Since we assumed that
    $\diam(\ell_i) \le 2r$,
    on the event $A_{Q,\rho}(r)$,
    the triangle inequality gives
    \begin{align*}
        d_M(\phi(a_i), \phi(b_i))
        &\le d_M(\phi(a_i),a_i)
        + d_M(a_i, b_i)
        + d_M(b_i, \phi(b_i)) \\
        &\le 2 Q r|\log r| + 2r,
    \end{align*}
    which implies the graph distance bound
    $d_G(\phi(a_i),\phi(b_i))
    \le 2\rho(Q |\log r| + 1)$.
    In particular
    each $x \in |\phi(\ell_i)|$
    has
    \[
        \min(d_M(x, \phi(a_i)), d_M(x, \phi(b_i)) \le 2\rho(Q |\log r| + 1)r.
    \]
    Since
    \[
        \max( d_M(a_i, \phi(a_i)),
        d_M(b_i, \phi(b_i))
        \le Qr|\log r|
    \]
    and since $\diam(\ell_i) \le 2r$,
    we overall have 
    \[
        \sup \{ d_M(x,y) : x \in |\ell_i|, y \in |\phi(\ell_i)| \}
        \le
        [(2 \rho + 1)Q |\log r| + 3]r.
    \]
    Since $\psi(a_i)$
    and $\psi(b_i)$
    are $d_M$-geodesics,
    we also have the bound
    \[
        \sup \{ d_M(x,y) : x \in |\psi(a_i)| \cup |\psi(b_i)|, 
        y \in |\ell_i| \}
        \le (Q|\log r| + 2)r.
    \]
    This all shows that
    $|z|$
    is contained in the metric ball
    $$B(a_i, 4Q\rho r|\log r|)
    := \{ p \in M : d_M(a_i, p) < 4Q\rho r|\log r|\}.$$
    
    Now 
    by our assumption
    on $r$ compared to the injectivity
    radius of $M$, the exponential map
    gives a diffeomorphism between
    $B(a_i, 4 Q \rho r|\log r|)$
    and
    $B(0,4 Q \rho r|\log r|) \subseteq T_{a_i}M$;
    in particular,
    $B(a_i, 4 Q \rho r|\log r|)$
    is contractible and
    therefore has trivial first
    homology.
    Thus $z_i \in Z_1(B(a_i, 4Q \rho r|\log r|)) = B_1(B(a_i, 4Q \rho r|\log r|))$ and there
    exists some 
    $\sigma_i \in C_2(B(a_i, 4Q \rho r|\log r|)) \subseteq C_2(M)$
    such that
    $\partial \sigma_i = z_i$.
    Note that by construction,
    all of $|\sigma_i|$
    lies within distance
    $4Q \rho r|\log r|$ of $a_i \in |c|$.

    Let us define 
    \[
       \sigma = \sum_{i=1}^N t_i \sigma_i.
    \]
    Again, by construction,
    for each $x' \in |\sigma|$,
    there exists $x \in |c|$
    such that
    $d_M(x,x') \le 4Q \rho r|\log r|$,
    as desired.

    We also compute
    \begin{align*}
        \partial \sigma &=
        \sum_{i=1}^N \partial t_i\sigma_i 
        = \sum_{i=1}^N t_i z_i \\
        &= \sum_{i=1}^N \phi(t_i\ell_i) - t_i\ell_i - \psi(\partial t_i\ell_i) \\
        &= \phi(c) - c - \psi(\partial c) \\
        &= c' - c.
    \end{align*}
    In the last line,
    we used that by assumption
    $\partial c \in C_0(\mathring{G}^n_M(r))$,
    and therefore
    $\psi(\partial c) = 0$.
\end{proof}

\subsection{Constructing giant 1-cycles}
Surjectivity, or ``emergence of giant $1$-cycles,'' now follows
almost immediately from
our approximation lemma.

\begin{prop} \label[prop]{prop:surj}
For some $1 \le \rho, Q < \infty$, let $r_0 > 0$ satisfy \eqref{r small enough}.

Then, for any $0 < r \le r_0$,
on the event
$A_{Q,\rho}(r)$,
the natural map
$Z_1(G^n_M(r)) \to H_1(M)$
is surjective.
\end{prop}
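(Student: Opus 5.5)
The plan is to take an arbitrary class $[z] \in H_1(M)$, represent it by a singular $1$-cycle whose simplices are all short, and then push that cycle into the giant component with \Cref{approximate 1-chains}. The $2$-chain supplied by that lemma shows the homology class is unchanged, so $[z]$ lies in the image of $Z_1(G^n_M(r))$.

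\emph{Step 1: making simplices short.} Fix a class in $H_1(M)$ and pick a representative $z \in Z_1(M)$. Apply barycentric subdivision (Hatcher, Prop.\ 2.21) enough times that every singular $1$-simplex has diameter at most $2r$. Subdivision $S$ is chain homotopic to the identity, so $S^k z - z \in B_1(M)$. Hence $z_0 := S^k z$ is homologous to $z$, is still a cycle, and every simplex $\ell_i$ of $z_0$ satisfies $\diam(|\ell_i|) \le 2r$. Compactness of $\Delta_1$ gives uniform continuity, so finitely many subdivisions suffice.

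\emph{Step 2: moving the endpoints into the giant.} Since $\partial z_0 = 0$, we have $\partial z_0 \in C_0(\mathring{G}^n_M(r))$ trivially. So \Cref{approximate 1-chains} applies directly to $c = z_0$. On $A_{Q,\rho}(n,r)$ with $r \le r_0$ satisfying \eqref{r small enough}, it yields $c' \in C_1(\mathring{G}^n_M(r))$ and $\sigma \in C_2(M)$ with $\partial\sigma = c' - z_0$. Then $\partial c' = \partial z_0 = 0$, so $c' \in Z_1(\mathring{G}^n_M(r)) \subseteq Z_1(G^n_M(r))$. Moreover $c' - z_0 \in B_1(M)$, so the image of $c'$ under the natural map is $[z_0] = [z]$. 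This proves surjectivity.

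\emph{Main point of care.} The only subtle step is the compatibility of identifications. Graph edges are included into $C_1(M)$ as geodesic singular simplices; this is well defined because edge lengths $r$ are below $\mathrm{inj}(M)$. The natural map $Z_1(G^n_M(r)) \to H_1(M)$ is exactly this inclusion followed by the quotient, so the chain $c'$ from the lemma, regarded as a singular chain, is what gets mapped. Apart from this, the argument is immediate from the approximation lemma, which already handles all the geometric estimates.
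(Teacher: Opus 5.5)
Your proposal is correct and matches the paper's proof essentially step for step. Like the paper, you subdivide a representative cycle so that all of its simplices are short, and apply \Cref{approximate 1-chains} using $\partial z = 0$. The $2$-chain that lemma supplies then shows the resulting graph cycle represents the same class. The paper subdivides down to diameter $\le r$ rather than your $\le 2r$, which makes no difference since the lemma only requires $2r$.
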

\begin{proof}
Suppose $r_0$ satisfies \eqref{r small enough} and $0 < r \le r_0$.

Let $h \in H_1(M)$.
Choose $z \in Z_1(M)$
such that
$h= [z]$
and for all $\ell \in z$
we have $\diam(|\ell|) \le r.$\footnote{This is possible
by e.g. barycentric subdivision
(see e.g. Section 2.1,
proof of Proposition 2.21 in
Hatcher \cite{Hatcher}).}

Since $\partial z = 0 \in C_0(G^n_M(r))$,
the hypotheses of \Cref{approximate 1-chains} hold,
and so on the event $A_{Q,\rho}(r)$,
there exist $z' \in C_1(G^n_M(r))$
and $\sigma \in C_2(M)$
such that $\partial \sigma = z' - z$.
Note that in fact $z' \in Z_1(G^n_M(r))$, since
\[
    \partial z' = \partial(\partial \sigma + z)
    = \partial^2 \sigma + \partial z= 0.
\]

Moreover, $[z'] = [z] = h$.
Thus every $h \in H_1(M)$
has some representative
$z' \in Z_1(G^n_M(r))$,
as desired.
\end{proof}

\section{Decomposing cycles with respect to a cover}
We come to the most technically involved part of our argument.
We show that, if an open cover of $M$ has large enough \emph{Lebesgue number} and small enough \emph{ply},
then any $1$-cycle $z \in Z_1(\mathring{G}^n_M(r))$
which maps to a boundary
in $M$
can be decomposed as a sum
of $1$-cycles $z_1 + \cdots + z_N$
where each $|z_i|$ is completely
contained in some element of the cover.
Later we will show
that we can construct suitable
covers where each element
has small diameter,
and this will give us
a decomposition of such cycles into ``small'' cycles,
establishing our characterization
of the kernel of $Z_1(G^n_M(r)) \to H_1(M)$.

Note that throughout this section we will use the notation $\bar{B}(x,s)$
to denote the \emph{closed} metric ball
\[
    \bar{B}(x,s) := \{y \in M: d_M(x,y) \le s \}.
\]

\begin{lemma} \label[lemma]{decomposition}
    Let $\mathcal{O}$
    be a finite open
    cover of $M$.
    
    Let $L > 0$
    be such that
    for any $x \in M$
    there exists some $U = U(x) \in \mathcal{O}$ such that
    \[
        \bar{B}(x,L) \subseteq U.
    \]

    Moreover, for some $Q, \rho \ge 1$, let
    $K = 4Q\rho r|\log r|$, and suppose that $\Delta$ satisfies
    \[
        \sup_{x \in M}
        \#
        \{U \in \mathcal{O}:B(x,(\Delta+1)K) \cap U \ne \emptyset \}
        \le 
        \Delta.
    \]

    If $L \ge (\Delta + 1)K + 2r$
    and $r$ satisfies
    \eqref{r small enough},
    then, on $A_{Q,\rho}(r)$, the following holds:

    For any $z \in Z_1(\mathring{G}^n_M(r))$
    such that $z = \partial \sigma$
    for some $\sigma \in C_2(M)$,
    we have a decomposition
    \[
        z = \sum_{U \in \mathcal{O}} z_U,
    \]
    where for each $U \in \mathcal{O}$ we have 
    $z_U \in Z_1(\mathring{G}^n_M(r))$
    and $|z_U| \subseteq U$.
\end{lemma}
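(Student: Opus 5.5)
The plan is to avoid an inductive peeling-off argument and instead push the whole $2$-chain $\sigma$ through the approximation map $\phi$ from the proof of \Cref{approximate 1-chains}. That map is a chain map $C_\ast(M)\to C_\ast(\mathring{G}^n_M(r))$ in degrees $0,1$. It fixes every vertex of $\mathring{G}^n_M(r)$. It also fixes every edge of $\mathring{G}^n_M(r)$: if $e=(v,w)$ is such an edge, the graph geodesic from $\phi(v)=v$ to $\phi(w)=w$ is the single edge $e$, since it is the only edge-path of length $1$. Hence $\phi(z)=z$ for $z\in Z_1(\mathring{G}^n_M(r))$.

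\emph{Step 1: make $\sigma$ fine.} Let $S$ be barycentric subdivision and $T$ the standard chain homotopy with $S-\mathrm{id}=\partial T+T\partial$, as in Hatcher, Prop.~2.21. Here $T$ sends a singular simplex to a chain supported on its image.
\begin{itemize}
\item Iterate $S$ $k$ times, with $k$ large enough that every simplex of $S^k\sigma$ has diameter $\le r$.
\item Let $T_k$ be the corresponding homotopy, so that $S^k z-z=\partial T_k z$ (using $\partial z=0$).
\item Set $\sigma' := S^k\sigma - T_k z$. Then $\partial\sigma' = S^k\partial\sigma-\partial T_kz = z$.
\end{itemize}
The simplices of $T_kz$ are supported on images of edges of $z$, each of diameter $\le r$. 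So every $2$-simplex of $\sigma'$ has diameter $\le r$, and hence every $1$-simplex in $\partial s$ for $s\in\sigma'$ has diameter $\le r\le 2r$.

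\emph{Step 2: partition $\sigma'$ by the cover.} For each $2$-simplex $s\in\sigma'$:
\begin{itemize}
\item pick a point $x_s\in|s|$ and the element $U(x_s)\in\mathcal{O}$ with $\bar B(x_s,L)\subseteq U(x_s)$;
\item assign $s$ to $U(x_s)$.
\end{itemize}
Let $\sigma_U$ be the sum of the terms $t_s(\sigma')s$ with $s$ assigned to $U$, and put $w_U := \partial\sigma_U$. Then $\sum_U w_U = z$ and $\partial w_U = 0$. Moreover every point of $|w_U|$ lies within distance $r$ of some $x_s$ with $\bar B(x_s,L)\subseteq U$.

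\emph{Step 3: push into the giant.} Define $z_U := \phi(w_U)\in C_1(\mathring{G}^n_M(r))$.
\begin{itemize}
\item Since $\phi$ is a chain map, $\partial z_U=\phi(\partial w_U)=0$, so $z_U\in Z_1(\mathring{G}^n_M(r))$.
\item By linearity and Step 1, $\sum_U z_U=\phi(z)=z$.
\item For support, the estimates in the proof of \Cref{approximate 1-chains}, valid on $A_{Q,\rho}(r)$ for simplices of diameter $\le 2r$, show that $|\phi(\ell)|$ lies within $[(2\rho+1)Q|\log r|+3]r\le K$ of $|\ell|$. Hence $|z_U|$ lies within $K+r$ of some $x_s$. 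Since $K+r<(\Delta+1)K+2r\le L$, we get $|z_U|\subseteq \bar B(x_s,L)\subseteq U$.
\end{itemize}
This would give the decomposition without using the ply bound $\Delta$ beyond the inequality $L\ge K+2r$. The ply hypothesis would only matter in an iterative version where errors accumulate one cover element at a time.

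The main obstacle is Step 1, specifically checking that the subdivision bookkeeping keeps $\partial\sigma'$ equal to $z$ exactly, rather than merely homologous to it. I handle this with the correction term $-T_kz$, whose simplices are degenerate-like chains living on the edges of $z$ and so remain small. A secondary point to check carefully is that $\phi$ really is well defined and a chain map on singular $1$-simplices with coinciding endpoints, and that tie-breaking does not spoil $\phi(e)=e$ for graph edges. If the direct argument fails somewhere, I would fall back to the inductive scheme sketched in the proof outline: peel off one $U$ at a time, applying \Cref{approximate 1-chains}, and use $\Delta$ to bound the accumulated $K$-neighborhood growth by $\Delta K$.
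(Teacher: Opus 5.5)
Your proof is correct, and it takes a genuinely different and shorter route than the paper's.

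The paper proves \Cref{decomposition} by peeling off one cover element at a time via \Cref{induction}. At each step it keeps only the simplices $\tilde\sigma$ of the current filling that are not comfortably inside $W$. It then re-approximates the newly exposed boundary piece $\tilde\pi$ using \Cref{approximate 1-chains}, and passes the modified filling $\tilde\sigma+\hat\sigma$ to the next step. Because the displaced pieces $\pi'$ and $\hat\sigma$ feed into later steps, the $K$-errors can stack up. The counter $d(x)$, the ply bound $\Delta$, and the requirement $L\ge(\Delta+1)K+2r$ exist precisely to control this accumulation.

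You instead apply one globally defined linear chain map $\phi$ to all the pieces $w_U=\partial\sigma_U$ at once. A $1$-simplex shared by two pieces has the same image in both, so the cancellation in $\sum_U w_U=z$ is preserved automatically, and $\phi(z)=z$ finishes the argument. Each point is moved only once, by at most $K$, which is why you only need $L\ge K+r$ and never use $\Delta$. Your Step 1, $\sigma'=S^k\sigma-T_kz$, is also a cleaner replacement for the modified subdivision in \Cref{sec:subdivision}.

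Your argument in fact gives more than the lemma. Taking each $2$-simplex as its own piece,
\[
z=\phi(\partial\sigma')=\sum_s t_s(\sigma')\,\phi(\partial s)
\]
is a sum of cycles in $\mathring{G}^n_M(r)$ of diameter at most $2K+2r$. So on $A_{Q,\rho}(n,r)$ you could skip the cover and \Cref{good cover exists} entirely. This would yield \Cref{choice of Q'} with $Q'$ of order $Q\rho$ rather than $[(13e)^d+1]16Q\rho$.

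The one advantage of the paper's scheme is that it uses \Cref{approximate 1-chains} purely as a black box. Your route genuinely needs the explicit linear map $\phi$ from inside that proof. With only the existence statement, independently chosen approximations of the $w_U$ would give $\sum_U z_U-z\in B_1(M)$, not $\sum_U z_U=z$.
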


The rough idea behind the proof of \Cref{decomposition}
is as follows.
Going one cover element $U$ at a time, we find 
$z_U \in Z_1(\mathring{G}^n_M(r))$
with $|z_U| \subseteq U$ and $|z - z_U|$
contained in the union of the other elements;
replacing $z$ by $z' = z - z_U$,
replacing $U$ by another 
cover element $U'$, and applying the
same argument inductively, we expect to
get the desired decomposition.

The subtlety is as follows. The cycles $z_U$
that we construct lie within $U$
but necessarily also have parts
which lie near $U^c$; thus 
we must be careful to allow some ``wiggle room''
so that the cycles we construct
using \Cref{approximate 1-chains}
do not wander outside of $U$.
The function of the Lebesgue number
$L$ of $\mathcal{O}$ is to allow such 
wiggle room.

On the other hand, in order
to perform our induction, we want to discard
each open set $U$ after we have constructed
$z_U$; and once the collection of open sets
no longer covers $M$, the Lebesgue number
as na\"{i}vely defined is not positive.
By being careful in our construction,
we can keep track of our ``wiggle room''
through our induction. Each
time we construct a $z_U$,
the ``wiggle room'' for the remaining
cycle may be slightly diminished,
\emph{but only in that part which intersects} $U$.
This is where the \emph{ply}\footnote{The usual definition of the \emph{ply} of a cover is simply the maximal number of distinct cover elements a single point of $M$ can lie in. Here we have to control something slightly more complicated, since the place where error is being introduced moves around slightly with each modification. Our condition on $\Delta$
essentially says that we cannot have a sequence of $\Delta + 1$ modifications, all of which lie within distance $K$ of one another, and all of which  take place within distinct cover elements.} 
$\Delta$ comes in;
if each part of $z$ lies near a bounded number
(at most $\Delta$)
of different cover elements, this gives
us a bound on the total error accumulated
through our entire inductive construction,
and ensures that no component of the decomposition
escapes the desired cover element.

The inductive step of the proof of \Cref{decomposition} is given by the following lemma.
Below, one should think of $\mathcal{D}$
as the collection of those cover elements $U$ for which we
have already constructed $z_U$.
In particular, our given cycle $z$ should be covered by $\mathcal{O}\setminus \mathcal{D}$, but parts of $z$ which are near
$U \in \mathcal{D}$ may have already accumulated some error
(at most $d(x)$ many times) 
and therefore have less ``wiggle room''
inside $\mathcal{O}\setminus \mathcal{D}$.
The lemma then constructs $z_W = z-z'$
for some cover element $W \in \mathcal{O} \setminus \mathcal{D}$.
\begin{lemma} \label[lemma]{induction}
    Let $\mathcal{O}, L, \Delta$
    be as in the statement
    of \Cref{decomposition}, and
    assume $L \ge (\Delta + 1)4Q\rho r|\log r| + 2r$,
    $r$ satisfies
    \eqref{r small enough}, and
    the event $A_{Q,\rho}(r)$ holds.
    To reduce clutter, 
    write $K = 4Q \rho r|\log r|$.

    Suppose we are given $\mathcal{D} \subseteq \mathcal{O}$,
    $z \in Z_1(\mathring{G}^n_M(r))$,
    and $\sigma \in C_2(M)$
    such that the following holds:
    \begin{enumerate}
        \item $\partial \sigma = z$.
        \item For any $x \in |\sigma|$,
        there exists
        $V \in \mathcal{O} \setminus \mathcal{D}$
        such that 
        \[
            \bar{B}(x, L - d(x)K)
            \subseteq V,
        \]
        where we define
        \[
            d(x) := 
            \sup \left\{ m : 
            \begin{array}{c}
            \exists U_1,...,U_m \in \mathcal{D} \mbox{ distinct, } x_1,...,x_m, x_{m+1}\in M \\
            \mbox{
            with } x_{m+1}=x, \mbox{ such that for all } 1 \le i \le m, \\
            x_i \in U_i
            \mbox{ and }
            d_M(x_i, x_{i+1}) \le K 
            \end{array}
            \right\}.
        \]
    \end{enumerate}

    Then, for any $W \in \mathcal{O} \setminus \mathcal{D}$,
    there exist
    $z' \in Z_1(\mathring{G}^n_M(r))$ and
    $\sigma' \in C_2(M)$
    such that,
    setting 
    \[
        \mathcal{D}' = \mathcal{D} \cup \{W\},
    \]
    we have:
    \begin{enumerate}
        \item $|z - z'| \subseteq W$.
        \item $\partial \sigma' = z'$.
        \item For any $x \in |\sigma'|$,
        there exists
        $V' \in \mathcal{O} \setminus \mathcal{D'}$
        such that 
        \[
            \bar{B}(x, L - d'(x)K)
            \subseteq V',
        \]
        where we define
        \[
            d'(x) := 
            \sup \left\{ m : 
            \begin{array}{c}
            \exists U_1,...,U_m \in \mathcal{D'} \mbox{ distinct, } x_1,...,x_m, x_{m+1}\in M \\
            \mbox{
            with } x_{m+1}=x, \mbox{ such that for all } 1 \le i \le m, \\
            x_i \in U_i
            \mbox{ and }
            d_M(x_i, x_{i+1}) \le K
            \end{array}
            \right\}.
        \]
    \end{enumerate}
\end{lemma}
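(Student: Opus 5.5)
The plan is to cut $\sigma$ along $W$. We keep the $2$-simplices of (a refinement of) $\sigma$ that lie in $W$, take their boundary, and repair the non-graph part of that boundary with \Cref{approximate 1-chains}. The repair is a $2$-chain $\tau$, and the $K$-error it introduces is exactly what the increment from $d$ to $d'$ is meant to absorb.

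\emph{Step 1: make $\sigma$ fine without changing its boundary.} Let $S$ be barycentric subdivision and $T$ the chain homotopy with $S-\mathrm{id}=\partial T+T\partial$. Put
\[
\tilde\sigma=S^k\sigma+\sum_{j<k}TS^jz .
\]
Since $\partial z=0$, we get $\partial\tilde\sigma=z$.

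For $k$ large, every $2$-simplex of $S^k\sigma$ has diameter $\le r$. Every $2$-simplex of $TS^jz$ has image inside a single edge of $z$, and edges have length $\le r$, so these have diameter $\le r$ too. Also $|\tilde\sigma|\subseteq|\sigma|\cup|z|=|\sigma|$. Hence hypothesis (2) still holds for $\tilde\sigma$.

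\emph{Step 2: select the simplices to cut out.} Let $\sigma_W$ consist of those simplices $s\in\tilde\sigma$, with their coefficients, that contain a point $x$ with $\bar B(x,L-d(x)K)\subseteq W$.

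First I will check that $d(x)\le\Delta-1$ whenever such a $W\notin\mathcal D$ exists. A chain $x_1,\dots,x_m,x$ of $K$-steps through distinct $U_i\in\mathcal D$ stays inside $B(x,mK)$. Together with $W$, the ball then meets $m+1$ distinct cover elements, so the ply hypothesis bounds $m+1\le\Delta$. (Some care is needed to keep $m\le\Delta$ so the ball radius stays $\le(\Delta+1)K$.)

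Consequently $L-d(x)K\ge 2K+2r$. So every $s\in\sigma_W$ satisfies $|s|\subseteq W$, and in fact the $K$-neighbourhood of $|s|$ lies in $W$.

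Now split $\partial\sigma_W=z_{\rm in}+e$. Here $z_{\rm in}$ collects the terms that are edges of $z$, and $e$ collects the remaining edges, each of diameter $\le r$. These remaining edges are shared with simplices of $\tilde\sigma-\sigma_W$, or else they cancel.

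Since $\partial e=-\partial z_{\rm in}\in C_0(\mathring G^n_M(r))$, \Cref{approximate 1-chains} applies to $e$. It gives $e'\in C_1(\mathring G^n_M(r))$ and $\tau$ with $\partial\tau=e'-e$, and $|\tau|$ lies in the $K$-neighbourhood of $|e|$, hence in $W$.

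Set
\[
z_W=z_{\rm in}+e'=\partial(\sigma_W+\tau),\qquad z'=z-z_W,\qquad \sigma'=\tilde\sigma-\sigma_W-\tau .
\]
Then $\partial\sigma'=z'$, and $z'\in Z_1(\mathring G^n_M(r))$. Moreover $|z-z'|=|z_W|\subseteq W$. This gives conclusions (1) and (2).

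\emph{Step 3: verify conclusion (3). This is the main obstacle.} There are two kinds of points $x\in|\sigma'|$.

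Case (a): $x$ lies in a simplex $s\notin\sigma_W$ and is at distance $>K$ from $W$. Then $d'(x)=d(x)$. The $V$ from hypothesis (2) cannot be $W$, since otherwise $s\in\sigma_W$. So $V\in\mathcal O\setminus\mathcal D'$ works.

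Case (b): $x$ lies within $K$ of $W$. This includes every point of $|\tau|$. Here I plan to pick a nearby witness point $y$: for $x\in|\tau|$, a point $y$ of a simplex $s'\notin\sigma_W$ adjacent to $e$. This $y$ has $d_M(x,y)\le K+r$, and its $V$ is not $W$.

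Since $y\in W$, prepending $W$ to any $\mathcal D$-chain ending at $x$ shows $d'(x)\ge d(\cdot)+1$ relative to the right base point. The budget drop of $K$ from that increment should cover the displacement $d_M(x,y)$. The delicate part is the mismatch between $K+r$ and $K$: it arises from the extra $r$ of the simplex diameter and from comparing $d(y)$ with $d(x)$. I would first try to absorb it by applying the $d$-chain definition at a point of $|e|$ itself, which lies within $K$ of $x$. If that is insufficient, I would use the slack $2r$ built into $L\ge(\Delta+1)K+2r$.

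Points of $|\tilde\sigma-\sigma_W|$ near $W$ need the same treatment, using the fact that $x$ itself lies within $K$ of a point of $W$.
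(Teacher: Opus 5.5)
Your Steps 1--2 are sound and follow the paper's architecture. You cut $\sigma$ along $W$, repair the non-$z$ part of the boundary of the cut-out piece with \Cref{approximate 1-chains}, and move the repair $2$-chain into $\sigma'$. Your selection rule ($s\in\sigma_W$ iff $W$ is an admissible witness at some point of $|s|$) is the dual of the paper's, which removes the simplices all of whose points are $K$-deep in $W$. Your refinement $S^k\sigma\pm\sum_{j<k}TS^jz$ is a tidy substitute for the appendix's modified subdivision. With your convention $S-\mathrm{id}=\partial T+T\partial$, the correction term needs a minus sign. Your use of the ply bound to get $d(x)\le\Delta-1$, and hence a $(2K+r)$-margin around selected simplices, is correct. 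The care you mention is easy: a chain with $\Delta+1$ links would put $\Delta+1$ distinct cover elements inside $B(x_1,(\Delta+1)K)$, so $d\le\Delta$ always.

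The gap is Step 3, which is where the lemma's content lies, and you leave it unresolved. There are two concrete problems.

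First, in case (a) the claim $d'(x)=d(x)$ is false. A $\mathcal D'$-chain ending at $x$ may visit $W$ early and then continue through elements of $\mathcal D$, so $d'(x)>d(x)$ is possible even when $d_M(x,W)>K$. You only need $d'(x)\ge d(x)$, which is automatic from $\mathcal D\subseteq\mathcal D'$. Combined with your definition of $\sigma_W$, which forbids $V=W$ at \emph{every} point of an unselected simplex, this settles all of $|\tilde\sigma-\sigma_W|$ at once, near $W$ or not. So no further ``same treatment'' is needed there.

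Second, in case (b) the witness $y$ at distance $K+r$ genuinely fails, and the $2r$ slack in $L$ cannot rescue it. $L$ appears on both sides of the needed inclusion $\bar B(x,L-d'(x)K)\subseteq\bar B(y,L-d(y)K)$, which would require $d'(x)\ge d(y)+2$, and nothing provides that.

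The fix is the option you list first. For $x\in|\tau|$, \Cref{approximate 1-chains} gives $p\in|e|$ with $d_M(x,p)\le K$.
\begin{itemize}
\item The edge $\ell\ni p$ of $e$ is a face of a simplex of $\sigma_W$, so $p\in W$.
\item Since $t_\ell(\partial\tilde\sigma)=t_\ell(z)=0$, the edge $\ell$ is also a face of a simplex of $\tilde\sigma-\sigma_W$. Hence the $V_p\in\mathcal O\setminus\mathcal D$ given by hypothesis (2) at $p$ is not $W$.
\item Take a maximal $\mathcal D$-chain ending at $p$, then append $W\ni p$ and a final step of length at most $K$ to $x$. This gives $d'(x)\ge d(p)+1$.
\end{itemize}
Therefore $\bar B(x,L-d'(x)K)\subseteq\bar B(p,L-d(p)K)\subseteq V_p$. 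This is exactly the paper's argument. There the witness lies in $|\tilde\pi|\subseteq|\tilde\sigma|\cap|\sigma-\tilde\sigma|$, i.e.\ on the interface between the two pieces, so it is simultaneously in $W$ and carries a valid $V'\neq W$ at distance at most $K$.
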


Before proving \Cref{induction},
let us show how it implies \Cref{decomposition}.

\begin{proof}[Proof of \Cref{decomposition} given \Cref{induction}]
    Let $\mathcal{O}, L, \Delta$
    be as in the hypothesis
    of \Cref{decomposition}.
    Let $z \in Z_1(\mathring{G}^n_M(r))$
    and suppose $z = \partial \sigma$
    for some $\sigma \in C_2(M)$.

    For any $\tilde{z} \in Z_1(\mathring{G}^n_M(r),
    \tilde{\sigma} \in C_2(M)$, and any $\mathcal{D} \subseteq \mathcal{O}$, let us denote by
    $P(\tilde{z}, \tilde{\sigma}, \mathcal{D})$
    the statement that the hypotheses of 
    \Cref{induction} hold for $\tilde{z}, \tilde{\sigma}, \mathcal{D}$; that is,
    $P(\tilde{z}, \tilde{\sigma}, \mathcal{D})$
    is true if and only if we have
    \begin{enumerate}
        \item $\partial\tilde{\sigma} = \tilde{z}$.
        \item For any $x \in |\tilde{\sigma}|$,
            there exists $V \in \mathcal{O} \setminus \mathcal{D}$ such that
            \[
                \bar{B}(x, L - d(x) K)
                \subseteq V,
            \]
            where
            \[
            d(x) := 
            \sup \left\{ m : 
            \begin{array}{c}
            \exists U_1,...,U_m \in \mathcal{D} \mbox{ distinct}, x_1,...,x_m, x_{m+1}\in M \\
            \mbox{
            with } x_{m+1}=x, \mbox{ such that for all } 1 \le i \le m, \\
            x_i \in U_i
            \mbox{ and }
            d_M(x_i, x_{i+1}) \le K 
            \end{array}
            \right\}.
        \]
    \end{enumerate}

    Fix some arbitrary enumeration
    $\mathcal{O} = \{W_1,...,W_N\}$.
    We use induction to prove the following 
    statement for each $n=0,1,...,N$:
\begin{multline*}
    \mbox{For each } 1\le i \le n\mbox{, there exist }
    z_i \in Z_1(\mathring{G}^n_M(r))
    \mbox{ such that } |z_i| \subseteq W_i \\
    \mbox{ and there exists } \sigma_n \in C_2(M) 
    \mbox{ such that }
    \\
    P\left( z - \left(\sum_{i=1}^n z_i\right), \sigma_n, 
    \{W_1,...,W_n\}\right)
    \mbox{ holds}. \\
\end{multline*}

First, note that the base case $n=0$ holds, since
$P(z, \sigma, \emptyset)$ is
just the statement that $\partial \sigma = z$
(which is true by our construction of $\sigma$)
and that for each $x \in |\sigma|$
there exists some $V \in \mathcal{O}$
such that $\bar{B}(x,L) \subseteq V$,
which is true by our assumption on $L$.

Next, assume that our statement holds
for some $n \in \{0,...,N-1\}$.
Since $P\left(z - \left(\sum_{i=1}^n z_i\right), \sigma_n, 
\{W_1,...,W_n\}\right)$ holds, using
\Cref{induction} with $W = W_{n+1}$ gives us
$\sigma', z'$ such that
\[
    \left|z - \left(\sum_{i=1}^n z_i\right) - z'\right|
    \subseteq W_{n+1}
\]
and $P(\sigma', z', \{W_1,...,W_{n+1}\})$
holds.

Therefore, if we define 
\[
    z_{n+1} := z - \left(\sum_{i=1}^n z_i\right) - z',
\]
and $\sigma_n = \sigma'$, we have
established our desired statement for $n+1$.

By induction, we in particular obtain
$z_1,...,z_N \in Z_1(\mathring{G}^n_M(r))$
such that each $|z_i| \subseteq W_i$ and
$\sigma_N \in C_2(M)$ such that
\[
    P\left(z - \left(\sum_{i=1}^N z_i \right),
    \sigma_N, \mathcal{O}
    \right)
\]
holds.
But this implies that $|\sigma_N| = \emptyset$,
since for any $x \in |\sigma_N|$,
$\bar{B}(x, L-d(x) K)$
must be contained in some $V \in \mathcal{O} \setminus \mathcal{O} = \emptyset$, which is impossible.

Therefore, we have that
\[
    z - \left(\sum_{i=1}^N z_i \right)
    = \partial \sigma_N = 0,
\]
that is, $z = \sum_{i=1}^N z_i$
with each $z_i \in Z_1(\mathring{G}^n_M(r))$ 
and $|z_i| \subseteq W_i$, as desired.
\end{proof}

Now let us show \Cref{induction}.
The idea is to split up $\sigma \in C_2(M)$
into two parts: one consisting of the simplices contained comfortably within $W$,
the other consisting of the rest.
Taking the boundary of each of these parts
gives a decomposition of $z$ into a part contained
in comfortably within $W$
and a part contained in the other cover elements.
However, the cycles thus constructed
do not a priori lie in $Z_1(\mathring{G}^n_M(r))$;
we therefore use \Cref{approximate 1-chains}
to approximate the ``ideal'' decomposition,
taking care to account for the error in the inductive hypothesis.

\begin{proof}[Proof of \Cref{induction}]
    Suppose our stated hypotheses hold.
    We again write
    $K = 4Q\rho r|\log r|$
    to reduce clutter.

    First, replacing $\sigma \in C_2(M)$
    by an appropriately subdivided $2$-chain
    if necessary, we assume without loss of 
    generality
    that each $c \in \sigma$
    has $\diam(c) \le 2r$.\footnote{
    We can assume this by doing 
    a subdivision of $2$-simplices
    which is akin to barycentric subdivision, except that we do not
    subdivide $1$-simplices lying
    in $z$; this ensures that 
    our subdivided $2$-chain
    still has boundary equal to $z$.
    To
    see details of the subdivision procedure,
    refer to \Cref{sec:subdivision}
    of the appendix.}

    Let $W \in \mathcal{O} \setminus \mathcal{D}$,
    and write $\mathcal{D}' = \mathcal{D} \cup \{W\}$.
    Define $\tilde{\sigma} \in C_2(M)$ by
    \[
       \tilde{\sigma} :=
       \sum \left\{
       t_c(\sigma) c :
       \exists x \in |c| \mbox{ such that } \bar{B}(x,K) \not\subseteq W
       \right\}.
    \]

        \begin{figure}

\tikzset{every picture/.style={line width=0.75pt}} 

\begin{tikzpicture}[x=0.75pt,y=0.75pt,yscale=-1,xscale=1]

\draw [color={rgb, 255:red, 0; green, 255; blue, 0 }  ,draw opacity=1 ][fill={rgb, 255:red, 128; green, 128; blue, 255 }  ,fill opacity=1 ]   (147.5,488) -- (165.67,480.67) -- (177,492.33) -- (189.33,500.67) -- (196,502) -- (196.5,492) -- (182,489) ;
\draw [color={rgb, 255:red, 0; green, 255; blue, 0 }  ,draw opacity=1 ][fill={rgb, 255:red, 210; green, 210; blue, 220 }  ,fill opacity=1 ]   (210,175.5) -- (210,186.67) -- (208.33,199) -- (221.5,204) -- (221.5,186) ;
\draw [color={rgb, 255:red, 255; green, 0; blue, 0 }  ,draw opacity=1 ][fill={rgb, 255:red, 255; green, 200; blue, 200 }  ,fill opacity=1 ]   (221.5,204) -- (235.5,210.5) -- (256,212) -- (268.5,204.5) -- (285,202) -- (303.5,208) -- (322.5,204.5) -- (338.5,194.5) -- (354,189) -- (377.5,188.5) -- (384.5,168.5) -- (398,154.5) -- (398.5,132) -- (379,119) -- (374,103.5) -- (356.5,90.5) -- (336,86) -- (338,108.5) -- (351.5,130) -- (343,148.5) -- (325,163.5) -- (307.5,157) -- (304,137.5) -- (309.5,122.5) -- (304.5,100) -- (284,91.5) -- (262,94) -- (239.5,107) -- (213.5,101.5) -- (200,88.5) -- (183.5,95) -- (177,104) -- (157.5,110) -- (152.5,124.5) -- (155.5,142.5) -- (147.5,154) -- (182,155) -- (196.5,158) -- (196,168) -- (212,142.5) -- (225.5,157.5) -- (225,169.5) -- (210,175.5) -- (221.5,186) -- cycle ;
\draw [color={rgb, 255:red, 0; green, 255; blue, 0 }  ,draw opacity=1 ][fill={rgb, 255:red, 210; green, 210; blue, 220 }  ,fill opacity=1 ]   (147.5,154) -- (165.67,146.67) -- (177,158.33) -- (189.33,166.67) -- (196,168) -- (196.5,158) -- (182,155) ;
\draw  [color={rgb, 255:red, 128; green, 128; blue, 128 }  ,draw opacity=1 ] (100,95.5) .. controls (100,53.8) and (139.4,20) .. (188,20) .. controls (236.6,20) and (276,53.8) .. (276,95.5) .. controls (276,137.2) and (236.6,171) .. (188,171) .. controls (139.4,171) and (100,137.2) .. (100,95.5) -- cycle ;
\draw  [color={rgb, 255:red, 128; green, 128; blue, 128 }  ,draw opacity=1 ] (238,78.5) .. controls (238,36.8) and (277.4,3) .. (326,3) .. controls (374.6,3) and (414,36.8) .. (414,78.5) .. controls (414,120.2) and (374.6,154) .. (326,154) .. controls (277.4,154) and (238,120.2) .. (238,78.5) -- cycle ;
\draw  [color={rgb, 255:red, 128; green, 128; blue, 128 }  ,draw opacity=1 ] (318,175.5) .. controls (318,133.8) and (357.4,100) .. (406,100) .. controls (454.6,100) and (494,133.8) .. (494,175.5) .. controls (494,217.2) and (454.6,251) .. (406,251) .. controls (357.4,251) and (318,217.2) .. (318,175.5) -- cycle ;
\draw  [color={rgb, 255:red, 128; green, 128; blue, 128 }  ,draw opacity=1 ] (185,180.5) .. controls (185,138.8) and (224.4,105) .. (273,105) .. controls (321.6,105) and (361,138.8) .. (361,180.5) .. controls (361,222.2) and (321.6,256) .. (273,256) .. controls (224.4,256) and (185,222.2) .. (185,180.5) -- cycle ;
\draw  [color={rgb, 255:red, 128; green, 128; blue, 128 }  ,draw opacity=1 ] (63,204.5) .. controls (63,162.8) and (102.4,129) .. (151,129) .. controls (199.6,129) and (239,162.8) .. (239,204.5) .. controls (239,246.2) and (199.6,280) .. (151,280) .. controls (102.4,280) and (63,246.2) .. (63,204.5) -- cycle ;
\draw [color={rgb, 255:red, 255; green, 0; blue, 0 }  ,draw opacity=1 ]   (123,218.5) -- (137.5,222.5) -- (142.5,232.5) -- (159.5,237) -- (175.5,228.5) -- (186.5,229.5) -- (195.5,217.5) -- (211,213) -- (221.5,204) -- (235.5,210.5) -- (256,212) -- (268.5,204.5) -- (285,202) -- (303.5,208) -- (322.5,204.5) -- (338.5,194.5) -- (354,189) -- (377.5,188.5) -- (384.5,168.5) -- (398,154.5) -- (398.5,132) -- (379,119) -- (374,103.5) -- (356.5,90.5) -- (336,86) -- (338,108.5) -- (351.5,130) -- (343,148.5) -- (325,163.5) -- (307.5,157) -- (304,137.5) -- (309.5,122.5) -- (304.5,100) -- (284,91.5) -- (262,94) -- (239.5,107) -- (213.5,101.5) -- (200,88.5) -- (183.5,95) -- (177,104) -- (157.5,110) -- (152.5,124.5) -- (155.5,142.5) -- (147.5,154) -- (147,170) -- (137,174.5) -- (125.5,185.5) -- (118.5,205) -- cycle ;
\draw [color={rgb, 255:red, 255; green, 0; blue, 0 }  ,draw opacity=1 ]   (196,168) -- (210,175.5) -- (225,169.5) -- (225.5,157.5) -- (212,142.5) -- cycle ;
\draw [color={rgb, 255:red, 0; green, 0; blue, 0 }  ,draw opacity=1 ]   (147.5,154) -- (166.5,154.5) -- (182,155) -- (196.5,158) -- (196,168) ;
\draw [color={rgb, 255:red, 0; green, 0; blue, 0 }  ,draw opacity=1 ]   (210,175.5) -- (221.5,186) -- (221.5,204) ;
\draw [color={rgb, 255:red, 30; green, 120; blue, 30 }  ,draw opacity=1 ]   (147.5,154) -- (165.67,146.67) -- (177,158.33) -- (189.33,166.67) -- (196,168) ;
\draw [color={rgb, 255:red, 30; green, 120; blue, 30 }  ,draw opacity=1 ]   (210,175.5) -- (210,186.67) -- (208.33,199) -- (221.5,204) ;
\draw [color={rgb, 255:red, 250; green, 105; blue, 15 }  ,draw opacity=1 ][fill={rgb, 255:red, 128; green, 128; blue, 255 }  ,fill opacity=1 ]   (221.5,538) -- (235.5,544.5) -- (256,546) -- (268.5,538.5) -- (285,536) -- (303.5,542) -- (322.5,538.5) -- (338.5,528.5) -- (354,523) -- (377.5,522.5) -- (384.5,502.5) -- (398,488.5) -- (398.5,466) -- (379,453) -- (374,437.5) -- (356.5,424.5) -- (336,420) -- (338,442.5) -- (351.5,464) -- (343,482.5) -- (325,497.5) -- (307.5,491) -- (304,471.5) -- (309.5,456.5) -- (304.5,434) -- (284,425.5) -- (262,428) -- (239.5,441) -- (213.5,435.5) -- (200,422.5) -- (183.5,429) -- (177,438) -- (157.5,444) -- (152.5,458.5) -- (155.5,476.5) -- (147.5,488) -- (165.67,480.67) -- (177,492.33) -- (189.33,500.67) -- (196,502) -- (212,476.5) -- (225.5,491.5) -- (225,503.5) -- (210,509.5) -- (210,520.67) -- (208.33,533) -- cycle ;
\draw  [color={rgb, 255:red, 128; green, 128; blue, 128 }  ,draw opacity=1 ] (100,429.5) .. controls (100,387.8) and (139.4,354) .. (188,354) .. controls (236.6,354) and (276,387.8) .. (276,429.5) .. controls (276,471.2) and (236.6,505) .. (188,505) .. controls (139.4,505) and (100,471.2) .. (100,429.5) -- cycle ;
\draw  [color={rgb, 255:red, 128; green, 128; blue, 128 }  ,draw opacity=1 ] (238,412.5) .. controls (238,370.8) and (277.4,337) .. (326,337) .. controls (374.6,337) and (414,370.8) .. (414,412.5) .. controls (414,454.2) and (374.6,488) .. (326,488) .. controls (277.4,488) and (238,454.2) .. (238,412.5) -- cycle ;
\draw  [color={rgb, 255:red, 128; green, 128; blue, 128 }  ,draw opacity=1 ] (318,509.5) .. controls (318,467.8) and (357.4,434) .. (406,434) .. controls (454.6,434) and (494,467.8) .. (494,509.5) .. controls (494,551.2) and (454.6,585) .. (406,585) .. controls (357.4,585) and (318,551.2) .. (318,509.5) -- cycle ;
\draw  [color={rgb, 255:red, 128; green, 128; blue, 128 }  ,draw opacity=1 ] (185,514.5) .. controls (185,472.8) and (224.4,439) .. (273,439) .. controls (321.6,439) and (361,472.8) .. (361,514.5) .. controls (361,556.2) and (321.6,590) .. (273,590) .. controls (224.4,590) and (185,556.2) .. (185,514.5) -- cycle ;
\draw  [color={rgb, 255:red, 128; green, 128; blue, 128 }  ,draw opacity=1 ] (63,538.5) .. controls (63,496.8) and (102.4,463) .. (151,463) .. controls (199.6,463) and (239,496.8) .. (239,538.5) .. controls (239,580.2) and (199.6,614) .. (151,614) .. controls (102.4,614) and (63,580.2) .. (63,538.5) -- cycle ;
\draw [color={rgb, 255:red, 0; green, 0; blue, 255 }  ,draw opacity=1 ]   (123,552.5) -- (137.5,556.5) -- (142.5,566.5) -- (159.5,571) -- (175.5,562.5) -- (186.5,563.5) -- (195.5,551.5) -- (211,547) -- (221.5,538) -- (235.5,544.5) -- (256,546) -- (268.5,538.5) -- (285,536) -- (303.5,542) -- (322.5,538.5) -- (338.5,528.5) -- (354,523) -- (377.5,522.5) -- (384.5,502.5) -- (398,488.5) -- (398.5,466) -- (379,453) -- (374,437.5) -- (356.5,424.5) -- (336,420) -- (338,442.5) -- (351.5,464) -- (343,482.5) -- (325,497.5) -- (307.5,491) -- (304,471.5) -- (309.5,456.5) -- (304.5,434) -- (284,425.5) -- (262,428) -- (239.5,441) -- (213.5,435.5) -- (200,422.5) -- (183.5,429) -- (177,438) -- (157.5,444) -- (152.5,458.5) -- (155.5,476.5) -- (147.5,488) -- (147,504) -- (137,508.5) -- (125.5,519.5) -- (118.5,539) -- cycle ;
\draw [color={rgb, 255:red, 0; green, 0; blue, 255 }  ,draw opacity=1 ]   (196,502) -- (210,509.5) -- (225,503.5) -- (225.5,491.5) -- (212,476.5) -- cycle ;
\draw [color={rgb, 255:red, 250; green, 105; blue, 15 }  ,draw opacity=1 ]   (196,502) -- (210,509.5) ;
\draw [color={rgb, 255:red, 250; green, 105; blue, 15 }  ,draw opacity=1 ]   (221.5,538) -- (211,547) -- (195.5,551.5) -- (186.5,563.5) -- (175.5,562.5) -- (159.5,571) -- (142.5,566.5) -- (137.5,556.5) -- (123,552.5) -- (118.5,539) -- (125.5,519.5) -- (137,508.5) -- (147,504) -- (147.5,488) ;

\draw (84.33,154.07) node [anchor=north west][inner sep=0.75pt]  [color={rgb, 255:red, 128; green, 128; blue, 128 }  ,opacity=1 ]  {$W$};
\draw (152,238) node [anchor=north west][inner sep=0.75pt]  [color={rgb, 255:red, 255; green, 0; blue, 0 }  ,opacity=1 ]  {$z$};
\draw (198.5,97.4) node [anchor=north west][inner sep=0.75pt]  [color={rgb, 255:red, 255; green, 100; blue, 100 }  ,opacity=1 ]  {$\tilde{\sigma }$};
\draw (225,180.57) node [anchor=north west][inner sep=0.75pt]    {$\tilde{\pi }$};
\draw (189.33,184.73) node [anchor=north west][inner sep=0.75pt]  [color={rgb, 255:red, 30; green, 120; blue, 30 }  ,opacity=1 ]  {$\pi '$};
\draw (174,133.23) node [anchor=north west][inner sep=0.75pt]  [color={rgb, 255:red, 255; green, 255; blue, 255 }  ,opacity=1 ]  {$\hat{\sigma }$};
\draw (84.33,488.07) node [anchor=north west][inner sep=0.75pt]  [color={rgb, 255:red, 128; green, 128; blue, 128 }  ,opacity=1 ]  {$W$};
\draw (145.5,566.4) node [anchor=north west][inner sep=0.75pt]  [color={rgb, 255:red, 250; green, 105; blue, 15 }  ,opacity=1 ]  {$z-z'$};
\draw (205,405.4) node [anchor=north west][inner sep=0.75pt]  [color={rgb, 255:red, 0; green, 0; blue, 255 }  ,opacity=1 ]  {$z'$};
\draw (235.5,457.9) node [anchor=north west][inner sep=0.75pt]  [color={rgb, 255:red, 255; green, 255; blue, 255 }  ,opacity=1 ]  {$\sigma '$};

\end{tikzpicture}
    \caption{Proof of \Cref{induction}.
    Above: given $z= \partial \sigma$, we
    consider $\tilde{\sigma}$
    the $2$-simplices of $\sigma$
    not lying comfortably within $W$.
    We approximate the part $\tilde{\pi}$
    of $\tilde{z} = \partial \tilde{\sigma}$
    which lies outside of $\mathring{G}^n_M(r)$ using \Cref{approximate 1-chains} to get $\pi'$, and the difference
    between the approximation and the target is the boundary of $\hat{\sigma} \in C_2(M)$. \\
    Below: Taking $\sigma' = \tilde{\sigma} + \hat{\sigma}$ and $z' = \partial{\sigma}'$
    gives a $\sigma'$ which does not
    come too close to the boundary of $\mathcal{O} \setminus (\mathcal{D} \cup W)$, and $|z-z'| \subseteq W$.
    }
    \label{fig:decomposition}
    \end{figure}
    
    Define $\tilde{z} = \partial \tilde{\sigma}$.
    Since it is not
    a priori true that
    $\tilde{z} \in Z_1(\mathring{G}^n_M(r))$,
    we must replace part
    of $\tilde{z}$
    by an approximation
    using \Cref{approximate 1-chains}
    to obtain our desired $z'$.

    To this end, define
    \[
        \tilde{\pi} :=
        \sum_{c \notin z} t_c(\tilde{z}) c,
    \]
    which we can think of as being
    ``the set difference of $\tilde{z}$
    and $z$.''
    Then,
    \begin{align*}
        -\partial \tilde{\pi} =
        \partial(\tilde{z} - \tilde{\pi})
        =
        \sum_{c \in z} t_c(\tilde{z})\partial c
        \in C_0(\mathring{G}^n_M(r)).
    \end{align*}

    Therefore, the hypotheses
    of \Cref{approximate 1-chains}
    apply to $\tilde{\pi}$,
    and so there exists
    $\pi' \in C_1(\mathring{G}^n_M(r))$
    and $\hat{\sigma} \in C_2(M)$ such that
    \[
    \partial\hat{\sigma} = \pi' - \tilde{\pi}
    \]
    and for all $x \in |\hat{\sigma}|$,
    there exists $x' \in |\tilde{\pi}|$
    such that $d_M(x',x) \le K$.

    We claim that setting
    \[
        \sigma' = \tilde{\sigma} + \hat{\sigma}
    \]
    and
    \[
        z' = (\tilde{z} - \tilde{\pi}) + \pi'
    \]
    gives our desired conclusion.

    First, note that
    \[
        \partial \sigma'
        = \partial \tilde{\sigma}
        + \partial \hat{\sigma}
        = \tilde{z} + \pi' - \tilde{\pi}
        = z',
    \]
    as desired.
    This also implies that
    $\partial z' = 0$,
    and since $(\tilde{z} -\tilde{\pi}),
    \pi' \in C_1(\mathring{G}^n_M(r))$,
    we then have $z' \in Z_1(\mathring{G}^n_M(r))$,
    as desired.

    Next, we want to show
    that $|z-z'| \subseteq W$. 
    To this end, note that
    \begin{equation} \label{pi-tilde in W}
        \sum_{c \notin z} t_c(\partial(\sigma - \tilde{\sigma})) c 
        = \sum_{c \notin z} (t_c(z) - t_c(\tilde{z}))c\\
        = - \sum_{c \notin z} t_c(\tilde{z})c
        = -\tilde{\pi},
    \end{equation}
    and
    \begin{equation} \label{comfortably inside W}
        \sigma - \tilde{\sigma}
        = \sum
        \{ t_c(\sigma)c : \forall x \in |c|,
        \bar{B}(x, K) \subseteq W\}.
    \end{equation}

    Since by \eqref{pi-tilde in W} we have
    \begin{align*}
        z - z' &= z - \tilde{z} + \tilde{\pi} - \pi' \\
        &= \partial (\sigma - \tilde{\sigma})
        - \sum_{c \notin z} t_c(\partial(\sigma - \tilde{\sigma})) c - \pi' \\
        &= \sum_{c \in z} t_c(\partial(\sigma - \tilde{\sigma}))c - \pi',
    \end{align*}
    it suffices to show that
    $|\sigma - \tilde{\sigma}|,|\pi'| \subseteq W$.

    But \eqref{comfortably inside W} shows that
    for any $x \in |\sigma - \tilde{\sigma}|$,
    we have $\bar{B}(x,K) \subseteq W$,
    so in particular $|\sigma - \tilde{\sigma}| \subseteq W$.
    Moreover, \eqref{pi-tilde in W}
    shows that $|\tilde{\pi}| 
    \subseteq|\sigma - \tilde{\sigma}|$.
    Then, for any $x' \in |\pi'|$,
    there exists $x \in |\tilde{\pi}|$
    such that $d_M(x,x') \le K$,
    and so $x' \in \bar{B}(x,K) \subseteq W$,
    as desired.
    So we have shown $|z - z'| \subseteq W$.

    Lastly, let $x' \in |\sigma'| \subseteq|\tilde{\sigma}| \cup |\hat{\sigma}|$.
    We want to find $V' \in \mathcal{O} \setminus \mathcal{D}'$ such that
    $\bar{B}(x',L-d'(x')K) \subseteq V'$.

    First, suppose that $x' \in |\tilde{\sigma}|$.
    By our hypotheses, we have that there
    exists $V' \in \mathcal{O} \setminus \mathcal{D}$
    such that
    \[
        \bar{B}(x', L - d(x')K)
        \subseteq V'.
    \]
    We need to rule out 
    the possibility that $V' = W$.
    However, since 
    there is some $x$ within distance $2r$ of $x'$ lying in the same $2$-simplex such that $\bar{B}(x,K) \not\subseteq W$,
    we have that
    \[
        \bar{B}(x', L - d(x')K) \supseteq
        \bar{B}(x', L - \Delta K) \supseteq
        \bar{B}(x',K + 2r) \supseteq
        \bar{B}(x,K)
    \]
    intersects $W^c$, and therefore $V' \ne W$,
    as desired.\footnote{
    An astute reader may notice that,
    in case $\mathcal{O}\setminus \mathcal{D} = \{W\}$,
    we have just shown a contradiction.
    Thus, in this case,  $|\tilde{\sigma}|=\emptyset$; continuing with the argument below then shows that $|\hat{\sigma}| = \emptyset$ as well,
    all of which is to say,
    our desired condition on $|\sigma'|$
    holds vacuously.
    }
    Note that above we have used the fact that 
    \[
        \sup_{x \in M} d(x) \le \Delta;
    \]
    this holds because if, to the contrary, there were a sequence of $(\Delta+1)$
    points $x_1,x_2,...,x_{\Delta+1}$, each within
    distance $K$ of the next, and each
    lying in a distinct element of $\mathcal{O}$,
    we would have that $B(x_1,(\Delta+1)K)$
    intersects more than $\Delta$
    distinct elements of $\mathcal{O}$,
    contradicting our assumption on $\Delta$.
    We have also used the assumption that $L \ge (\Delta +1)K + 2r$.

    Then, since $d'(x') \ge d(x')$, 
    we therefore also have
    \[
        \bar{B}(x', L - d'(x')K)
        \subseteq V',
    \]
    with $V' \in \mathcal{O} \setminus \mathcal{D}'$, as desired.

    Next, suppose that $x' \in |\hat{\sigma}|$.
    Then there exists 
    $x \in |\tilde{\pi}| \subseteq|\tilde{\sigma}| \cap |\sigma -\tilde{\sigma}|$
    with $d(x,x') \le K$ by our 
    construction of $\hat{\sigma}$.
    Since $x \in |\tilde{\sigma}|$,
    the same argument as above
    shows that
    there exists $V' \in \mathcal{O} \setminus \mathcal{D}'$
    such that 
    \[
        \bar{B}(x, L - d(x) K)
        \subseteq V'.
    \]
    Since $d(x,x') \le K$
    and $x \in |\sigma - \tilde{\sigma}| \subseteq W$,
    we also have that
    $d'(x') \ge d(x)+1$, and so
    \[
        \bar{B}(x', L - d'(x')K)
        \subseteq \bar{B}(x,
        L - [d'(x')-1]K)
        \subseteq \bar{B}(x,L - d(x)K)
        \subseteq V',
    \]
    as desired.
\end{proof}

\section{Existence of an appropriate cover}

Now we prove that there exists a finite open cover of $M$ by sets of $O(r|\log r|)$ diameter
which satisfies the
hypotheses of \cref{decomposition}.

\begin{lemma} \label[lemma]{good cover exists}
    For any $\epsilon >0$,
    there exists $r_0(\epsilon,M)$
    such that the following holds
    for all
    $0 < r \le r_0$.
    Taking $\Delta = \lfloor e^{2d\epsilon} 13^d \rfloor $,
    $K=4 Q \rho r |\log r|$
    and $L = (\Delta + 1)K + 2r$,
    there exists a finite open cover
    $\mathcal{O}$
    of $M$
    such that
    for each $x \in M$
    there exists $U$ in $\mathcal{O}$
    such that
    $\bar{B}(x,L) \subseteq U$,
    and we have
    \[
        \sup_{x \in M}
        \#
        \{U \in \mathcal{O}:B(x,(\Delta+1)K) \cap U \ne \emptyset \}
        \le 
        \Delta.
    \]
    Moreover, for each $U \in \mathcal{O}$,
    we have
    $\diam(U) \le 
    (\Delta + 1)16 Q \rho r |\log r| + 8r$.
\end{lemma}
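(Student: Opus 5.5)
We will build the cover from a maximal separated net. Set $s := L$, and let $\{x_1,\dots,x_N\}\subseteq M$ be a maximal $s$-separated subset, i.e.\ pairwise $d_M(x_i,x_j) \ge s$. By maximality the balls $B(x_i,s)$ cover $M$; compactness makes the net finite. We take
\[
\mathcal{O} := \{ U_i := B(x_i, 2s) \}_{i=1}^N .
\]
The Lebesgue-number condition is then immediate. Given $x \in M$, pick $i$ with $d_M(x,x_i) < s$. Then $\bar{B}(x,L) = \bar{B}(x,s) \subseteq B(x_i,2s)$, because $d_M(x,x_i) < s$ gives strict inequality. The diameter bound is also immediate: $\diam(U_i) \le 4s = 4L = 4(\Delta+1)K + 8r = (\Delta+1)16Q\rho r|\log r| + 8r$, exactly as stated.

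The main work is the ply bound, which is a volume-packing argument. Fix $x \in M$ and suppose $B(x,(\Delta+1)K)\cap U_i \neq \emptyset$. Since $(\Delta+1)K \le L = s$, this forces $d_M(x,x_i) < 3s$. The balls $B(x_i,s/2)$ are pairwise disjoint and each lies inside $B(x, 3.5 s)$, so
\[
\#\{i : d_M(x,x_i) < 3s\} \le \frac{\vol B(x,3.5s)}{\min_i \vol B(x_i,s/2)} .
\]

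To compare these volumes with Euclidean ones we use \Cref{lem:almostflat} with $\eta = \epsilon$. By compactness, we take a finite cover by balls $B_M(z,\delta_z/2)$ on which the lemma holds, and let $\delta$ be the resulting uniform radius. Then for every $y$ and every $t \le \delta/(2e^{\epsilon})$, normal-coordinate balls satisfy
\[
B_{\R^d}(u(y), e^{-\epsilon}t) \subseteq u(B_M(y,t)) \subseteq B_{\R^d}(u(y),e^{\epsilon}t),
\]
and the density is within $e^{\pm\epsilon}$. This yields
\[
e^{-\epsilon(d+1)}\omega_d t^d \le \vol B_M(y,t) \le e^{\epsilon(d+1)}\omega_d t^d .
\]
The ratio above is therefore at most $e^{2\epsilon(d+1)} 7^d$. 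We choose $r_0$ small enough that $3.5s = 3.5L \le \delta/(2e^\epsilon)$ for all $r \le r_0$; this is possible since $L = O(r|\log r|) \to 0$, with $\Delta$ depending only on $\epsilon$ and $d$. We must then check that $e^{2\epsilon(d+1)}7^d \le \lfloor e^{2d\epsilon}13^d\rfloor$. This holds with plenty of room for small $\epsilon$ (for instance, whenever $e^{2\epsilon}(7/13)^d \le 1/2$), and for the remaining cases we can run the same argument with a smaller distortion parameter $\epsilon' \le \epsilon$ in \Cref{lem:almostflat}.

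The expected obstacle is purely bookkeeping: matching the constant $13^d e^{2d\epsilon}$ and the diameter constant. If these are to be matched exactly, the radii in the construction can be adjusted; for example, one can use a $2L$-net with $U_i = B(x_i, L + \text{slack})$ and compare the disjoint balls inside a ball of radius $6.5$ times the packing radius, which yields the $13^d$ factor. The logical structure, namely net, covering, disjoint-ball packing, and almost-flat volume comparison, does not change.
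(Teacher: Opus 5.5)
Your proof is correct and follows essentially the same route as the paper: a maximal separated net, the cover by open balls of radius $2L$ around its points, and a disjoint-ball volume count based on near-Euclidean volumes of small balls. The differences are minor. You use an $L$-separated net, which gives a ply bound of order $7^d$ instead of the paper's $13^d$ from an $L/2$-dense net. You also derive the volume comparison from \Cref{lem:almostflat} rather than from Gray's small-ball volume expansion, and the extra $e^{2\epsilon}$ factor this produces is absorbed by choosing the distortion parameter smaller, which is legitimate since $\Delta \ge 13^d$ regardless of $\epsilon$.
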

\begin{proof}
    Take $\Delta = \lfloor e^{2d\epsilon} 13^d \rfloor $,
    $K=4Q\rho r |\log r|$, and
    $L = (\Delta + 1)K + 2r$, and let $R = 2L$.

    Let $X \subseteq M$
    be a maximal $\frac{R-(3L/2)}{2}$-packing;
    that is, for all $x,y \in X$ with $x \ne y$, 
    $B(x, \frac{R-(3L/2)}{2}) \cap B(y, \frac{R-(3L/2)}{2}) = \emptyset$,
    and for any $z \notin X$,
    $B(z,\frac{R-(3L/2)}{2})$
    intersects $B(x,\frac{R-(3L/2)}{2})$
    for some $x \in X$.

    The maximality condition
    implies that $X$ is $(R-(3L/2))$-dense
    in $M$, that is, for all $z \in M$,
    there is $x \in X$ such 
    that $d_M(x,z) \le R-(3L/2)$.
    
    We claim that therefore
    \[
        \mathcal{O} :=
        \{ B(x,R) : x \in X\}
    \]
    has the property that
    for any $p \in M$,
    there is some $U \in \mathcal{O}$
    such that $\bar{B}(p,L) \subseteq U$.
    To see this, take $x \in X$
    such that $d_M(x,p) \le R - (3L/2)$
    and note that by the triangle
    inequality, $\bar{B}(p,L) \subseteq B(p,3L/2) \subseteq B(x,R)$.

    We now want to bound the 
    ply of $\mathcal{O}$.
    Choose $0< r_0$
    sufficiently small
    that for all 
    $s < 100L(r_0)$, we have that
    for all $p \in M$,
    \[
        \left|\log \left(\frac{\mathrm{vol}B_M(p,s)}{\mathrm{vol}B_{\R^d}(0,s)}\right)\right|
        \le d\epsilon.\footnote{
        Referring to (for instance) Theorem 3.1 of
        \cite{Gray1974},
        one can see that as $s \to 0$ the volume of any ball $B(p,s)$ in $M$ approaches the volume of a Euclidean ball of radius $s$,
        and that the error in this approximation is controlled by the scalar Ricci curvature at $p$.
        Since $M$ is compact,
        we have a uniform scalar Ricci curvature bound, and therefore for any $\epsilon$
        we can choose $r_0$
        sufficiently small
        that the desired inequality holds.
        }
    \]
    For $p \in M$,
    we want to bound
    the number of distinct
    $x \in X$ such that
    $B(p,(\Delta+1)K) \cap B(x,R) \ne \emptyset$,
    or equivalently,
    such that $x \in B(p,(\Delta+1)K+R)$.
    Since $X$ is a
    $\frac{R-(3L/2)}{2}$-packing,
    we have the inclusion of
    disjoint sets
    \begin{align*}
        \bigsqcup_{x \in X \cap B(p,(\Delta+1)K +R)}
        B\left(x, \frac{R-(3L/2)}{2}\right)
        \subset
        B\left(p, (\Delta+1)K + R +\frac{R - (3L/2)}{2}\right)
    \end{align*}
    which implies the volume bound
    \begin{align*}
        \sum_{x \in X \cap B(p,(\Delta+1)K +R)}
        \mathrm{vol}B\left(x, \frac{R-(3L/2)}{2}\right)
        \le 
        \mathrm{vol} B\left(p,\frac{2(\Delta+1)K+3R-(3L/2)}{2}\right)
    \end{align*}
    and so using
    the comparison with Euclidean 
    volume and rearranging gives
    \begin{align*}
        \# \{ U \in \mathcal{O}:
        B(p,(\Delta+1)K) \cap U \ne \emptyset \}
        &=
        |X \cap B(p,(\Delta+1)K+R)|
        \\
        &\le 
        e^{2d\epsilon}
        \frac{\mathrm{vol} B_{\R^d}(\frac{2(\Delta+1)K+3R - (3L/2)}{2} )}{\mathrm{vol} B_{\R^d}(\frac{R - (3L/2)}{2})} \\
        &= e^{2d\epsilon} \left( \frac{4(\Delta+1)K + 6R - 3L}{2R - 3L}\right)^d \\
        &\le 
        e^{2d\epsilon}
        \left( \frac{L + 6R}{2R - 3L}
        \right)^d \\
        &= e^{2d\epsilon} 13^d,
    \end{align*}
    and since the quantity behing bounded is an integer,
    it is at most $\lfloor e^{2d\epsilon} 13^d \rfloor = \Delta$,
    as desired.

    Finally, each $U=B(x,R) \in \mathcal{O}$
    has diameter
    at most $2R = 4L =(\Delta + 1)16 Q \rho r |\log r| + 8r$,
    as desired.
\end{proof}

\section{$\ker(Z_1(G^n_M(r)) \to H_1(M))$ is generated by small cycles} \label{sec:characterize kernel}
We now combine our work above to show 
that the kernel of the natural map is indeed generated by small cycles on the event $A_{Q,\rho}(n,r)$.

\begin{prop} \label[prop]{choice of Q'}
    Fix $Q, \rho \ge 1$.
    Let $r_0$ be sufficiently small
    that \Cref{good cover exists}
    holds for $\epsilon=1/2$,
    \eqref{r small enough}
    holds,
    and $-\log r_0 \ge 8$.
    Then, on the
    event $A_{Q, \rho}(n,r)$, 
    taking 
    $Q' = [(13e)^d+1]16Q\rho+1$,
    we have
    \[
        Z_1^{Q' r|\log r|}(G^n_M(r))
        \supseteq
        \ker(Z_1(G^n_M(r)) \to H_1(M)).
    \]
\end{prop}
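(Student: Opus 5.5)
Let $z \in \ker(Z_1(G^n_M(r)) \to H_1(M))$, so $z = \partial\sigma$ for some $\sigma \in C_2(M)$. We first split $z$ according to connected components of $G^n_M(r)$. Since edges of different components share no vertices, the restriction $z_C$ of $z$ to the edges of a component $C$ is itself a cycle, and $z = \sum_C z_C$.

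For each component $C \ne \mathring{G}^n_M(r)$, the event $A^3_{Q,\rho}$ gives $\diam(|z_C|) \le \diam(C) \le Qr|\log r| \le Q' r|\log r|$. Hence $z_C \in Z_1^{Q'r|\log r|}(G^n_M(r))$ directly, with no homological input needed.

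It remains to handle $\mathring{z} := z_{\mathring{G}}$, the part supported on the giant component. We have
\[
\mathring{z} = z - \sum_{C \ne \mathring{G}} z_C,
\]
and each small-component cycle $z_C$ lies in a ball of radius $Qr|\log r| < \mathrm{inj}(M)$ (by \eqref{r small enough}). Such a ball is contractible, so $z_C = \partial \sigma_C$ for some $\sigma_C \in C_2(M)$. Therefore $\mathring{z} = \partial(\sigma - \sum_C \sigma_C)$ is a boundary in $M$, and $\mathring{z} \in Z_1(\mathring{G}^n_M(r))$.

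We now apply \Cref{good cover exists} with $\epsilon = 1/2$. This gives $\Delta = \lfloor e^{d}13^d\rfloor \le (13e)^d$, $K = 4Q\rho r|\log r|$, $L = (\Delta+1)K + 2r$, and a finite open cover $\mathcal{O}$ with Lebesgue-type number $L$ and the required ply bound. These are exactly the hypotheses of \Cref{decomposition}, so on $A_{Q,\rho}(n,r)$ we can write
\[
\mathring{z} = \sum_{U \in \mathcal{O}} z_U,
\]
with each $z_U \in Z_1(\mathring{G}^n_M(r))$ and $|z_U| \subseteq U$. In particular,
\[
\diam|z_U| \le \diam U \le (\Delta+1)16Q\rho r|\log r| + 8r .
\]

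The last step, which is the only place needing care, is to check that this is at most $Q'r|\log r|$. We have $(\Delta+1)16Q\rho \le [(13e)^d+1]16Q\rho$. The leftover $8r$ is absorbed into the extra $+1$ in $Q'$, since $-\log r \ge -\log r_0 \ge 8$ gives $8r \le r|\log r|$.

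Thus every $z_U$, and every $z_C$ for $C$ non-giant, lies in $Z_1^{Q'r|\log r|}(G^n_M(r))$, so their sum $z$ does as well. The main work is already contained in \Cref{decomposition}; the remaining points to verify are that the non-giant part is a boundary (via contractibility of small balls) and the constant bookkeeping above.
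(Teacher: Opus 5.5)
Your proposal is correct and follows essentially the same route as the paper. Both arguments split $z$ by connected components and handle the non-giant components via $A^3_{Q,\rho}$ and contractibility of balls of radius below $\mathrm{inj}(M)$, so the giant part remains a boundary. The giant part is then decomposed using \Cref{good cover exists} with $\epsilon=1/2$ and \Cref{decomposition}, and the $8r$ term is absorbed because $|\log r|\ge 8$. The only difference is ordering: the paper proves the case $z\in Z_1(\mathring{G}^n_M(r))$ first and then reduces the general case to it.
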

\begin{proof}
    Assume $A_{Q,\rho}$ holds.
    Let $z \in Z_1(G^n_M(r))$
    be in the kernel of the natural map;
    that is, there
    exists $\sigma \in C_2(M)$
    such that $\partial \sigma = z$.

    First,
    consider the case
    that $z \in Z_1(\mathring{G}^n_M(r))$.
    Let $\mathcal{O}$
    be the cover
    constructed in \Cref{good cover exists}.
    Then $\mathcal{O}$
    satisfies the hypotheses of
    \Cref{decomposition},
    and each cover element
    has diameter at most
    $[[(13e)^d+1]16Q\rho+1]r|\log r|=Q'r|\log r|$.
    By \Cref{decomposition}
    we have a decomposition
    \[
        z = \sum_{U \in \mathcal{O}} z_U
    \]
    where each $z_U \in Z_1(\mathring{G}^n_M(r))$
    and each $|z_U| \subseteq U$;
    in particular,
    each $z_U \in Z_1^{Q'r|\log r|}(G^n_M(r))$
    and therefore
    $z \in Z_1^{Q'r|\log r|}(G^n_M(r))$,
    as desired.

    Now consider a
    general
    $z \in Z_1(G^n_M(r)) \cap B_1(M)$.
    We have a decomposition
    \[
        z = \sum_{\kappa} z_{\kappa},
    \]
    where the sum is over
    connected components
    $\kappa$ of $G^n_M(r)$,
    each $z_{\kappa} \in Z_1(G^n_M(r))$,
    and each $|z_\kappa| \subseteq \kappa$.\footnote{To see that this
    decomposition holds,
    for each connected component
    $\kappa$, let $C_0(\kappa)\subseteq C_0(G^n_M(r))$
    be the collection of $0$-chains
    supported on $\kappa$.
    Note that each $1$-simplex
    $\ell \in C_1(G^n_M(r))$
    necessarily belongs to 
    a single connected component
    $\kappa$.
    Therefore 
    we can set $z_\kappa := \sum_{\ell \in z, \partial \ell \in C_0(\kappa)} t_\ell(z)\ell$
    and get the desired decomposition.
    The fact that each
    $\partial z_\kappa = 0$
    comes from the fact
    that each $\partial z_\kappa \in C_0(\kappa)$,
    $\partial z = \sum_\kappa \partial z_\kappa = 0$,
    and $C_0(G^n_M(r)) = \bigoplus_{\kappa} C_0(\kappa)$
    is a direct sum decomposition.}

    If $\kappa \ne \mathring{G}^n_M(r)$,
    then, by $A_{Q,\rho}(n,r)$,
    $\diam(\kappa) \le Qr|\log r|
    \le Q' r|\log r|$,
    and thus
    $z_\kappa \in Z^{Q' r|\log r|}_1(G^n_M(r))$.
    Moreover,
    by our assumption
    on $r_0$
    relative to the injectivity
    radius on $M$,
    $z_\kappa$
    is supported on a contractible
    subset of $M$
    and therefore
    there exists $\sigma_\kappa \in C_2(M)$
    such that $\partial \sigma_\kappa = z_\kappa$.

    Thus
    \[
       \sum_{\kappa \ne \mathring{G}^n_M(r)} z_\kappa
       \in Z^{Q'r|\log r|}_1(G^n_M(r)) \cap B_1(M).
    \]
    Since by assumption $z \in B_1(M)$,
    we then have
    \[
        z_{\mathring{G}^n_M(r)}
        = z - \sum_{\kappa \ne \mathring{G}^n_M(r)}
        z_\kappa \in B_1(M).
    \]
    But we showed above
    that 
    if $z_{\mathring{G}^n_M(r)}
    \in Z_1(\mathring{G}^n_M(r)) 
    \cap B_1(M)$,
    then $z_{\mathring{G}^n_M(r)}
    \in Z^{Q' r|\log r|}_1(G^n_M(r))$,
    and so we are done.
\end{proof}

\section{Proof of \Cref{surjectivity} and \Cref{isomorphism}} \label{sec:main proofs}
\begin{proof}[Proof of \Cref{surjectivity}]
    Recall that $G(n;r)$
    is the random geometric graph whose underlying vertex set is $n$ points sampled independently from $dM$,
    while $G^n_M(r)$ is the random geometric graph whose underlying vertex set is a Poisson point process on $M$ with intensity measure $ndM$.
    Let $N \sim \mathrm{Poi}(n)$ be the number of points in the Poisson point process underlying $G^n_M(r)$.
    Recall that $G_M(n;r)$
    is equal in distribution to $G^n_M(r)|N=n$,
    and that by Stirling's formula we have $\Prob(N=n) = (1 + o(1))(2\pi n)^{-1/2}$.

    Next, note that the event
    \[
    \{ Z_1(G^n_M(r)) \to H_1(M) \mbox{ is surjective} \}
    \]
    is increasing in $r$, while the event
    \[
    \{ Z_1(G^n_M(r)) \to H_1(M) \mbox{ is the } 0 \mbox{ map} \}
    \]
    is decreasing.
    Thus, it will suffice to prove \Cref{surjectivity} under the assumption that $r = \lam_0^{1/d} n^{-1/d}$.

    Assume $\lam_0 > \lam_c$
    and $r = \lam_0^{1/d} n^{-1/d}$.
    Choose $0 < \kappa < \infty$
    sufficiently large that
    $\kappa/d>1/2$.
    By \Cref{key event is likely},
    there exist $1 \le Q, \rho < \infty$
    and $r_0'>0$
    depending
    only on $\lam_0, \kappa,$
    and $M$
    such that $\Prob(A_{Q,\rho}(n,r)^c) \le r^\kappa$ whenever $0 < r \le r_0'$.
    Take $r_0 \in (0, r_0']$
    sufficiently small
    that \eqref{r small enough} holds.
    Then, by \Cref{prop:surj},
    when $n$
    is sufficiently large
    that $r=\lam_0^{1/d} n^{-1/d} \le r_0$,
    we have
    \[
        \Prob(Z_1(G^n_M(r)) \to H_1(M) \mbox{ is not surjective})
        \le \Prob(A_{Q,\rho}(n;r)^c) \le r^\kappa.
    \]

    But then we have
    \begin{align*}
        \Prob\left(
        \begin{array}{c}
        Z_1(G_M(n;r) \to H_1(M) \\
        \mbox{is not surjective}
        \end{array}
        \right)
        &=
        \Prob\left(
        \begin{array}{c}
        Z_1(G^n_M(r)) \to H_1(M) \\
        \mbox{is not surjective}
        \end{array}
        \middle| N = n
        \right)
        \\
        &\le r^\kappa O(n^{1/2})
        = O(n^{1/2 - \kappa/d}),
    \end{align*}
    which tends to $0$ as $n \to \infty$ by our choice of $\kappa$, as desired.

    Now suppose $\lam_0 < \lam_c$
    and $r=\lam_0^{1/d} n^{-1/d}$.
    Any component 
    with diameter at most $\mathrm{inj}(M)$ cannot support a representative of a nontrivial element of $H_1(M)$.
    Since $\lam_0 < \lam_c$ is subcritical, we therefore have
    \begin{align*}
        \Prob(Z_1(G^n_M(r)) \to H_1(M)
        \mbox{ is nonzero}) 
        &\le
        \Prob\left(
        \begin{array}{c}
        G^n_M(r) \mbox{ contains a
        component of } \\ d_M\mbox{-diameter at least } \mathrm{inj}(M)
        \end{array}
        \right) \\
        &\le
        \Prob\left(
        \begin{array}{c}
        G^n_M(r) \mbox{ contains a component} \\ \mbox{with at least } \mathrm{inj}(M)r^{-1} \mbox{ vertices}
        \end{array}
        \right) \\
        &= O(\exp(-cr^{-1}))
        =O(\exp(-c' n^{1/d})
    \end{align*}
    by e.g. Lemma 10.2 of Penrose \cite{Pen03} (and our usual arguments using local coordinates, here made easier by monotonicity).
    The same conditioning trick as above then gives our desired final result.
\end{proof}

\begin{proof}[Proof of \Cref{isomorphism}]
    Let $\lam_0 > \lam_c$.
    First, note that
    the proof of \Cref{surjectivity}
    actually shows that,
    if $nr^d \ge \lam_0 > \lam_c$,
    the probability that surjectivity
    fails decays faster than
    any polynomial in $n^{-1}$;
    therefore, for any $\kappa < \infty$,
    there is some $r_0^{(1)} > 0$
    depending only on $\kappa, \lam_0,$ and $M$ such that
    \[ \Prob(Z_1(G_M(n;r) \to H_1(M) \mbox{ is not surjective})
    \le r^\kappa \]
    if $nr^d \ge \lam_0$ and $0 < r \le r_0^{(1)}$
    (since $nr^d \ge \lam_0$ means $n^{-1}=O(r^d)$).

    Therefore, by the First Isomorphism Theorem, it suffices to find $Q' < \infty$
    and $r_0>0$ such that
    if $nr^d \ge \lam_0$
    and $0 < r \le r_0$, then
    \[
        \ker (Z_1(G_M(n;r)) \to H_1(M))
        = Z_1^{Q' r|\log r|}(G_M(n;r)),
    \]
    except with probability
    at most $r^\kappa$.

    We first cover the
    case that
    $n \le r^{-(d+1)}$.
    Given $0 < \kappa < \infty$,
    choose $0 < \kappa' < \infty$
    so that $\kappa' - (d+1)/2 > \kappa$.
    Then, using \Cref{key event is likely}, we obtain $1 \le Q,\rho < \infty$ 
    depending
    only on $\lam_0, \kappa',$ and $d$
    and $r_0^{(2)} \in (0, r_0^{(1)}]$
    depending only on $\lam_0, \kappa',$ and $M$
    so that whenever $0<r \le r_0^{(2)}$
    and $nr^d \ge \lam_0$,
    we have $\Prob(A_{Q,\rho}(r)) \ge 1 - r^{\kappa'}$.
    Then take $Q'_1$ and $r_0^{(3)} \in (0,r_0^{(2)}]$
    as in \Cref{choice of Q'}
    so that
    \[
    \Prob(\ker (Z_1(G^n_M(r)) \to H_1(M))
        \subseteq Z_1^{Q'_1 r|\log r|}(G^n_M(r))
        \ge
    \Prob(A_{Q,\rho}(n,r)) \ge 1 - r^{\kappa'}.
    \]
    for all $nr^d \ge \lam_0, 0 < r \le r_0^{(3)}$.
    Then, if $n \le r^{-(d+1)}$,
    standard de-Poissonization (as in the proof of \cref{surjectivity} above) gives
    \begin{align*}
        &\Prob(\ker (Z_1(G_M(n;r)) \to H_1(M))
        \not\subseteq Z_1^{Q'_1 r|\log r|}(G_M(n;r))) \\
        &=
        \Prob(\ker (Z_1(G^n_M(r)) \to H_1(M))
        \not\subseteq Z_1^{Q'_1 r|\log r|}(G^n_M(r))|N=n) \\
        &\le
        n^{1/2}r^{\kappa'} \le r^\kappa
    \end{align*}
    when $nr^d \ge \lam_0,0 < r \le r_0^{(3)}$.

    Now consider the ``dense'' case where 
    $n \ge r^{-(d+1)}$.
    Then for each $r>0$
    we can cover $M$ by a collection of
    $O(r^{-d})$ balls of radius $r/2$,
    and with high probability
    as $n \to \infty$, each ball
    contains at least one vertex of
    $G_M(n;r)$. Indeed, for each such ball $B$ we have
    \begin{align*}
        \Prob(B \cap G_M(n;r) = \emptyset)
        &= (1 - \mathrm{vol}(B))^n \\
        &\le e^{-n\mathrm{vol}(B)} 
        = e^{-n \Theta(r^d)} \\
        &= e^{-\Omega(r^{-1})}
        =o(r^{d+\kappa}),
    \end{align*}
    and therefore a union bound
    gives
    \[
    \Prob(\mbox{some } B \mbox{ is unoccupied}) = o(r^\kappa).
    \]
    It is not
    hard to check that
    if every ball $B$
    is occupied, then
    $G_M(n;r)$ is connected and satisfies the
    event $A_{2,2}(n,r)$
    (with $G_M^n(r)$ replaced by $G_M(n;r)$).\footnote{In fact, in this ``dense'' case, the giant component (which is the only
    component) is distance $O(r)$
    away from every point,
    rather than $O(r|\log r|)$.
    Therefore, if we wanted,
    we could show that the kernel
    of the natural map
    is generated by cycles of 
    diameter $O(r)$ with
    high probability when
    $n \ge r^{-(d+1)}$.}
    Then, the same arguments
    as for \Cref{choice of Q'}
    show that we have a $Q'_2$
    (depending only on $\kappa$
    and $d$)
    and $r_0^{(4)} \in (0, r_0^{(3)}]$
    (depending only on $\kappa$
    and $M$)
    such that
    \[
        \Prob(\ker (Z_1(G_M(n;r)) \to H_1(M))
        \not\subseteq Z_1^{Q'_2 r|\log r|}(G_M(n;r)) \le r^{\kappa}
    \]
    for $n \ge r^{-(d+1)}, 0 < r \le r_0^{(4)}$.

    Thus, we have
    found $r_0^{(4)}>0$
    depending only on $\lam_0, \kappa, M$
    and $Q' = \max(Q'_1,Q'_2)$
    depending only on $\lam_0,
    \kappa,d$
    such that 
    if $nr^d \ge \lam_0$ 
    and $0 <r \le r_0^{(4)}$,
    the kernel of the natural map
    is generated by cycles
    of diameter $Q' r|\log r|$
    except with probability
    at most $r^\kappa$.
    On the other hand,
    if $r_0 \in (0,r_0^{(4)}]$ is sufficiently small that for all $0<r \le r_0$ we have
    \[
        Q' r|\log r|
        \le \mathrm{inj}(M),
    \]
    then any cycle $z \in Z_1^{Q' r |\log r|}(M)$
    necessarily lies in $B_1(M)$,
    so
    \[
        Z_1^{Q' r|\log r|}(G_M(n;r))
        \subseteq \ker(Z_1(G_M(n;r)) \to H_1(M))
    \]
    deterministically. 
    Thus, for all $nr^d \ge \lam_0$
    and $0 < r \le r_0$ we have
    \[
        \Prob(
        Z_1^{Q' r|\log r|}(G_M(n;r))
        = \ker(Z_1(G_M(n;r)) \to H_1(M)) )
        \ge 1 - r^\kappa,
    \]
    as desired.
    The First Isomorphism Theorem and our work above then gives that,
    for all $nr^d \ge \lam_0$
    and $0 < r \le r_0$,
    \begin{align*}
        &\Prob( H_1(M) \cong Z_1(G_M(n;r))/Z_1^{Q' r|\log r|}(G_M(n;r)) ) \\
        &\ge
        \Prob\left(
        \left\{\begin{array}{c}
        Z_1(G_M(n;r)) \to H_1(M) \\
        \mbox{is surjective}
        \end{array}\right\}
        \cap
        \left\{
        \begin{array}{c}
        \ker( Z_1(G_M(n;r)) \to H_1(M) )
        = Z_1^{Q' r|\log r|}(G_M(n;r))
        \end{array}
        \right\}
        \right) \\
        &\ge 1 - 2r^\kappa.
    \end{align*}
\end{proof}

\section{Proof of \Cref{need at least rlogr}} \label{sec: counterexample}
In this section we show that $O(r|\log r|)$ is the ``right'' scale at which to quotient out cycles,
in the sense that quotienting out by cycles of diameter $o(r|\log r|)$
is not sufficient to obtain $H_1(M)$.

The idea behind the proof of \Cref{need at least rlogr}
is as follows.
As usual, taking coordinates and rescaling, we will first show an analogous statement for a (slightly inhomogeneous) percolation process on $\R^d$; that is,
we will show that in a box of side length $\Omega(R)$
($R = 1/r$),
it is very likely to have an irreducible cycle of diameter $\Omega(\log R)$.
We can force the existence of an irreducible loop of length $c \log R$ by prescribing that an explicit configuration of boxes be occupied, and that a ``shell'' around this configuration is vacant; by making $c$ sufficiently small, we can make this probability decay more slowly than any polynomial in $R^{-1}=r$.
To connect this to the giant component in the supercritical regime, we can also add a small tunnel from the bottom of the loop through the shell, and then ask that there exists a path from that tunnel of diameter at least $Q \log R$ in a region disjoint from the loop configuration.
The probability of the existence of such a long path is bounded below by a constant in the supercritical regime, since it is implied by a connection to infinity in the lower half-space.
Since the prescribed event depends on a region of diameter $O(\log R)$, there are $\Omega((R/\log R)^d)$
disjoint regions where such an event can happen in 
a box of side length $\Omega(R)$.
Thus, if $c>0$ is chosen sufficiently small that
the probability of success for any one region is $\omega((R/\log R)^{-d})$, we will have at least one success with very high probability as $R \to \infty$ ($r \to 0$).

The least obvious part of the argument above is how to show that our prescribed configuration does have a cycle which is not a sum of small cycles, and that we can observe this event \emph{locally} on a patch of $\R^d$ (i.e. our cycle still cannot be written as a sum of small cycles even if we include cycles not contained entirely within the local patch). This purely topological argument
is given in \cref{topology}.
The rest of the argument outlined above goes through in a straightforward manner.

We now begin the constructions necessary for
the proof of \cref{need at least rlogr}.
Throughout this section, let $\sigma$ be a metric on $\R^d$
with 
\begin{equation} \label{eq:metric condition}   
    \sigma(x,y) \le \|x - y\| \le 2 \sigma(x,y)
\end{equation}
for all $x,y \in \R^d$,
let $\nu$ be a measure on $\R^d$ whose
Radon-Nikodym derivative $\frac{d\nu}{d\mathcal{L}}$
with respect to Lebesgue measure satisfies
\begin{equation} \label{eq:volume condition}
        \lam_- \le \frac{ d\nu}{d\mathcal{L}} \le \lam_+
\end{equation}
for some $0 < \lam_- \le \lam_+ < \infty$,
let $X^\nu$ be a Poisson point process on $\R^d$
with intensity measure $\nu$, and let
$G^\nu$ be the random geometric graph
whose vertices are the points of $X^\nu$
and whose edges connect pairs of points
with $\sigma$-distance at most 1.
For $\lam \in (0,\infty)$

Fix $a = \frac{1}{2\sqrt{d}}$. For any $z \in a\Z^d$,
define $K_z = z + [-a/2,a/2]^d$.
Note that if two adjacent boxes $K_z, K_{z'}$
are occupied, every vertex of $K_z \cap G^\nu$
is connected to every vertex of $K_{z'} \cap G^\nu$.

Fix a constant $c>0$ to be determined later.
Define the rectangle
\[
    \mathcal{R} := [-3,3] \times [0,c \log R] \times \{0\}^{d-2}.
\]
Also define the interval
\[
    I := \{0\} \times [-3,0] \times \{0\}^{d-2}.
\]
We then define the family of cubes
\[
    \mathcal{K} := \{ K_z : z \in a\Z^d, K_z \cap (\partial \mathcal{R} \cup I) \ne 0 \}.
\]

We also define the slightly larger rectangular neighborhood of $\mathcal{R}$:
\[
    \tilde{\mathcal{R}} := [-6,6] \times [-3, c\log R + 3] \times [-3,3]^{d-2},
\]
and the ``shell'' around $\partial \mathcal{R} \cup I$
that we will force to be empty we take to be
\[
    \mathcal{N} := \tilde{\mathcal{R}} \setminus \bigcup_{K \in \mathcal{K}} K.
\]

\begin{figure}
    
\centering

\tikzset{every picture/.style={line width=0.75pt}} 

\begin{tikzpicture}[x=0.75pt,y=0.75pt,yscale=-1,xscale=1]


\draw  [fill={rgb, 255:red, 210; green, 210; blue, 220 }  ,fill opacity=1 ] (260,10) -- (380,10) -- (380,330) -- (260,330) -- cycle ;
\draw   (290,40) -- (350,40) -- (350,290) -- (290,290) -- cycle ;

\draw[decoration={brace,raise=15pt},decorate]
  (260,295) -- node[left=15pt] {$c \log R$} (260,35);

\draw  [fill={rgb, 255:red, 208; green, 2; blue, 27 }  ,fill opacity=1 ] (315,285) -- (325,285) -- (325,295) -- (315,295) -- cycle ;
\draw  [fill={rgb, 255:red, 208; green, 2; blue, 27 }  ,fill opacity=1 ] (325,285) -- (335,285) -- (335,295) -- (325,295) -- cycle ;
\draw  [fill={rgb, 255:red, 208; green, 2; blue, 27 }  ,fill opacity=1 ] (335,285) -- (345,285) -- (345,295) -- (335,295) -- cycle ;
\draw  [fill={rgb, 255:red, 208; green, 2; blue, 27 }  ,fill opacity=1 ] (305,285) -- (315,285) -- (315,295) -- (305,295) -- cycle ;
\draw  [fill={rgb, 255:red, 208; green, 2; blue, 27 }  ,fill opacity=1 ] (295,285) -- (305,285) -- (305,295) -- (295,295) -- cycle ;
\draw  [fill={rgb, 255:red, 208; green, 2; blue, 27 }  ,fill opacity=1 ] (285,285) -- (295,285) -- (295,295) -- (285,295) -- cycle ;
\draw  [fill={rgb, 255:red, 208; green, 2; blue, 27 }  ,fill opacity=1 ] (285,275) -- (295,275) -- (295,285) -- (285,285) -- cycle ;
\draw  [fill={rgb, 255:red, 208; green, 2; blue, 27 }  ,fill opacity=1 ] (285,265) -- (295,265) -- (295,275) -- (285,275) -- cycle ;
\draw  [fill={rgb, 255:red, 208; green, 2; blue, 27 }  ,fill opacity=1 ] (285,255) -- (295,255) -- (295,265) -- (285,265) -- cycle ;
\draw  [fill={rgb, 255:red, 208; green, 2; blue, 27 }  ,fill opacity=1 ] (285,245) -- (295,245) -- (295,255) -- (285,255) -- cycle ;
\draw  [fill={rgb, 255:red, 208; green, 2; blue, 27 }  ,fill opacity=1 ] (285,235) -- (295,235) -- (295,245) -- (285,245) -- cycle ;
\draw  [fill={rgb, 255:red, 208; green, 2; blue, 27 }  ,fill opacity=1 ] (285,225) -- (295,225) -- (295,235) -- (285,235) -- cycle ;
\draw  [fill={rgb, 255:red, 208; green, 2; blue, 27 }  ,fill opacity=1 ] (285,215) -- (295,215) -- (295,225) -- (285,225) -- cycle ;
\draw  [fill={rgb, 255:red, 208; green, 2; blue, 27 }  ,fill opacity=1 ] (285,205) -- (295,205) -- (295,215) -- (285,215) -- cycle ;
\draw  [fill={rgb, 255:red, 208; green, 2; blue, 27 }  ,fill opacity=1 ] (285,195) -- (295,195) -- (295,205) -- (285,205) -- cycle ;
\draw  [fill={rgb, 255:red, 208; green, 2; blue, 27 }  ,fill opacity=1 ] (285,185) -- (295,185) -- (295,195) -- (285,195) -- cycle ;
\draw  [fill={rgb, 255:red, 208; green, 2; blue, 27 }  ,fill opacity=1 ] (285,175) -- (295,175) -- (295,185) -- (285,185) -- cycle ;
\draw  [fill={rgb, 255:red, 208; green, 2; blue, 27 }  ,fill opacity=1 ] (285,165) -- (295,165) -- (295,175) -- (285,175) -- cycle ;
\draw  [fill={rgb, 255:red, 208; green, 2; blue, 27 }  ,fill opacity=1 ] (285,155) -- (295,155) -- (295,165) -- (285,165) -- cycle ;
\draw  [fill={rgb, 255:red, 208; green, 2; blue, 27 }  ,fill opacity=1 ] (285,145) -- (295,145) -- (295,155) -- (285,155) -- cycle ;
\draw  [fill={rgb, 255:red, 208; green, 2; blue, 27 }  ,fill opacity=1 ] (285,135) -- (295,135) -- (295,145) -- (285,145) -- cycle ;
\draw  [fill={rgb, 255:red, 208; green, 2; blue, 27 }  ,fill opacity=1 ] (285,125) -- (295,125) -- (295,135) -- (285,135) -- cycle ;
\draw  [fill={rgb, 255:red, 208; green, 2; blue, 27 }  ,fill opacity=1 ] (285,115) -- (295,115) -- (295,125) -- (285,125) -- cycle ;
\draw  [fill={rgb, 255:red, 208; green, 2; blue, 27 }  ,fill opacity=1 ] (285,105) -- (295,105) -- (295,115) -- (285,115) -- cycle ;
\draw  [fill={rgb, 255:red, 208; green, 2; blue, 27 }  ,fill opacity=1 ] (285,95) -- (295,95) -- (295,105) -- (285,105) -- cycle ;
\draw  [fill={rgb, 255:red, 208; green, 2; blue, 27 }  ,fill opacity=1 ] (285,85) -- (295,85) -- (295,95) -- (285,95) -- cycle ;
\draw  [fill={rgb, 255:red, 208; green, 2; blue, 27 }  ,fill opacity=1 ] (285,75) -- (295,75) -- (295,85) -- (285,85) -- cycle ;
\draw  [fill={rgb, 255:red, 208; green, 2; blue, 27 }  ,fill opacity=1 ] (285,65) -- (295,65) -- (295,75) -- (285,75) -- cycle ;
\draw  [fill={rgb, 255:red, 208; green, 2; blue, 27 }  ,fill opacity=1 ] (285,55) -- (295,55) -- (295,65) -- (285,65) -- cycle ;
\draw  [fill={rgb, 255:red, 208; green, 2; blue, 27 }  ,fill opacity=1 ] (285,45) -- (295,45) -- (295,55) -- (285,55) -- cycle ;
\draw  [fill={rgb, 255:red, 208; green, 2; blue, 27 }  ,fill opacity=1 ] (285,35) -- (295,35) -- (295,45) -- (285,45) -- cycle ;
\draw  [fill={rgb, 255:red, 208; green, 2; blue, 27 }  ,fill opacity=1 ] (345,275) -- (355,275) -- (355,285) -- (345,285) -- cycle ;
\draw  [fill={rgb, 255:red, 208; green, 2; blue, 27 }  ,fill opacity=1 ] (345,265) -- (355,265) -- (355,275) -- (345,275) -- cycle ;
\draw  [fill={rgb, 255:red, 208; green, 2; blue, 27 }  ,fill opacity=1 ] (345,255) -- (355,255) -- (355,265) -- (345,265) -- cycle ;
\draw  [fill={rgb, 255:red, 208; green, 2; blue, 27 }  ,fill opacity=1 ] (345,245) -- (355,245) -- (355,255) -- (345,255) -- cycle ;
\draw  [fill={rgb, 255:red, 208; green, 2; blue, 27 }  ,fill opacity=1 ] (345,235) -- (355,235) -- (355,245) -- (345,245) -- cycle ;
\draw  [fill={rgb, 255:red, 208; green, 2; blue, 27 }  ,fill opacity=1 ] (345,225) -- (355,225) -- (355,235) -- (345,235) -- cycle ;
\draw  [fill={rgb, 255:red, 208; green, 2; blue, 27 }  ,fill opacity=1 ] (345,215) -- (355,215) -- (355,225) -- (345,225) -- cycle ;
\draw  [fill={rgb, 255:red, 208; green, 2; blue, 27 }  ,fill opacity=1 ] (345,205) -- (355,205) -- (355,215) -- (345,215) -- cycle ;
\draw  [fill={rgb, 255:red, 208; green, 2; blue, 27 }  ,fill opacity=1 ] (345,195) -- (355,195) -- (355,205) -- (345,205) -- cycle ;
\draw  [fill={rgb, 255:red, 208; green, 2; blue, 27 }  ,fill opacity=1 ] (345,185) -- (355,185) -- (355,195) -- (345,195) -- cycle ;
\draw  [fill={rgb, 255:red, 208; green, 2; blue, 27 }  ,fill opacity=1 ] (345,175) -- (355,175) -- (355,185) -- (345,185) -- cycle ;
\draw  [fill={rgb, 255:red, 208; green, 2; blue, 27 }  ,fill opacity=1 ] (345,165) -- (355,165) -- (355,175) -- (345,175) -- cycle ;
\draw  [fill={rgb, 255:red, 208; green, 2; blue, 27 }  ,fill opacity=1 ] (345,155) -- (355,155) -- (355,165) -- (345,165) -- cycle ;
\draw  [fill={rgb, 255:red, 208; green, 2; blue, 27 }  ,fill opacity=1 ] (345,145) -- (355,145) -- (355,155) -- (345,155) -- cycle ;
\draw  [fill={rgb, 255:red, 208; green, 2; blue, 27 }  ,fill opacity=1 ] (345,135) -- (355,135) -- (355,145) -- (345,145) -- cycle ;
\draw  [fill={rgb, 255:red, 208; green, 2; blue, 27 }  ,fill opacity=1 ] (345,125) -- (355,125) -- (355,135) -- (345,135) -- cycle ;
\draw  [fill={rgb, 255:red, 208; green, 2; blue, 27 }  ,fill opacity=1 ] (345,115) -- (355,115) -- (355,125) -- (345,125) -- cycle ;
\draw  [fill={rgb, 255:red, 208; green, 2; blue, 27 }  ,fill opacity=1 ] (345,105) -- (355,105) -- (355,115) -- (345,115) -- cycle ;
\draw  [fill={rgb, 255:red, 208; green, 2; blue, 27 }  ,fill opacity=1 ] (345,95) -- (355,95) -- (355,105) -- (345,105) -- cycle ;
\draw  [fill={rgb, 255:red, 208; green, 2; blue, 27 }  ,fill opacity=1 ] (345,85) -- (355,85) -- (355,95) -- (345,95) -- cycle ;
\draw  [fill={rgb, 255:red, 208; green, 2; blue, 27 }  ,fill opacity=1 ] (345,75) -- (355,75) -- (355,85) -- (345,85) -- cycle ;
\draw  [fill={rgb, 255:red, 208; green, 2; blue, 27 }  ,fill opacity=1 ] (345,65) -- (355,65) -- (355,75) -- (345,75) -- cycle ;
\draw  [fill={rgb, 255:red, 208; green, 2; blue, 27 }  ,fill opacity=1 ] (345,55) -- (355,55) -- (355,65) -- (345,65) -- cycle ;
\draw  [fill={rgb, 255:red, 208; green, 2; blue, 27 }  ,fill opacity=1 ] (345,45) -- (355,45) -- (355,55) -- (345,55) -- cycle ;
\draw  [fill={rgb, 255:red, 208; green, 2; blue, 27 }  ,fill opacity=1 ] (295,35) -- (305,35) -- (305,45) -- (295,45) -- cycle ;
\draw  [fill={rgb, 255:red, 208; green, 2; blue, 27 }  ,fill opacity=1 ] (305,35) -- (315,35) -- (315,45) -- (305,45) -- cycle ;
\draw  [fill={rgb, 255:red, 208; green, 2; blue, 27 }  ,fill opacity=1 ] (315,35) -- (325,35) -- (325,45) -- (315,45) -- cycle ;
\draw  [fill={rgb, 255:red, 208; green, 2; blue, 27 }  ,fill opacity=1 ] (325,35) -- (335,35) -- (335,45) -- (325,45) -- cycle ;
\draw  [fill={rgb, 255:red, 208; green, 2; blue, 27 }  ,fill opacity=1 ] (335,35) -- (345,35) -- (345,45) -- (335,45) -- cycle ;
\draw  [fill={rgb, 255:red, 208; green, 2; blue, 27 }  ,fill opacity=1 ] (345,35) -- (355,35) -- (355,45) -- (345,45) -- cycle ;
\draw  [fill={rgb, 255:red, 208; green, 2; blue, 27 }  ,fill opacity=1 ] (345,285) -- (355,285) -- (355,295) -- (345,295) -- cycle ;
\draw  [fill={rgb, 255:red, 208; green, 2; blue, 27 }  ,fill opacity=1 ] (315,295) -- (325,295) -- (325,305) -- (315,305) -- cycle ;
\draw  [fill={rgb, 255:red, 208; green, 2; blue, 27 }  ,fill opacity=1 ] (315,305) -- (325,305) -- (325,315) -- (315,315) -- cycle ;
\draw  [fill={rgb, 255:red, 208; green, 2; blue, 27 }  ,fill opacity=1 ] (315,315) -- (325,315) -- (325,325) -- (315,325) -- cycle ;
\draw  [fill={rgb, 255:red, 208; green, 2; blue, 27 }  ,fill opacity=1 ] (315,325) -- (325,325) -- (325,335) -- (315,335) -- cycle ;
\draw [color={rgb, 255:red, 30; green, 120; blue, 30 }  ,draw opacity=1 ]   (320,330) -- (337,345) -- (330,365.5) -- (342.5,378.5) -- (361.5,383) -- (379.5,396.5) -- (389.5,414) -- (405,423.5) -- (418,439.5) -- (412,455.5) -- (424,471.5) ;

\draw (357,193.4) node [anchor=north west][inner sep=0.75pt]    {$\mathcal{\textcolor[rgb]{0.82,0.01,0.11}{K}}$};
\draw (307,117.4) node [anchor=north west][inner sep=0.75pt]  [color={rgb, 255:red, 255; green, 255; blue, 255 }  ,opacity=1 ]  {$\mathcal{N}$};


\end{tikzpicture}
\caption{On the event $E$, we require all the red boxes (elements of $\mathcal{K}$) to be occupied,
we require the gray region $\mathcal{N}$ to be empty,
and we require the existence of a long edge path headed downward from the bottom red box.
}
\end{figure}

After building our large loop, we will want to connect it to the giant
using a region \emph{disjoint} from our loop construction,
so that we can use independence.
For a subset $K \subseteq \R^d$ and a point $x \in \R^d$, let us say that
$K$ is \emph{downward-connected to $\infty$ from $x$} if some point $y \in K$ lies in an edge path in $G^\nu \cap (\R \times (-\infty,\pi_2(x)] \times \R^{d-2})$ which escapes every bounded set. Here, $\pi_2:\R^d \to \R$ is the standard second coordinate projection.

\begin{lemma} \label[lemma]{downward connection infinite}
    If $\lam_- > \lam_c$, there exists $c_0=c_0(\lam_-,d)>0$
    such that for all $x \in \R^d$,
    \[
    \Prob(x+[-a/2,a/2]^d \mbox{ is downward-connected to } \infty
    \mbox{ from } x)
    \ge c_0.
    \]
\end{lemma}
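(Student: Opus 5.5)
The plan is to reduce to a homogeneous supercritical Boolean model by monotone coupling, and then use a half-space (or slab) percolation result together with finite energy and the Harris--FKG inequality.

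\emph{Step 1 (coupling).} By \eqref{eq:metric condition}, $\sigma(p,q)\le\|p-q\|$. Hence any pair at Euclidean distance at most $1$ is joined in $G^\nu$. Since $\frac{d\nu}{d\mathcal{L}}\ge\lam_-$ by \eqref{eq:volume condition}, Poisson thinning lets us couple $X^\nu$ with a homogeneous Poisson process of intensity $\lam_-$, so that $G^{\lam_-}_{\R^d}(1)$ (Euclidean edges, threshold $1$) is a subgraph of $G^\nu$. Downward-connection to $\infty$ from $x$ is an increasing event in the graph. It therefore suffices to bound its probability from below for $G^{\lam_-}_{\R^d}(1)$. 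That model is translation invariant, so we may take $x=0$. The resulting bound is then automatically uniform in $x$, in $\nu$ and in $\sigma$, and depends only on $(\lam_-,d)$.

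\emph{Step 2 (percolation in the lower half-space).} Let $H=\R\times(-\infty,0]\times\R^{d-2}$. I would invoke the continuum analogue of the Grimmett--Marstrand theorem: for $\lam>\lam_c(d)$, the Boolean model restricted to a sufficiently thick slab percolates. This is known for the Poisson Boolean model, e.g.\ via Tanemura's slab results or the renormalization behind \cref{non-uniform-Antal-Pisztora}. Concretely, fix $k=k(\lam_-,d)$ such that the slab $S=\R\times[-k,0]\times\R^{d-2}\subseteq H$ a.s.\ contains an infinite component of $G^{\lam_-}_{\R^d}(1)\cap S$. By translation invariance of $S$ in the directions $e_1,e_3,\dots,e_d$, together with ergodicity, some bounded box $B\subseteq S$ near the origin meets this infinite slab cluster with probability $p_1>0$. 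Here $B$ can be taken to be $[-m,m]\times[-k,0]\times[-m,m]^{d-2}$ for some $m$ depending only on $\lam_-,d$.

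\emph{Step 3 (local connection).} Choose a fixed finite chain of cubes $K_{z_0},\dots,K_{z_j}\subseteq H$ with $z_i\in a\Z^d$. It should start at $K_0=[-a/2,a/2]^d$, move downward into $S$, and then fill $B$ in a connected way; the number $j$ depends only on $m,k,d$. Let $F$ be the event that all these cubes are occupied. Adjacent occupied cubes are joined by edges, as noted above. So on $F$, the cube $K_0$ is connected inside $H$ to every point in $B$, and in particular to the infinite slab cluster whenever that cluster meets $B$. The event $F$ has probability at least $(1-e^{-\lam_- a^d})^{j+1}>0$. Both $F$ and ``the infinite cluster of $G^{\lam_-}\cap S$ meets $B$'' are increasing events. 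The Harris--FKG inequality for Poisson processes then gives a lower bound of $c_0:=p_1(1-e^{-\lam_-a^d})^{j+1}$.

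One small point needs care: the cube $K_0$ itself pokes slightly above the hyperplane $\{y_2=0\}$. This is harmless, because the definition only requires some $y\in K_0$ to lie on an escaping path inside $H$; we can simply take $y\in K_0\cap H$ by choosing the chain's occupied points appropriately, or by shifting the first cube down. The main obstacle is Step 2, namely citing (or sketching) slab or half-space percolation at every $\lam>\lam_c$ for the continuum model. If a clean citation is unavailable, I would fall back on a static renormalization: compare with supercritical site percolation on a coarse lattice restricted to a half-space, whose half-space threshold equals the full-space one.
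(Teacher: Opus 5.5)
Your outline is the paper's: thin to the homogeneous Euclidean model $G^{\lam_-}_{\R^d}(1)$, find an infinite cluster inside the lower half-space $H=\R\times(-\infty,0]\times\R^{d-2}$, and attach the origin cube to it via Harris--FKG, using an increasing local event of positive probability. The paper attaches to a single cube $x_0+[-a/2,a/2]^d$ and you to a fully occupied box $B$, but the mechanism is the same.

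\textbf{The gap is Step 2 when $d=2$.} The lemma is needed for every $d\ge 2$, since it feeds the supercritical half of \Cref{need at least rlogr}. For $d=2$ your slab $S=\R\times[-k,0]$ is a strip, and a strip never percolates at any intensity. The events $\{X\cap([3n,3n+2]\times[-k,0])=\emptyset\}$, $n\in\Z$, are independent with probability $e^{-2k\lam_-}>0$. So almost surely infinitely many of them occur with $n>0$ and with $n<0$. Edges of $G^{\lam_-}_{\R^2}(1)$ have length at most $1$, so each such empty block separates the strip. Hence every component of $G^{\lam_-}\cap S$ is bounded and your $p_1$ is $0$. Grimmett--Marstrand/Tanemura-type slab theorems are intrinsically statements for $d\ge 3$. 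There your Step 2 is a valid way to obtain percolation of $G^{\lam}\cap H$, which is what the paper uses, but it gives nothing in the plane.

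So in $d=2$ your ``fallback'' is not optional: you must prove half-plane percolation directly, which is the paper's sketch. The steps are:
\begin{itemize}
\item Define Antal--Pisztora-type good-block events: a crossing cluster, and at most one cluster of large diameter, in a block of side of order $R$.
\item Show their probability tends to $1$ as $R\to\infty$. In the plane this needs planar crossing estimates, not slabs.
\item Use finite-range dependence and Liggett--Schonmann--Stacey to dominate independent site percolation of density close to $1$ on the coarse lattice.
\item Run a Peierls argument in the coarse half-plane. Adjacent good blocks then glue into an infinite component of $G^{\lam_-}\cap H$.
\end{itemize}
The comparison gives density near $1$, not merely ``supercritical'' site percolation, so you do not need equality of half-space and full-space thresholds.

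With this in hand, Step 3 goes through unchanged, with ``some infinite component of $G^{\lam_-}\cap H$ meets $B$'' replacing the slab cluster. Take $B$ and the chain cubes at heights $\pi_2(z_i)\le -a$ so that they genuinely lie in $H$.
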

\begin{proof}
    Note that, by monotonicity, it suffices to prove this statement
    for the homogeneous process $G^{\lam_-} = G^{\lam_-}_{\R^d}(1)$,
    and for that it suffices to show that
    for all $\lam > \lam_c$ we have
    \begin{equation} \label{eq:half space supercritical}
        \Prob( G^{\lam} \cap \R \times (-\infty,0] \times \R^{d-2} \mbox{ has an infinite component}) = 1.
    \end{equation}
    This is because, in case this holds,
    there is some $x_0 \in \R^d$ such that
    $x_0 + [-a/2,a/2]^d$ is connected to infinity in $\R \times (-\infty,0] \times \R^{d-2}$ with positive probability.
    Moreover, 
    \[\Prob([-a/2,a/2]^d \mbox{ is connected to }
     x_0 + [-a/2,a/2]^d
     \mbox{ by a path in }
     {G^\lam \cap \R \times (-\infty,0{]} \times \R^{d-2}}
     ) > 0,
    \]
    and therefore by the FKG inequality
    (see e.g. Theorem 2.2 of \cite{MR1996})
    we see that 
    \[ \Prob([-a/2,a/2]^d \mbox{ is downward-connected to } \infty \mbox{ from } 0) > 0;
    \]
    symmetry then gives the desired result.
    
    One can obtain \eqref{eq:half space supercritical} by mimicking,
    for instance, the proof of Theorem 7.2 in
    Grimmett \cite{Grimmett}.
    Here is a brief sketch of a less general but less complicated argument for \eqref{eq:half space supercritical}.
    As in Antal and Pisztora \cite{AP96}, for a large parameter $R$
    and a point $x \in \Z^d$, consider the event
    \[
        U_{x,R} := \left\{
        \begin{array}{c}
        \mbox{there is at most one component of } G^\lam \cap Rx+[-5R,5R]^d
        \mbox{ of} \\ 
        \mbox{diameter 
        at least } (\log R)^2, 
        \mbox{ and there exists an
        path in } G^\lam { from } \\
        (x+[-4R,4R]^d)^c \mbox{ to } x+[-R,R]^d 
        \end{array} \right\}.
    \]
    Call $x \in \Z^d$ \emph{white} if $U_{x,R}$
    holds. $\Prob(U_{x,R})$ can be made arbitrarily
    close to $1$ by choosing $R$ sufficiently large,
    and the family $\{ U_{x,R}\}_{x \in\Z^d}$
    only has finite range dependence (independent of $R$);
    therefore, by domination by product measures
    (\cite{LSS1997}),
    by choosing $R$ sufficiently large we can ensure
    that the law of the white sites stochastically
    dominates an independent site percolation on 
    $\Z^d$ with $p_0<1$ arbitrarily close to 1.
    Note next that an infinite cluster of white sites
    in $\Z \times ((-\infty,0] \cap \Z) \times \Z^{d-2}$
    implies the existence of an infinite
    cluster in $\R \times [-\infty,R] \times \R^{d-2}$;
    such an infinite cluster exists by a classic Peierls
    argument for sufficiently large $R$.
\end{proof}

Now we want to make a \emph{local} version of the event that the bottom of our constructed loop is connected to the giant, since we will ultimately want to have many independent translates of our event. Let $z_0 = (0,\lfloor -3/a \rfloor a - a,0,...,0)$ be the point in $a\Z^d$ immediately adjacent
to the lowest point of $I$.
To make connection to the giant a local event, we define the half-ball $\mathcal{H}$:
\[
    \mathcal{H} := B(z_0, Q \log R) \cap \R \times (-\infty,\pi_2(z_0)] \times \R^{d-2}.
\]

\begin{lemma} \label[lemma]{downward connection lower bound}
    If $\lam_- > \lam_c$, then for any $Q$ we have
    \[
        \Prob\left( 
        \begin{array}{c}
        z_0 + [-a/2,a/2] \mbox{ is connected to } \\
        B(z_0,Q\log R)^c 
        \mbox{ by a path in } \mathcal{H}
        \end{array}
        \right) \ge c_0
    \]
    where $c_0>0$ is as above.
\end{lemma}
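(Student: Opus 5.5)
The natural route is to reduce the statement to \Cref{downward connection infinite} by an inclusion of events. Apply \Cref{downward connection infinite} with $x = z_0$; I read the box in the statement as $z_0 + [-a/2,a/2]^d$. With probability at least $c_0$, some point $y \in z_0 + [-a/2,a/2]^d$ lies on an edge path in
\[
G^\nu \cap \bigl(\R \times (-\infty,\pi_2(z_0)] \times \R^{d-2}\bigr)
\]
that escapes every bounded set. On this event I will show that the box is connected to $B(z_0,Q\log R)^c$ by a path in $\mathcal{H}$. This gives the claimed lower bound, uniformly in $Q$ and $R$.

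To see the inclusion, take such an infinite edge path $y = v_0, v_1, v_2, \dots$ with all $v_i$ in the lower half-space $\{\pi_2 \le \pi_2(z_0)\}$. Since the path escapes every bounded set, there is a first index $k$ with $v_k \notin B(z_0,Q\log R)$. If $k = 0$, the box already meets $B(z_0,Q\log R)^c$ (only possible if $Q\log R$ is tiny), and the statement is trivial, so assume $k \ge 1$. Then $v_0,\dots,v_{k-1}$ lie in $B(z_0,Q\log R)$ and in the closed lower half-space, hence in $\mathcal{H}$, and the final edge $v_{k-1}v_k$ reaches $B(z_0,Q\log R)^c$.

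The only delicate point is the interpretation of ``connected to $B(z_0,Q\log R)^c$ by a path in $\mathcal{H}$'': the last vertex $v_k$ lies outside the ball, so it is not in $\mathcal{H}$. I would read the event as saying that all vertices of the path except the terminal one lie in $\mathcal{H}$, and the terminal vertex is outside $B(z_0,Q\log R)$ but still in the lower half-space. Equivalently, some vertex of the path in $\mathcal{H}$ has an edge leaving the ball. This is exactly what the construction produces.

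Under this reading, the event is determined by the Poisson process restricted to the $1$-neighborhood of $\mathcal{H}$ intersected with the lower half-space. That locality, rather than the probability bound, is what the later independence argument needs. If a stricter reading is intended, namely a vertex in $\mathcal{H}$ at distance at least $Q\log R - 2$ from $z_0$, the same argument applies after stopping the path one step earlier, since edges have Euclidean length at most $2$ by \eqref{eq:metric condition}. Finally, monotonicity in $\nu$ was already built into the proof of \Cref{downward connection infinite}, so $c_0$ depends only on $\lam_-$ and $d$, as claimed.
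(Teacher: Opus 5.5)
Your proposal is correct and follows the paper's proof, which just says the lemma follows immediately from \Cref{downward connection infinite}; your inclusion of events, obtained by stopping the infinite downward path at its first exit from $B(z_0,Q\log R)$, is the detail that one-line proof leaves implicit. One small slip, which does not affect the bound: your locality aside should refer to the $2$-neighborhood of $\mathcal{H}$, not the $1$-neighborhood, since by \eqref{eq:metric condition} edges can have Euclidean length up to $2$.
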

\begin{proof}
    This follows immediately from \Cref{downward connection infinite}.
\end{proof}

We now define the key event that will imply the existence of large irreducible loops in the giant component.
Recall that $X^\nu$ is the Poisson point process on $\R^d$
with intensity measure $\nu$
which is the vertex set of $G^\nu$.
Define
\begin{equation} \label{eq:big loop event}
    E' := \bigcap_{K \in \mathcal{K}} \{ |X^\nu \cap K| \ge 1 \}
    \cap \{ |X^\nu \cap \mathcal{N}| = 0\}
\end{equation}

\begin{equation} \label{eq:big loop event in giant}
    E := E' 
    \cap \left\{ 
    \begin{array}{c}
    z_0 + [-a/2,a/2] \mbox{ is} \\ \mbox{connected
    to }
    B(z_0, Q\log R)^c \\
    \mbox{by a path in } G^\nu \cap \mathcal{H}
    \end{array}
    \right\}.
\end{equation}

Note that the entire construction above does depend on the choice of the constant $c>0$, and $E$
depends on a choice of $Q \ge 1$, even though we have suppressed this in our notation.
\begin{lemma} \label[lemma]{E and E' lower bound}
    There exists $0<\eta=\eta(\lam_-,\lam_+,d,Q)<1$,
    \emph{not} depending on $c>0$ or $R$, such that
    \[
        \Prob(E') \ge \eta^{c \log R}
    \]
    and if $\lam_- > \lam_c$
    we have
    \[
        \Prob(E) \ge c_0\eta^{c \log R},
    \]
    where $c_0 >0$
    is as in \cref{downward connection infinite}.
\end{lemma}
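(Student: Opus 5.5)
We bound the two events separately, using that the Poisson process assigns independent counts to disjoint regions. For $\Prob(E')$, note that the cubes in $\mathcal{K}$ are disjoint from one another (up to measure-zero boundaries) and from $\mathcal{N}$. Hence
\[
\Prob(E') = \prod_{K \in \mathcal{K}} \Prob(|X^\nu \cap K| \ge 1)\cdot \Prob(|X^\nu \cap \mathcal{N}| = 0).
\]
By \eqref{eq:volume condition}, each factor $\Prob(|X^\nu \cap K|\ge 1) = 1 - e^{-\nu(K)}$ is at least $1 - e^{-\lam_- a^d} =: p_1 \in (0,1)$. Likewise $\Prob(|X^\nu\cap \mathcal{N}|=0) = e^{-\nu(\mathcal{N})} \ge e^{-\lam_+ \mathcal{L}(\mathcal{N})} \ge e^{-\lam_+\mathcal{L}(\tilde{\mathcal{R}})}$.

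The counting step is routine. $\partial\mathcal{R}\cup I$ is a union of segments of total length at most $2c\log R + 15$, and each unit length meets $O_d(1)$ cubes of side $a$. So $|\mathcal{K}| \le C_d(c\log R + 1)$, while $\mathcal{L}(\tilde{\mathcal{R}}) = 12\cdot 6^{d-2}(c\log R+6)$. Combining,
\[
\Prob(E') \ge \exp\bigl(-C'(\lam_-,\lam_+,d)(c\log R + 1)\bigr).
\]
The additive constant must be absorbed into the exponent. For $R \ge e$ and $c \log R \ge 1$ (the only regime of interest, which we assume, e.g.\ for $R$ large depending on $c$), we have $c\log R + 1 \le 2c\log R$. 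Setting $\eta = e^{-2C'}$ then gives $\Prob(E')\ge \eta^{c\log R}$ with $\eta$ independent of $c$ and $R$. If a bound is needed uniformly for small $c\log R$ as well, one could instead allow a prefactor, but the stated form follows under this mild convention.

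For $E$, let $F$ denote the connection event in \eqref{eq:big loop event in giant}. Both $E'$ and $F$ are determined by $X^\nu$, so independence is not automatic. We get around this in two ways.

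First, $F$ depends only on $X^\nu \cap \mathcal{H}$. The region $\mathcal{H}$ lies in the half-space $\{\pi_2 \le \pi_2(z_0)\}$, and this half-space meets $\tilde{\mathcal{R}}$ only in a slab near $\pi_2 \approx -3$. So $F$ and $E'$ can share information only through the part of $\mathcal{N}\cup\bigcup\mathcal{K}$ lying in $\mathcal{H}$.

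Second, we use FKG. $E'$ is an intersection of increasing events, the occupied cubes, with a decreasing event, the vacancy of $\mathcal{N}$. $F$ is increasing. We condition on $\{|X^\nu\cap\mathcal{N}|=0\}$: by the Poisson property, the remaining process is a Poisson process with intensity $\nu$ restricted to $\mathcal{N}^c$. On this restricted process, $\bigcap_K\{|X^\nu\cap K|\ge1\}$ and $F$ are both increasing, so Harris–FKG gives
\[
\Prob(E)\ge \Prob(E')\cdot\Prob(F \mid X^\nu\cap\mathcal{N}=\emptyset).
\]

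The main obstacle is that emptying $\mathcal{N}$ may destroy the connection. The way out is to use the geometry: $z_0$ is strictly below $I$, and $\mathcal{N}$ stays within height $\pi_2 \ge -3$. This depends on the convention for $\tilde{\mathcal{R}}$; if $z_0$ sits just inside $\tilde{\mathcal{R}}$, the cubes adjacent to $z_0$ are in $\mathcal{K}$ by construction, i.e.\ they are required occupied rather than empty. We therefore arrange, possibly by shifting $\mathcal{H}$ down by one lattice step, that $\mathcal{H}\cap\mathcal{N}=\emptyset$. Then $F$ is independent of the vacancy event, and \Cref{downward connection lower bound} gives $\Prob(F\mid\cdot)=\Prob(F)\ge c_0$. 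Hence $\Prob(E)\ge c_0\eta^{c\log R}$.
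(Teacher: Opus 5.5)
Your proof is correct. For $E'$ it is the paper's argument: the Poisson counts on the a.e.\ disjoint cubes of $\mathcal K$ and on the shell $\mathcal N$ are independent, you bound the factors by $1-e^{-\lam_- a^d}$ and $e^{-\lam_+\mathcal L(\tilde{\mathcal R})}$, and you count $|\mathcal K|,\mathcal L(\tilde{\mathcal R})=O_d(c\log R+1)$. Absorbing the additive constant once $c\log R\ge 1$ is exactly what the paper's ``$O_a(c\log R)$ as $R\to\infty$'' leaves implicit. Some such restriction is genuinely needed. $\mathcal N$ always contains a region whose volume is bounded below independently of $c$ and $R$, e.g.\ $[4,6]\times[-3,-1]\times[-3,3]^{d-2}$. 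So $\Prob(E')$ stays bounded away from $1$, while $\eta^{c\log R}\to 1$ as $c\log R\to 0$.

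For $E$ your route differs from the paper's. The paper simply observes that the occupied cubes, the vacant shell $\mathcal N$ and the half-ball $\mathcal H$ are disjoint. Hence the three conditions are independent and $\Prob(E)=\Prob(E')\Prob(F)\ge c_0\eta^{c\log R}$. You instead condition on vacancy of $\mathcal N$ and apply Harris--FKG to the two increasing events, which needs only $\mathcal H\cap\mathcal N=\emptyset$. This is valid, and more robust: it would survive occupied tunnel cubes reaching into $\mathcal H$. Here, though, it is unnecessary.

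What you should not do is ``arrange'' disjointness by shifting $\mathcal H$. That would prove a bound for a different event than the $E$ in the statement. Instead, check disjointness directly from the definitions:
\begin{itemize}
\item $\pi_2(z_0)=\lfloor -3/a\rfloor a-a\le -3-a$, so $\mathcal H\subseteq\{\pi_2\le -3-a\}$.
\item $\mathcal N\subseteq\tilde{\mathcal R}\subseteq\{\pi_2\ge -3\}$.
\item Every cube of $\mathcal K$ meets $\partial\mathcal R\cup I\subseteq\{\pi_2\ge -3\}$, so it lies in $\{\pi_2\ge -3-a\}$. The hyperplane $\{\pi_2=-3-a\}$ is Lebesgue-null.
\end{itemize}
So your remark that the lower half-space meets $\tilde{\mathcal R}$ in a slab is wrong; it misses $\tilde{\mathcal R}$ entirely. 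Full independence therefore holds, and the FKG step collapses to the paper's one-line argument.
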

\begin{proof}
    First, note that by  construction, the three conditions in the 
    event $E$ all depend on disjoint regions of $\R^d$.
    For each $K \in \mathcal{K}$ we have
    \[
        \Prob( |X^\nu \cap K| \ge 1)
        = 1 - e^{-\nu(K)} \ge 1 - e^{-\lam_- a^d}.
    \]
    Since there are $O_a(c \log R)$ cubes in $\mathcal{K}$
    (as $R \to \infty$),
    we then have
    \[
        \Prob( \bigcap_{K \in \mathcal{K}} |X^\nu \cap K| \ge 1)
        \ge (1 - e^{-\lam_- a^d})^{O_a(c \log R)}.
    \]
    Next, we also have
    that the volume of $\mathcal{N}$ is $O_d(c \log R)$,
    and therefore
    \[
        \Prob( |X^\nu \cap \mathcal{N}| = 0)
        = e^{-\nu(\mathcal{N})} \ge e^{-O_d(\lam_+ c \log R)}.
    \]
    Combining the above with \Cref{downward connection lower bound} then gives
    \[
        \Prob(E)
        \ge 
        c_0
        [e^{-\lam_+}(1 - e^{-\lam_- a^d})]^{O_{a,d}(c \log R)},
    \]
    and so an appropriate choice of $0<\eta<1$
    gives our desired result.
\end{proof}

For $x \in \R^d$, let $E'_x,E_x$
be the shift of $E',E$ by $x$, that is
\begin{equation*}
    E_x' := 
    \bigcap_{K \in \mathcal{K}} \{ |X^\nu \cap (K+x)| \ge 1 \}
    \cap \{ |X^\nu \cap (\mathcal{N} + x)| = 0\},
\end{equation*}
\begin{equation*}
    E_x := 
    E_x'
    \cap 
    \left\{ 
    \begin{array}{c}
    z_0 + x + [-a/2,a/2] \mbox{ is connected to } \\
    B(z_0 + x, Q\log R)^c \mbox{ by a path in } G^\nu \cap (\mathcal{H} + x)
    \end{array}
    \right\}.
\end{equation*}
Note that the lower bound in \cref{E and E' lower bound} applies to all $E_x, E_x'$,
with constants not depending on $x$.

\begin{lemma} \label[lemma]{very likely to get at least one loop}
    There exist $c, c_1 > 0$
    depending on $\lam_-,\lam_+,d,Q$
    such that for any $\epsilon >0$
    the following holds. 
    There exists $R_0=R_0(\epsilon,\lam_-,\lam_+,d,Q)$
    such that for all $R \ge R_0$ we have
    \[
        \Prob( E'_x \mbox{ holds for some } x \in [-\epsilon R+Q\log R, \epsilon R - Q \log R]^d)
        \ge 
        1-e^{-c_1 \epsilon^d R}
    \]
    and if $\lam_- > \lam_c(d)$,
    \[
        \Prob( E_x \mbox{ holds for some } x \in [-\epsilon R+Q\log R, \epsilon R - Q \log R]^d)
        \ge 
        1-e^{-c_1 \epsilon^d R}.
    \]    
\end{lemma}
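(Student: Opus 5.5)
We prove both bounds by planting many disjoint translates of the configuration and using independence. The key facts are that $E'_x$ (resp.\ $E_x$) is determined by the restriction of $X^\nu$ (resp.\ $G^\nu$) to a bounded region of diameter $O(\log R)$, and that \Cref{E and E' lower bound} bounds each event below uniformly in $x$.

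\textbf{Step 1: the footprint of a single event.} The event $E'_x$ depends only on $X^\nu \cap (\tilde{\mathcal{R}}+x)$. The event $E_x$ depends only on $X^\nu \cap \big((\tilde{\mathcal{R}} \cup \mathcal{H}) + x\big)$: the path condition concerns only vertices in $\mathcal{H}+x$ and edges among them, and edges are determined by those vertices. In both cases the region lies in a box of side $D := C_0(c+Q)\log R$ around $x$, where $C_0 = C_0(d)$. Taking $R$ large, this box also contains $\mathcal{H}$, since $Q\log R \ge 6$.

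\textbf{Step 2: choosing disjoint translates.} Let $\Lambda := D'\Z^d \cap [-\epsilon R + Q\log R, \epsilon R - Q\log R]^d$, with $D' = 2D$. For distinct $x, x' \in \Lambda$ the footprint regions are disjoint, so the events $\{E_x\}_{x\in\Lambda}$ are mutually independent by the Poisson independence property. For $R \ge R_0(\epsilon,\dots)$ we have $|\Lambda| \ge c_2 \epsilon^d R^d/((c+Q)\log R)^d$ with $c_2 = c_2(d)>0$.

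\textbf{Step 3: the product bound.} By \Cref{E and E' lower bound}, each event has probability at least $p := c_0\eta^{c\log R} = c_0 R^{-c\log(1/\eta)}$. Therefore
\[
\Prob(\text{no } E_x,\ x\in\Lambda) \le (1-p)^{|\Lambda|} \le \exp(-p|\Lambda|) \le \exp\!\left(-c_0c_2\,\epsilon^d\, \frac{R^{d-c\log(1/\eta)}}{((c+Q)\log R)^d}\right).
\]
The argument for $E'_x$ is identical, with $c_0$ replaced by $1$.

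\textbf{Step 4: choosing $c$.} This is the only real point, and it is subtle only because $\eta$ does not depend on $c$, which \Cref{E and E' lower bound} guarantees. Choose $c$ small enough that $c\log(1/\eta) \le d - 1 - 1/2$; this is possible because $d \ge 2$. The exponent in Step 3 is then at least $\epsilon^d R^{3/2}/(\log R)^d$ times constants. For $R \ge R_0$ this exceeds $c_1\epsilon^d R$ with, for example, $c_1 = 1$.

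The only subtle issues are the following. First, $D$ itself depends on $c$; this is harmless, since $c$ only shrinks $D$. Second, the footprints must actually be separated by more than the edge range: edges have $\sigma$-length at most $1 \le$ Euclidean range $2$, so a margin of $2$ suffices, and $D' = 2D$ gives it. Third, for $E_x$ the event $E$ involves connectivity only through paths inside $\mathcal{H}+x$, so no influence crosses between footprints.
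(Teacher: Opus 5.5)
Your proposal is correct and follows essentially the same route as the paper: take a lattice of translates whose $O(\log R)$-size footprints are disjoint, use Poisson independence together with the uniform lower bound $c_0\eta^{c\log R}$ from \Cref{E and E' lower bound}, and then use that $\eta$ does not depend on $c$ to choose $c$ small enough that $d - c\log(1/\eta) > 1$, which beats the $(\log R)^d$ loss in the count of translates. The differences are cosmetic: you use a footprint box of side $O((c+Q)\log R)$ rather than the paper's $x+[-Q\log R,Q\log R]^d$ (the latter requires $c$ small relative to $Q$), and your extra edge-range margin between footprints is a harmless safeguard.
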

\begin{proof}
    First, note that, as long as $c>0$
    is sufficiently small compared to $Q$
    and $Q \log R$ is sufficiently large,
    each $E'_x, E_x$ only depends on 
    $X^\nu \cap (x+[-Q \log R,Q \log R]^d)$.
    In particular, there is some set $S \subseteq [-\epsilon R+Q\log R, \epsilon R - Q \log R]^d$
    with $|S| = \Omega((\epsilon R/Q\log R)^d)$
    such that the family of events $\{E_x\}_{x \in S}$
    is jointly independent.

    Thus, by \cref{E and E' lower bound}, we have
    \begin{align*}
        \Prob\left(
        \begin{array}{c}
        E_x \mbox{ fails for every } \\
        x \in [-\epsilon R+Q\log R, \epsilon R - Q \log R]^d
        \end{array}
        \right)
        &\le
        \Prob(E_x \mbox{ fails for every } x \in S) \\
        &\le
        (1 - c_0 \eta^{c \log R})^{\Omega((\epsilon R/\log R)^d)} \\
        &\le 
        (e^{-c_0 \eta^{c \log R}} )^{C_0 \epsilon^d R^d/(\log R)^d} \\
        &\le 
        \exp( -C_1 \epsilon^d R^{c \log \eta + d} / (\log R)^d).
    \end{align*}
    None of the other constants depend on $c>0$,
    so as long as we choose $c>0$ sufficiently small
    that
    \[
        d + c\log \eta > 1,
    \]
    we get the desired result for $E_x$. The argument for $E'_x$
    is the same.
\end{proof}

Now, we show that $E_x'$ entails the existence 
of a large irreducible loop in $G^n_M(r)$.
More properly, we show that $E_x'$
entails the existence of a cycle
which is not equivalent modulo
small cycles to a cycle
supported near the boundary of $x + [-Q \log R, Q \log R]^d$;
this latter statement
will better transfer to a manifold.

\begin{lemma} \label[lemma]{topology}
    There exists $R_0=R_0(Q,c)< \infty$
    such that for all $R \ge R_0$ the following holds.
    Assume $0 < c < Q/2$ and suppose $E'_x$ holds.
    Then $x + [-Q \log R, Q \log R]^d$
    supports a cycle $z \in Z_1(G^\nu)$
    such that the following hold.
    \begin{enumerate}
        \item If $w \in Z_1(G^\nu)$
        is such that $w-z$ is a sum
        of cycles of diameter
        at most $(c/100) \log R$
        with support lying in $x + [-Q \log R, Q \log R]^d$,
        $|w|$ must intersect
        $x + [-(Q/2)\log R, (Q/2)\log R]^d$.

        \item If, in addition, $E_x$ holds, then $z$ lies in a connected component
        of $G^\nu \cap [-Q \log R, Q \log R]^d$ of Euclidean diameter at least $Q \log R$.
    \end{enumerate}
\end{lemma}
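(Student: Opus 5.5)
The plan is to detect the big loop with an intersection-number cocycle that vanishes on every small cycle. Throughout, translate so that $x=0$ and write $B_Q:=[-Q\log R,Q\log R]^d$. On $E'_0$ every cube of $\mathcal{K}$ is occupied. Adjacent occupied cubes have all their vertices joined by edges, so following the cubes along $\partial\mathcal{R}$ gives an edge-cycle $z\in Z_1(G^\nu)$. Its support lies in the $a$-neighbourhood of $\partial\mathcal{R}$, hence in $B_Q$ once $c<Q/2$ and $R\ge R_0$.

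Two geometric facts drive the argument. First, since $\mathcal{N}$ is empty, every vertex of $G^\nu$ in $\tilde{\mathcal{R}}$ lies in a cube of $\mathcal{K}$. Second, by \eqref{eq:metric condition} every edge is a straight segment of Euclidean length at most $2$. Fix the height $h:=\tfrac{c}{2}\log R$ and the bounded flat piece
\[
H:=\{y: y_2=h,\ y_1\in[-9/2,-3/2],\ |y_j|\le 3/2 \text{ for } j\ge 3\}.
\]
This piece is pierced by the left column $\{-3\}\times[0,c\log R]$ of $\partial\mathcal{R}$ exactly once. For a $1$-chain $w$ of $G^\nu$, let $I(w)\in\mathcal{S}$ be the signed number of edge-segments of $w$ crossing $H$; this is linear in $w$. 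Using the two facts and choosing the constants (perturbing $h$ by less than $a$ if needed), I will check that no edge segment meets $\partial H$ or crosses $H$ tangentially. Hence $I$ is well defined, and $I(z)=\pm1$, since $z$ crosses $H$ once, along the left column, and the right column has $y_1=3\notin[-9/2,-3/2]$.

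The key claim, and the step I expect to be the main obstacle, is that $I(u)=0$ for every cycle $u\in Z_1(G^\nu)$ with $\diam|u|\le (c/100)\log R$ and $|u|\subseteq B_Q$. If $|u|$ misses the slab $\{|y_2-h|\le 1\}$ this is trivial. Otherwise $|u|$ lies in the height window $h\pm(c/50)\log R$, strictly between the top and bottom of $\mathcal{R}$. Let $Y$ be the union of all edge segments with at least one endpoint in this window. Inside $\tilde{\mathcal{R}}$ these segments stay within distance about $2$ of the two vertical columns $\{y_1=\pm3,\ y_j=0 \ (j\ge 3)\}$. All other segments have both endpoints outside $\tilde{\mathcal{R}}$. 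I would show $I$ is a coboundary on $Y$ by exhibiting a function $f$ on vertices with $I(e)=f(v_1)-f(v_0)$. Set $f=1$ on vertices of the left column above height $h$ and $f=0$ on all other vertices. This works because the only edges crossing $H$ join the left column below $h$ to the left column above $h$. Edges leaving $\tilde{\mathcal{R}}$ never meet $H$, since $H$ sits at distance $3/2$ from $\partial\tilde{\mathcal{R}}$ in the $y_1$ and $y_j$ ($j\ge 3$) directions, and such edges cannot reach the column at height $\approx h$ without passing through $\mathcal{N}$. Similarly no edge joins the left column above $h$ to anything other than the left column, within the window. With $f$ in hand, $I(u)=\sum f(\partial u)=0$ for every cycle supported in $Y$. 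Making this ``no stray edge'' bookkeeping precise with the explicit constants ($a=\tfrac{1}{2\sqrt d}$, the shell thickness $3$, edge length $\le 2$) is where the care is needed. If the bookkeeping fails, the fallback is to shrink $H$ to a thin disc around the left column and redo the check.

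Given the claim, part (1) follows. If $w-z=\sum_i u_i$ with each $u_i$ small and supported in $B_Q$, then $I(w)=I(z)=\pm1\neq0$. So some edge of $w$ crosses $H\subseteq[-6,6]\times\{h\}\times[-3,3]^{d-2}$, which lies inside $[-(Q/2)\log R,(Q/2)\log R]^d$ because $h=\tfrac{c}{2}\log R<\tfrac{Q}{2}\log R$. For part (2), on $E_0$ the occupied cubes along $I$ join the bottom of the loop to $z_0+[-a/2,a/2]^d$. That cube is joined, inside $\mathcal{H}\subseteq B_Q$, to $B(z_0,Q\log R)^c$, so the component of $G^\nu\cap B_Q$ containing $z$ has Euclidean diameter at least $Q\log R-O(1)$. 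Enlarging the radius in the definition of $\mathcal{H}$ by a constant, or absorbing it into $R_0$, gives the stated bound $Q\log R$.
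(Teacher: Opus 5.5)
Your argument for part (1) is correct, but it takes the dual route to the paper's. The paper works on the homology side. On $E'_x$ no vertex or edge meets $\mathcal{N}'=\{y: d(y,\mathcal{N}^c)>1\}$, so there is a map $Z_1(G^\nu)\to H_1(\R^d\setminus\mathcal{N}')$. The paper then asserts, without detailed proof, that cycles of diameter at most $(c/100)\log R$ bound in $\R^d\setminus\mathcal{N}'$. It identifies the homotopy type of $\R^d\setminus\mathcal{N}'$ separately for $d=2$ and $d\ge 3$, and concludes because $[\partial\mathcal{R}]$ is not in the image of $H_1$ of the complement of the middle box.

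Your functional $I$, the signed crossings with $H$, is in effect the intersection/linking cocycle that detects $[\partial\mathcal{R}]$ in that group. Instead of appealing to homotopy types, you prove $I(u)=0$ for small $u$ by writing $I$ as an explicit coboundary of $f$ on the height window. This uses only three facts: $\mathcal{N}$ is empty, edges have length at most $2$, and $a<1$. The last gives column vertices distance $>2$ from the other column and from $\R^d\setminus\tilde{\mathcal{R}}$.

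What your route buys:
\begin{itemize}
\item a self-contained proof, uniform in $d$, of exactly the facts the paper leaves as assertions;
\item a sharper conclusion: $w$ must cross the specific disc $H$;
\item no use of the hypothesis that the small cycles are supported in $x+[-Q\log R,Q\log R]^d$.
\end{itemize}
The paper's formulation, a map from $Z_1(G^\nu)/Z_1^{(c/100)\log R}(G^\nu)$ into the homology of the complement, is shorter and more conceptual. It is also the version more likely to adapt to higher-dimensional analogues.

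In part (2) two of your details are off, though both are easy to repair.
\begin{itemize}
\item $\mathcal{H}$ is not contained in $[-Q\log R,Q\log R]^d$: it protrudes about $3+a$ below $\{y_2=-Q\log R\}$.
\item An additive loss $Q\log R-O(1)$ cannot be absorbed into $R_0$.
\end{itemize}
The fix is to use the fact that the component also contains the top of the loop, at height about $c\log R$. There are two cases.
\begin{itemize}
\item If the path in $\mathcal{H}$ stays in the box until it comes within $O(1)$ of $\partial B(z_0,Q\log R)$, its far end $p$ satisfies $p_2\le \pi_2(z_0)$. Choosing the top corner of the loop on the side opposite to $p_1$ makes the cross term nonpositive. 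This gives distance at least $\sqrt{Q^2+c^2}\,\log R-O(1)$.
\item Otherwise the path leaves the box first. Since $\mathcal{H}$ stays within $Q\log R$ of $z_0$ in the other coordinates, it must exit through the bottom face. This gives distance at least $(Q+c)\log R-O(1)$.
\end{itemize}
Either bound is at least $Q\log R$ for large $R$.

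Like the paper, you also implicitly assume that $z_0+[-a/2,a/2]^d$ is adjacent to the lowest cube of $\mathcal{K}$ on $I$. This depends on the rounding in the definition of $z_0$, and it fails for some $d$, for example $d=2$.
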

\begin{proof}
    First, note
    that on $E'_x$, since $(x + \mathcal{N}) \cap X^\nu = \emptyset$, 
    we have
    $(x + \mathcal{N}') \cap G^\nu = \emptyset$, where we define
    \[
        \mathcal{N}'
        := \{ y \in \R^d : \forall w \in \mathcal{N}^c, \|y - w\| > 1 \}.
    \]
    Therefore, we have a well-defined
    map
    \[
        Z_1(G^\nu) \to H_1(\R^d \setminus \mathcal{N}').
    \]
    Note also that any cycle in $\R^d \setminus \mathcal{N}'$
    which has diameter at most $(c/100)\log R$
    is necessarily the boundary of some element
    of $C_2(\R^d \setminus \mathcal{N}')$.
    Therefore, the above map factors through a
    well-defined map
    \begin{equation} \label{eq:well-defined-map}
        Z_1(G^\nu) / Z_1^{(c/100) \log R}(G^\nu)
        \to H_1(\R^d \setminus \mathcal{N}'),
    \end{equation}
    where, as always,
    $Z_1^{(c/100)\log R}(G^\nu)$
    is the submodule generated by the cycles
    of diameter at most $(c/100 \log R)$.

    Now, note that on $E'_x$, since each
    $K \in \mathcal{K}$ is occupied,
    there exists $z \in Z_1(G^\nu)$
    which has the same image
    as $\partial \mathcal{R}$
    under the map to $H_1(\R^d \setminus \mathcal{N}')$.
    
    To show that (1) holds, 
    let $w \in Z_1(G^\nu)$
    be such that
    $$w-z \in Z_1^{(c/100) \log R)}(G^\nu \cap x+[-Q\log R,Q\log R]^d).$$
    Suppose \emph{per contra}
    that $|w| \subseteq \R^d \setminus (x+[-(Q/2) \log R, (Q/2) \log R]^d)$.
    Then $w$ and $z$ map to different elements of
    $H_1(\R^d \setminus \mathcal{N}')$.
    If $d \ge 3$, one can see this
    because $\partial \mathcal{R}$
    represents a nonzero element
    of $H_1(\R^d \setminus \mathcal{N}')$, while
    $H_1(\R^d \setminus (x+[-(Q/2) \log R, (Q/2) \log R]^d)) = 0$
    (the space deformation retracts onto $S^{d-1}$,
    which is simply connected).
    If $d=2$, $\R^d \setminus \mathcal{N'}$
    is homotopy equivalent to a bouquet of circles,
    and one can check that one of the circles
    is represented by $\partial \mathcal{R}$,
    and the image of $H_1(\R^d \setminus (x+[-(Q/2) \log R, (Q/2) \log R]^d))$ in $H_1(\R^d \setminus \mathcal{N}')$
    lies in the submodule generated by the other circle.
    
    However, the fact that \eqref{eq:well-defined-map} is a well-defined map
    means that $w$ and $z$ must have
    the same image in $H_1(\R^d\setminus \mathcal{N}')$, so this is a contradiction.
    Thus we have shown that (1) holds.

    For (2), simply note that on the event $E_x$, $z$ is connected to a path 
    which escapes $B(x+z_0,Q \log R)$,
    and so lies in a component of large diameter.
\end{proof}

Now we can show the desired statement for
$G^n_M(r)$.
\begin{prop} \label[prop]{need at least rlogr Poisson version}
    Let $0< \lowerlambda \le \upperlambda < \infty$.
    Then there exist $q>0, c_2 > 0$
    depending on $\lowerlambda,\upperlambda,d$
    such that for all sufficiently small $r > 0$,
    we have
    \begin{equation} \label{eq:need at least rlogr Poisson subcritical}
        \Prob\left( 
        \begin{array}{c} 
        \exists z \in \ker(Z_1(G^n_M(r)) \to H_1(M)) \\
        \mbox{ such that } z \notin Z_1^{q r|\log r|}(G^n_M(r))
        \end{array}
        \right)
        \ge 1 - e^{-c_2 r^{-1}}.
    \end{equation}
    Moreover, if $\lowerlambda > \lam_c$, for any $\kappa > 0$,
    there exists $q>0$
    depending on $\lowerlambda,\upperlambda,d,\kappa$ such that for all sufficiently small $r>0$,
    if $\lowerlambda \le nr^d \le \upperlambda$
    we have
    \begin{equation} \label{eq:need at least rlogr Poisson supercritical}
        \Prob\left( 
        \begin{array}{c} 
        \exists z \in \ker(Z_1(\mathring{G}^n_M(r)) \to H_1(M)) \\
        \mbox{ such that } z \notin Z_1^{q r|\log r|}(G^n_M(r))
        \end{array}
        \right)
        \ge 1 - r^\kappa.
    \end{equation}
\end{prop}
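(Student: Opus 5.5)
The plan is to transfer \Cref{very likely to get at least one loop} and \Cref{topology} to $M$ through a single coordinate patch. Fix $x_0 \in M$ and $\eta>0$ small. Take $U = B_M(x_0,\delta)$ with normal coordinates $u$ as in \Cref{lem:almostflat}, with $\delta < \mathrm{inj}(M)/2$. By \Cref{lem:identify-G-S}, with $R := (e^{-\eta}r)^{-1}$, the rescaled graph $\Ghat_S$ equals $G^\nu \cap R\,S$. Here $\sigma$ satisfies \eqref{eq:metric condition}, and $\nu$ satisfies \eqref{eq:volume condition} with $\lam_- = \lowerlambda e^{-(d+1)\eta}$ and $\lam_+ = \upperlambda e^{(d+1)\eta}$. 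In the supercritical case we choose $\eta$ so that $\lam_- > \lam_c$.

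Fix $Q \ge 1$, and let $c$ and $c_1$ be as in \Cref{very likely to get at least one loop}, with $c<Q/2$. Take $\epsilon$ small so that the cube $[-\epsilon R, \epsilon R]^d$ lies well inside $R\,S$, at distance $\gg Q\log R$ from its boundary. Then with probability at least $1-e^{-c_1\epsilon^d R} \ge 1-e^{-c_2 r^{-1}}$ some $E'_x$ holds, and \Cref{topology} gives a cycle $z \in Z_1(G^\nu)$ supported in $x+[-Q\log R,Q\log R]^d$. Pull $z$ back via $u^{-1}$ and the rescaling to a cycle $z_M \in Z_1(G^n_M(r))$. Its support lies in a ball of radius $O(r\log R) < \mathrm{inj}(M)$, so $z_M \in \ker(Z_1(G^n_M(r)) \to H_1(M))$. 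Set $q := c e^{-3\eta}/200$, so that any $d_M$-diameter $\le q r|\log r|$ cycle corresponds in the rescaled picture to a Euclidean diameter $\le (c/100)\log R$ cycle.

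The main step is showing $z_M \notin Z_1^{qr|\log r|}(G^n_M(r))$. Suppose $z_M = \sum_j y_j$ with each $\diam |y_j| \le q r|\log r|$. Split the sum into the cycles $y_j$ whose support meets the pulled-back box $x+[-Q\log R,Q\log R]^d$ and those that do not.
\begin{itemize}
\item For the cycles that meet the box, the support lies within distance $O(qr|\log r|)$ of the box, hence inside the coordinate patch. After a slight enlargement of the box to $x+[-(Q+1)\log R,(Q+1)\log R]^d$, they correspond to small cycles of $G^\nu$ supported in the enlarged box.
\item The remaining cycles sum to $w_{\mathrm{out}}$, whose support avoids the box.
\end{itemize}
Then $w := z - (\text{first part})$ equals the image of $w_{\mathrm{out}}$ restricted to the patch. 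So $w$ differs from $z$ by small cycles supported in the (enlarged) box, while $|w|$ misses $x+[-(Q/2)\log R,(Q/2)\log R]^d$. This contradicts item (1) of \Cref{topology}.

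Two technical points need care. First, the cycles in $w_{\mathrm{out}}$ may leave the patch; only their restriction, which avoids the box, matters. The restriction of $w_{\mathrm{out}}$ to the patch is still a cycle there, because edge coefficients near the box sum consistently. If that argument is awkward, I would instead rerun the homological argument of \Cref{topology} directly on $M$. That route uses the map $Z_1(G^n_M(r)) \to H_1(M \setminus u^{-1}(\mathcal{N}'))$ and a Mayer–Vietoris comparison showing that $\partial\mathcal{R}$ stays nontrivial there and is not in the image from outside the box. Second, \Cref{topology} is stated with the box $[-Q\log R,Q\log R]^d$, which must absorb the enlargement. I would handle this by applying it with $Q$ replaced by $2Q$, since its proof only uses the shell structure.

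For \eqref{eq:need at least rlogr Poisson supercritical}, repeat the argument with $E_x$ in place of $E'_x$. By item (2) of \Cref{topology}, $z$ lies in a component of Euclidean diameter $\ge Q\log R$, which translates to $d_M$-diameter $\ge e^{-2\eta}Q r|\log r| \cdot(1-o(1))$. Choose $Q$ larger than $e^{3\eta}Q_3$, where $Q_3 = Q_3(\kappa,\lowerlambda,d)$ is from \Cref{A_3 likely}. Then on $A^3_{Q_3,\rho}$, which fails with probability at most $r^{\kappa}$, this component must be $\mathring{G}^n_M(r)$. The total failure probability is therefore $r^\kappa + e^{-c_2/r} \le 2r^\kappa$, and adjusting $\kappa$ gives the claim. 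Note that $q$ now depends on $\kappa$ through $Q$, and hence through $c$.
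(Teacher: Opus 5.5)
Your proposal is correct and follows essentially the same route as the paper. Both use a single normal-coordinate patch, apply \Cref{very likely to get at least one loop} and \Cref{topology} in the rescaled picture, pull back, and reach a contradiction by splitting a putative small-cycle decomposition according to position relative to the box. Both treat the supercritical case via $E_x$ and \Cref{A_3 likely}.

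The only real difference is the splitting criterion. The paper sorts small cycles by whether they are \emph{contained} in the box. The removed part then already lies in the box, and the remainder lies in the box within $qr|\log r|$ of its complement, so no enlargement is needed. Your ``meets the box'' split instead forces the $Q\to 2Q$ adjustment, which works. Your first technical worry is moot: $w_{\mathrm{out}} = z_M - (\text{first part})$ is automatically supported in the enlarged box, hence in the patch.
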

\begin{proof}
    Let $x_0 \in M$.
    Using \Cref{lem:almostflat},
    choose $\epsilon_0 > 0$
    sufficiently small that
    the geodesic normal coordinates map
    \[
        u: U := B_M(x_0, \epsilon_0) \to B(0,\epsilon_0) =: S \subseteq \R^d
    \]
    has the following properties.
    For some $0<\eta<1/10$,
    \[
        |\log (d_M(x,y) / \|u(x) - u(y)\|| \le \eta
    \]
    for all $x,y \in U$.
    If $\lowerlambda > \lam_c$,
    we also require that $\eta$
    be sufficiently small that
    $e^{-(d+1)\eta} \underline{\lam} > \lam_c$.
    Moreover, $\epsilon_0 > 0$
    is sufficiently small that the pushforward measure $u_* dM$ satisfies
    \[
        e^{-\eta} \le \frac{d u_*(dM)}{d\mathcal{L}} \le e^\eta
    \]
    on $B(0,\epsilon_0)$,
    where $\frac{d u_*(dM)}{d\mathcal{L}}$
    is the Radon-Nikodym derivative of 
    $u_*(dM)$ with respect to Lebesgue measure.

    Now let $G_S = u(G^n_M(r) \cap U)$,
    where $U = B(x_0, \epsilon_0)$,
    and note that,
    by \cref{lem:identify-G-S},
    $(e^{-\eta} r)^{-1} u(G_S)$
    is equal in distribution 
    to $G^\nu \cap B(0, (e^{-\eta} r)^{-1} \epsilon_0)$
    for $\nu, \sigma$
    satisfying our conditions
    \eqref{eq:metric condition}
    and \eqref{eq:volume condition}
    if we take
    $\lam_- = e^{-\eta (d+1)} \underline{\lam}$,
    $\lam_+ = e^{\eta (d+1)} \overline{\lam}$.
    Let us couple $G_S$ and $G^\nu$
    so that this is an almost sure equality.
    Then, take $\epsilon > 0$
    sufficiently small that
    $[-\epsilon/r, \epsilon/r]^d \subseteq B(0, (e^{-\eta} r)^{-1} \epsilon_0)$
    (and note that this choice depends on $\epsilon_0$ and $\eta$ but not $r>0$).

    If $\lowerlambda > \lam_c$, let $Q < \infty$ be sufficiently large that
    \[
    \Prob
    \left(
    \begin{array}{c}
    \exists \mbox{ more than one component of } G^n_M(r) \\ \mbox{of } d_M\mbox{-diameter at least } (Q/2) r |\log r|
    \end{array}
    \right) = o(r^{\kappa}),
    \]
    which is possible
    by \Cref{A_3 likely}.
    If $\lowerlambda \le \lam_c$,
    take $Q=2c$.
    Letting $c>0, c_1>0$ be as in \Cref{very likely to get at least one loop}, 
    letting $R = 1/e^{-\eta}r$,
    and letting $E'_x, E_x$ be defined as in \eqref{eq:big loop event}, \eqref{eq:big loop event in giant},
    we have that
    \[
        \Prob( E'_x \mbox{ holds for some } x \in [-\epsilon R+Q\log R, \epsilon R - Q \log R]^d)
        \ge 1 - e^{-c_1 \epsilon^d R},
    \]
    and the same bound
    holds for $E'_x$
    replaced by $E_x$
    if $\lowerlambda > \lam_c$.
    So assume that $E'_x$ holds for some
    $x \in [-\epsilon R+Q\log R, \epsilon R - Q \log R]^d$.
    By \Cref{topology}, there then exists
    $z \in Z_1(G^\nu \cap (x + [-Q \log R, Q \log R]))$
    such that  
    if $w \in Z_1(G^\nu)$
    and $w - z \in Z_1^{(c/100) \log R}(G^\nu \cap (x + [-Q \log R, Q \log R]^d))$,
    then $w$ must intersect $[-(Q/2) \log R, (Q/2) \log R]^d$.

    We claim that
    if $r>0$ is sufficiently small,
    setting
    \[
    z' := [(e^{-\eta}r)u]^{-1}(z),
    \]
    we have $z' \in \ker ( Z_1(G^n_M) \to H_1(M))$
    and $z' \notin Z_1^{(c/10000) r |\log r|}(G^n_M(r))$.

    The first claim is immediate because
    $z'$ has $d_M$-diameter $O(r |\log r|) = o(1)$.
    
    To see the second claim, suppose the 
    contrary,
    and write $z' = \sum_{i=1}^N z'_i$,
    where each $z'_i \in Z_1(G^n_M(r))$
    has $d_M$-diameter
    at most $c/10000$.
    Then set
    \[
        \mathcal{Z} = 
        \{ z'_i : |z'_i| \subseteq [(e^{-\eta}r)^{-1}u]^{-1}([-Q \log R,Q \log R]^d) \}.
    \]
    Define
    \[
        w' := z' - \sum_{z'_i \in \mathcal{Z}} z'_i
        = \sum_{z'_i \notin \mathcal{Z}} z'_i.
    \]
    Note that the description on the left shows that
    $|w'| \subseteq [(e^{-\eta}r)^{-1}u]^{-1}([-Q \log R,Q \log R]^d)$,
    while the description on the right shows that all of $|w'|$
    lies within $d_M$-distance
    $(c/10000)r|\log r|$
    of the complement 
    of $[(e^{-\eta}r)^{-1}u]^{-1}([-Q \log R,Q \log R]^d)$.
    Thus we can define 
    \[
        w := (e^{-\eta}r)^{-1}u(w')
        = z - \sum_{z_i' \in \mathcal{Z}} (e^{-\eta}r)^{-1}u(z'_i)
    \]
    and we see that
    $|w| \subseteq [-Q \log R, Q \log R]^d$,
    and by the metric distortion
    bound for $u$,
    $|w|$ lies within Euclidean distance at most $(c/100) \log R$
    of $[-Q \log R, Q \log R]^d$;
    in particular,
    $|w|$ does not intersect
    $[-(Q/2) \log R, (Q/2) \log R]^d$.
    But again the metric distortion bound
    shows that $w-z$
    is generated by cycles
    of Euclidean diameter
    at most $(c/100)\log R$,
    so we have contradicted
    the condition
    on $z$ guaranteed us
    by \Cref{topology}.
    Thus $z' \notin Z_1^{(c/10000)r|\log r|}$,
    as claimed.
    Taking $q=c/10000$
    and $c_2 = c_1 \epsilon^d$,
    we have established
    \eqref{eq:need at least rlogr Poisson subcritical}
    (where the $z$ mentioned
    in the statement is given by what we have called $z'$
    here in the proof).
    
    For \eqref{eq:need at least rlogr Poisson supercritical},
    assume $\lowerlambda > \lam_c$, and  
    follow the same 
    argument as above,
    but with $E'_x$ replaced by $E_x$.
    With probability at least $1 - o(r^\kappa)$,
    we again obtain $z \in Z_1(G^\nu)$
    and a corresponding 
    $z'
    \in Z_1(G^n_M(r))$
    which lies in the kernel
    of the natural
    map $Z_1(G^n_M(r)) \to H_1(M)$
    but which does not 
    lie in $Z^{qr |\log r|}_1(G^n_M(r))$.
    Moreover,
    by \Cref{topology},
    this time, the constructed $z$
    lies in a component
    of $G^\nu \cap [-Q \log R, Q \log R]^d$
    of diameter at least $Q \log R$.
    But then our distortion bound on $u$
    implies that $z'$
    lies in a component of $G^n_M(r)$
    of diameter at least
    $(Q/2)r|\log r|$.
    By our choice of $Q$,
    off of an event of probability $o(r^\kappa)$, this must be
    the giant component.
    So we are done.
\end{proof}

Lastly, we prove \cref{need at least rlogr} as stated, using the standard ``de-Poissonization''
we performed for the other main theorems:
\begin{proof}[Proof of \Cref{need at least rlogr}]
    Fix $\kappa > d/2$.
    Given $0<\lowerlambda \le \upperlambda < \infty$,
    choose $q>0$
    as in \Cref{need at least rlogr Poisson version}
    so that
    \[
    \Prob( \ker(Z_1(G^n_M(r)) \to H_1(M)) \subseteq Z_1^{q r|\log r|}(G^n_M(r)))
    \le \exp(-c_2 r^{-1}) = o(r^\kappa)
    \]
    and in case $\lowerlambda > \lam_c$,
    \[
    \Prob( \ker(Z_1(\mathring{G}^n_M(r)) \to H_1(M)) \subseteq Z_1^{q r|\log r|}(G^n_M(r)))
    = o(r^\kappa).
    \]
    Then, again, since
    the law of $G_M(n;r)$
    is equal to the law of $G_M^n(r) | N =n$,
    where $N \sim \mathrm{Poi}(n)$,
    we have
    \begin{align*}
        &\Prob( \ker(Z_1(G_M(n;r)) \to H_1(M)) \subseteq Z_1^{q r|\log r|}(G_M(n;r))) \\
        &=
        \Prob( \ker(Z_1(G^n_M(r)) \to H_1(M)) \subseteq Z_1^{q r|\log r|}(G^n_M(r)) | N = n) \\
        &\le O(n^{-1/2} r^\kappa)
        = O(n^{(1/2)-(\kappa/d)}),
    \end{align*}
    which tends 
    to $0$ as as $n \to \infty$
    by our choice of $\kappa$.
    The same argument 
    shows that
    $\ker(Z_1(\mathring{G}_M(n;r)) \to H_1(M)) \not\subseteq Z_1^{q r|\log r|}(G_M(n;r))$
    with high probability
    in case $\lowerlambda > \lam_c$,
    as desired.

    The above shows 
    that with high probability, the natural 
    maps
    \[
    Z_1(G_M(n;r))/Z_1^{qr|\log r|}(G_M(n;r)) \to H_1(M),
    \qquad 
    Z_1(\mathring{G}_M(n;r)/Z_1^{qr|\log r|}(\mathring{G}_M(n;r)) \to H_1(M)
    \]
    have nontrivial kernel
    and hence
    are not isomorphisms.
    If we want the stronger
    statement
    that in the supercritical regime,
    with high probability neither $Z_1(G_M(n;r))/Z_1^{qr|\log r|}(G_M(n;r))$
    nor $Z_1(\mathring{G}_M(n;r)/Z_1^{qr|\log r|}(\mathring{G}_M(n;r))$
    is isomorphic to 
    $H_1(M)$ (via \emph{any} map),
    we can deduce this quickly from the
    following two facts. 
    First, the natural maps
    are with high probability
    surjective (by \cref{surjectivity}).
    Second, we have the
    following well-known purely algebraic
    fact: if $f:A \to A$
    is a surjective homomorphism
    of finitely generated
    modules over a commutative ring
    $\mathcal{S}$,
    then $f$ is an isomorphism.\footnote{This
    fact is due to
    Vasconcelos \cite{Vasconcelos}
    and can quickly be shown using Nakayama's Lemma.
    }
    Combining these shows that,
    with high probability,
    since the above maps
    are surjective but not isomorphisms,
    the source and the target 
    cannot be isomorphic.\footnote{For brevity,
    write $A$
    for either $Z_1(G_M(n;r))/Z_1^{qr|\log r|}(G_M(n;r))$
    or $Z_1(\mathring{G}_M(n;r)/Z_1^{qr|\log r|}(\mathring{G}_M(n;r))$,
    and note that $A$ is by construction
    a finitely generated module
    over the commutative ring $\mathcal{S}$.
    We have shown that with high probability, the natural map 
    $f:A \to H_1(M)$
    is surjective but not an isomorphism.
    Now suppose \emph{per contra}
    that 
    there exists some
    isomorphism $\iota: H_1(M) \to A$.
    Then the 
    composition $\iota \circ f:A \to A$
    is a surjective homomorphism
    which is not an isomorphism,
    a contradiction.
    }
\end{proof}

\section{Homology of the \v{C}ech and Vietoris-Rips complexes} \label{sec:complexes}

Rather than using a \emph{graph} as a combinatorial model of a manifold,
it is arguably more natural to use a \emph{simplicial complex},
since this 
can encode higher-dimensional information about the manifold.
Recall that a \emph{simplicial complex}
is a pair $(X,\mathcal{F})$,
where $X$ is a set, and $\mathcal{F} \subseteq 2^X$
is a collection of subsets of $X$ such that if $S \in \mathcal{F}$ and $S' \subseteq S$, then $S' \in \mathcal{F}$.
Geometrically, we think of $S \in \mathcal{F}$
with $|S|=k$ as a \emph{$(k-1)$-simplex}, and $S' \subseteq S$ are faces of $S$. (In particular $x \in S$
is a vertex in $S$).
Simplicial homology can be defined for general simplicial complexes (see e.g. Chapter 2.1 of Hatcher \cite{Hatcher}),
and so one may hope to detect the homology of a manifold $M$ better by
constructing a combinatorial model of $M$
which is a \emph{simplicial} complex
rather than a graph.
We describe and consider the two most popular constructions below.

Let $X \subseteq M$
be a (typically discrete) subset of a metric manifold, $r>0$.
The \emph{\v{C}ech complex}
$\Cech_{r}(X)$  has underlying set $X$,
and a subset $\{s_0,...,s_k\} \subseteq X$
lies in $\mathcal{F}$
if and only if 
$\bigcap_{i=0}^k B(s_i,r)$
is nonempty.
If $r>0$ is sufficiently small
that any nonempty intersection of the balls $B(x,r)$ is contractible, then the Nerve Theorem \cite{Borsuk1948} says
that
the simplicial homology of $\Cech_r(X)$
is isomorphic to the singular homology of
$\bigcup_{x \in X} B(x,r) \subseteq M$.
This straightforward geometric interpretation makes $\Cech_r(X)$
a popular candidate for a combinatorial model of a manifold $M$.

A less computationally expensive complex is the \emph{Vietoris-Rips complex} $VR_r(X)$,
whose underlying set is $X$,
and whose faces $S \in \mathcal{F}$ are precisely
the finite subsets of $X$ of diameter less than $r$.

Note that $\Cech_{r/2}(X)$ and $VC_r(X)$ have the same $1$-skeleton, i.e. the same set of $1$-simplices. Moreover, if $X=X^n$ is the point set associated to a Poisson point process on $M$ with intensity measure $ndM$, then this $1$-skeleton is precisely the random geometric graph $G^n_M(r)$.

One might hope that, since $VC_r(X^n)$ and $\Cech_{r/2}(X^n)$
incorporate $2$-dimensional information about the space $M$, their first homology groups more closely resemble $H_1(M)$ than those of $G^n_M(r)$.
However, in the thermodynamic regime, this can only be true in a very weak sense,
as we now show.
The intuitive idea is that, in order to make $Z_1(G_M^n(r))$ match $H_1(M)$, we must quotient out by cycles
of diameter $\Theta(r|\log r|)$;
but the $2$-simplices we
add to $G_M^n(r))$ to construct the \v{C}ech or Vietoris-Rips complex all have diameter
only $O(r)$.

For the following, let $\mathcal{C} \in \{\Cech_{r/2}(X^n), VR_r(X^n)\}$
be either the \v{C}ech or Vietoris-Rips complex associated to a Poisson point process on $M$
with intensity measure $ndM$.\footnote{If one likes, one can also just consider the giant component $\mathring{\mathcal{C}}$ of $\mathcal{C}$, and appropriate modifications of the same statements hold by the same arguments.
Also, standard de-Poissonization arguments show the same result when the point set $X$ consists of $n$ independently sampled points, rather that a Poisson point process.}
First, note that since $G^n_M(r)$ is the $1$-skeleton of $\mathcal{C}$,
we have
\[
    Z_1(\mathcal{C}) = Z_1(G_M^n(r)).
\]
Moreover, 
the boundary of any individual $2$-simplex in $\mathcal{C}$
has $d_M$-diameter at most $r$.
That is, for any $s \ge r$ we have
$B_1(\mathcal{C}) \subseteq Z_1^s(\mathcal{C}) = Z_1^s(G_M^n(r))$,
and therefore there exists a 
a well-defined surjection
\[
    H_1(\mathcal{C}) 
    \to
    Z_1(G^n_M(r))/Z_1^s(G_M^n(r)).
\]
If we define
\[
    H_1^s(\mathcal{C}) 
    :=
    \im(Z_1^s(\mathcal{C}) \to H_1(\mathcal{C}))
    =
    \im( Z_1^s(G^n_M(r)) \to H_1(\mathcal{C})),
\]
then for $s \ge r$ the map above induces
a well-defined \emph{isomorphism}
\begin{equation} \label{eq:complex iso}
    H_1(\mathcal{C})/H_1^s(\mathcal{C})
    \cong
    Z_1(G_M^n(r))/Z_1^s(G_M^n(r)).
\end{equation}

One can quickly deduce the following facts from the above and our main theorems:

\begin{cor}
Fix $\lam \in (0, \infty)$
and let $r = (\lam/n)^{1/d}$.
\begin{enumerate}
    \item If $\lam > \lam_c$,
    then with high probability as $n \to \infty$, the natural map $H_1(\mathcal{C}) \to H_1(M)$ is surjective.
    If $\lam < \lam_c$,
    with high probability
    it is the zero map.
    \item If $\lam > \lam_c$,
    there exists $1 \le Q' < \infty$
    such that $H_1(M) \cong H_1(\mathcal{C})/H_1^{Q' r|\log r|}(\mathcal{C})$
    with high probability as $n \to \infty$.
    \item For fixed $\lam < \infty$,
    there exists $q(\lam) > 0$
    such that, with high probability as $n \to \infty$,
    the natural map
    $H_1(\mathcal{C})/H_1^{qr|\log r|}(\mathcal{C}) \to H_1(M)$
    has nontrivial kernel (in particular, it is not an isomorphism).
    In this case, the natural map
    $H_1(\mathcal{C}) \to H_1(M)$ 
    is also not an isomorphism.\footnote{This last result also follows from Bobrowski \cite{Bobrowski22}.}
\end{enumerate}
\end{cor}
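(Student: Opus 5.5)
The plan is to transfer each main theorem to $\mathcal{C}$ through the identity $Z_1(\mathcal{C}) = Z_1(G^n_M(r))$ and the isomorphism \eqref{eq:complex iso}. The key observation is that the natural map $H_1(\mathcal{C}) \to H_1(M)$ factors as $H_1(\mathcal{C}) \to Z_1(G^n_M(r))/Z_1^s(G^n_M(r)) \to H_1(M)$ for any $r \le s$ with $s$ below the injectivity radius. It is also the map induced from $Z_1(\mathcal{C}) = Z_1(G^n_M(r)) \to H_1(M)$. This is well defined because each boundary of a $2$-simplex has diameter at most $r < \mathrm{inj}(M)$ and so lies in $B_1(M)$. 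Throughout I work with the Poisson complex and de-Poissonize exactly as in \Cref{sec:main proofs}, conditioning on $N=n$ at a polynomial cost $O(n^{1/2})$; this is absorbed because all the Poisson bounds hold with probability $1-r^\kappa$ for arbitrary $\kappa$.

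For (1), the images of $H_1(\mathcal{C}) \to H_1(M)$ and $Z_1(G^n_M(r)) \to H_1(M)$ coincide, since $Z_1(\mathcal{C}) \to H_1(\mathcal{C})$ is surjective. Surjectivity and vanishing therefore follow directly from \Cref{surjectivity}.

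For (2), I take $s = Q' r|\log r|$ with $Q'$ from \Cref{isomorphism}. For small $r$ we have $s \ge r$, so \eqref{eq:complex iso} gives $H_1(\mathcal{C})/H_1^{s}(\mathcal{C}) \cong Z_1(G^n_M(r))/Z_1^{s}(G^n_M(r))$. By \Cref{isomorphism} (or more precisely by its proof, since we want the natural map), the right side is isomorphic to $H_1(M)$ with high probability.

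For (3), I apply \Cref{need at least rlogr} with $\lowerlambda=\upperlambda=\lam$, obtaining $q$. Since $qr|\log r| \ge r$ for small $r$, \eqref{eq:complex iso} again applies. The nontrivial kernel of $Z_1(G)/Z_1^{qr|\log r|}(G) \to H_1(M)$ then transfers, through the factorization above, to a nontrivial kernel of $H_1(\mathcal{C})/H_1^{qr|\log r|}(\mathcal{C}) \to H_1(M)$.

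The last claim of (3) needs a small argument. If $H_1(\mathcal{C}) \to H_1(M)$ were injective, then the quotient map $H_1(\mathcal{C}) \to H_1(\mathcal{C})/H_1^{s}(\mathcal{C})$ followed by the induced map would be injective. That would force $H_1^{s}(\mathcal{C})=0$, and the induced map would then itself be injective. This contradicts the nontrivial kernel, so $H_1(\mathcal{C}) \to H_1(M)$ is not injective and hence not an isomorphism.

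The only delicate point I expect is checking that the isomorphism \eqref{eq:complex iso} is compatible with the natural maps to $H_1(M)$, so that kernels correspond. This follows because every map involved is induced from the inclusion $Z_1(G) \hookrightarrow Z_1(M)$.
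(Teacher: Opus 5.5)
Your proposal is correct and follows essentially the same route as the paper. You prove (1) via the surjection $Z_1(\mathcal{C}) = Z_1(G^n_M(r)) \to H_1(\mathcal{C})$ and \Cref{surjectivity}, and (2) and (3) via \eqref{eq:complex iso} with $s = Q'r|\log r|$ and $s = qr|\log r|$. The final claim of (3) goes through the factorization via the quotient $H_1(\mathcal{C}) \to H_1(\mathcal{C})/H_1^{qr|\log r|}(\mathcal{C})$, and your contrapositive injectivity argument there is equivalent to the paper's lifting of a kernel element. The de-Poissonization remark is unnecessary, since $\mathcal{C}$ is already built on the Poisson process; the Poisson-level statements established in the proofs of the main theorems apply directly.
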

\begin{proof}
    (1) follows immediately from \cref{surjectivity}
    combined with the fact that
    the natural map
    \[
        Z_1(\mathcal{C}) =
        Z_1(G_M^n(r)) \to
        H_1(M)
    \]
    is equal to the composition of 
    natural maps
    \[
        Z_1(\mathcal{C}) \surj
        H_1(\mathcal{C}) \to H_1(M),
    \]
    with the first map always surjective by construction.

    (2) follows immediately
    from \cref{isomorphism}
    and \eqref{eq:complex iso}
    with $s=Q'r|\log r|$,
    once $r>0$ is sufficiently small
    that $Q'r|\log r| \ge r$.
    
    (3) follows immediately 
    from \cref{need at least rlogr}
    and \eqref{eq:complex iso}
    with $s = qr|\log r|$,
    once $r>0$ is sufficiently small
    that $qr|\log r| \ge r$.
    The very last statement
    follows from the fact 
    that the natural map $H_1(\mathcal{C}) \to H_1(M)$
    is equal to the composition of
    natural maps
    \[
    H_1(\mathcal{C}) \surj H_1(\mathcal{C})/H_1^{q r|\log r|}(\mathcal{C})\to H_1(M),
    \]
    with the first map always surjective
    by construction.
\end{proof}

Thus, without any modification, $H_1(\mathcal{C})$ is not a much better model for $H_1(M)$ than $H_1(G^n_M(r))$
in the thermodynamic regime.
Moreover, the way the way that we 
rectify this, i.e. quotienting out 
by cycles of diameter $\Theta(r|\log r|)$,
is the same as for $G_M^n(r)$.

\section{Identification of small cycles}
\label{sec: objections}
One might object that, since
the definition of $Z_1^s(G_M(n;r))$ depends on the metric $d_M$ on $M$, \Cref{isomorphism} does not actually tell us much about how to recover 
the topology of an \emph{unknown} manifold $M$ from either a point cloud or an associated random geometric graph.
Here we address this objection.

First, suppose that one wishes to apply this theorem in the setting of topological data analysis. That is, suppose $M$
is \emph{embedded} in some high dimensional space $\R^N$,
we wish to infer
the topology of $M$
from $n$ points independently
sampled from $M \subseteq \R^N$,
and we have access to the
\emph{Euclidean} distances between those points, but not the Riemannian distance $d_M$.
Let us denote by $G(n;r)$ the geometric graph with vertex set given by these $n$ independently sampled points,
and with edges joining pairs of vertices with \emph{Euclidean} distance at most $r$.
Note that the Euclidean distances between
points may be \emph{smaller}
than the Riemannian distances.

We want an analogue of the injectivity radius, and we can take
\[
    \mathrm{inj}'(M)
    := \inf \{ \|x - y\| : x,y \in M, d_M(x,y) \ge \mathrm{inj}(M)/2 \} > 0.
\]
The key point is that
for any $x \in M$,
\[
    M \cap \{ y \in \R^d : \|x - y\| \le \mathrm{inj}' \}
\]
is contractible.

Once we have made this replacement,
the proof of the analogue of \cref{isomorphism}
is exactly analogous to our proof for $G_M(n;r)$.
Indeed, the statement of \cref{lem:almostflat}
still holds if we replace $d_M$
by the Euclidean metric restricted to $M$; that is, the geodesic normal coordinates map has arbitrarily small metric distortion on sufficiently small patches.
The only facts that we use about $d_M$ throughout the entire proof are this small metric distortion, and the fact that sufficiently small balls are contractible, and the latter has been dealt with through the introduction of $\mathrm{inj}'$.

Thus, if we take ${Z'}_1^s(G(n;r))$
to be the submodule of $Z_1(G(n;r))$
generated by cycles of \emph{Euclidean} diameter at most $s$,
we indeed have
\begin{cor}
    Fix $\lam_0 > \lam_c(d)$.
    Given $0 < \kappa < \infty$,
    there exist $0< Q'(\lam_0,\kappa,d) <\infty$
    and $r_0(\lam_0,\kappa,M)>0$
    such that if $0<r \le r_0$
    and $n r^d \ge \lam_0$,
    \[
        \Prob(
        H_1(M)
        \cong
        Z_1(G(n;r))/{Z'}^{Q'r|\log r|}_1(G(n;r)))
        \ge 1 - r^\kappa.
    \]
\end{cor}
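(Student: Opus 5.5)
The plan is to rerun the proof of \Cref{isomorphism} with $d_M$ replaced by the Euclidean metric restricted to $M$. Every geometric step either uses only bounded metric distortion in local coordinates or uses contractibility of small balls, and both survive the change of metric.

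\textbf{Step 1: local comparison.} Fix $\eta>0$. I would check that \Cref{lem:almostflat} holds with $d_M$ replaced by $\|\cdot\|$ on $M$. In geodesic normal coordinates the embedding is $C^2$ with derivative an isometry at the center, so for $\delta$ small
\[
e^{-\eta}\|x-y\|\le \|u(x)-u(y)\| \le e^{\eta}\|x-y\|.
\]
Alternatively, one can use $\|x-y\|\le d_M(x,y)\le (1+O(d_M(x,y)^2))\|x-y\|$, which holds by compactness and bounded second fundamental form. The volume statement is unchanged. Consequently \Cref{lem:identify-G-S} holds verbatim for the Euclidean-edge graph, with a different metric $\sigma$ but the same distortion bounds. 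The probabilistic inputs \Cref{A_1 likely}, \Cref{A_2 likely}, \Cref{A_3 likely} and \Cref{key event is likely} then hold for the Euclidean analogue of the event $A_{Q,\rho}(n,r)$, with all $d_M$-distances and diameters read as Euclidean ones.

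\textbf{Step 2: deterministic geometry.} This is the step I expect to need the most care. I would replace every appeal to balls $B(a,s)$ with $s\le \mathrm{inj}(M)$ by Euclidean balls intersected with $M$ of radius at most $\mathrm{inj}'(M)$, which are contractible by the key point stated above. In \Cref{approximate 1-chains} the cycle $z_i$ lies in $M\cap \bar B_{\R^N}(a_i,4Q\rho r|\log r|)$, which is contractible once $4Q\rho r|\log r|\le \mathrm{inj}'$. The triangle inequalities there hold equally well for the Euclidean metric. The only wrinkle is that the paths $\psi(a)$ are still $d_M$-geodesics, whose Euclidean diameter is at most their $d_M$-length, and that length is at most $(1+o(1))$ times the Euclidean distance at these scales; the constant $4$ absorbs this.

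\Cref{decomposition} and \Cref{induction} are purely metric and carry over directly, using the induced Euclidean metric on $M$ (closed balls, Lebesgue number, ply). For \Cref{good cover exists}, the volume comparison $\mathrm{vol}(M\cap B_{\R^N}(p,s))\approx \omega_d s^d$ still holds uniformly for small $s$ by Step 1, so the same packing argument gives the ply bound $\Delta$. \Cref{choice of Q'} then applies, with non-giant components handled by $A^3$ and the contractibility from $\mathrm{inj}'$.

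\textbf{Step 3: conclusion.} Surjectivity follows as in \Cref{prop:surj}. For the reverse inclusion, any cycle of Euclidean diameter at most $\mathrm{inj}'$ lies in a contractible set and so bounds in $M$. De-Poissonization and the dense case $n\ge r^{-(d+1)}$ go exactly as in the proof of \Cref{isomorphism}, and the First Isomorphism Theorem gives the stated bound, after adjusting $\kappa$ to absorb the constant $2$. The main obstacle is bookkeeping in Step 2: every ball, diameter and contractibility claim must refer to the same metric, and the mixed use of $d_M$-geodesic edges with Euclidean diameters has to be reconciled by uniform comparability at scale $o(1)$.
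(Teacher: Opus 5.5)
Your proposal is correct and is essentially the paper's own argument. The paper likewise just reruns the proof of \Cref{isomorphism}, observing that only small metric distortion in normal coordinates and contractibility of small balls (via $\mathrm{inj}'(M)$) are ever used. Your Step 2 makes explicit some bookkeeping the paper leaves implicit: the $d_M$-geodesic realization of edges and of $\psi(a)$, the slack in the constant $4$, and the volume comparison in \Cref{good cover exists}.

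One small correction to that step concerns contractibility of $M\cap \bar B_{\R^N}(x,s)$. It is not monotone in $s$, and it does not follow from $s\le \mathrm{inj}'(M)$ alone. Two sheets of a thin fold of $M$ that are intrinsically closer than $\mathrm{inj}(M)/2$ do not constrain $\mathrm{inj}'(M)$, yet they make such a set disconnected once $s$ slightly exceeds the fold width. So in \Cref{approximate 1-chains} you should justify contractibility directly at the scale $s=O(r|\log r|)$: for $s$ below a threshold depending on the embedding, the set is star-shaped in normal coordinates at $x$. This only costs shrinking $r_0$. For the inclusion of small cycles in the kernel, the containment $M\cap B_{\R^N}(x,\mathrm{inj}'(M))\subseteq B_M(x,\mathrm{inj}(M)/2)$ already suffices.
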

Again, the point of the above corollary
is that it does not make reference to the Riemannian metric on $M$, only Euclidean distances.\footnote{Note that $r_0$ \emph{does}
depend on the embedding of $M$ into $\R^N$
through $\mathrm{inj}'(M)$.
However, it does not seem possible for these theorems to get around some sort of assumption akin to assuming that the connectivity radius is sufficiently small compared to the injectivity radius.}

Next, let us interpret \Cref{isomorphism}
as saying that one can recover $H_1(M)$
with high probability from the \emph{graph isomorphism type} of $G_M(n;r)$.
Again one might object that the definition of small cycles in terms of $d_M$
requires knowledge of more than just the graph isomorphism type of $G_M(n;r)$.
However, note that on the
event $A_{Q,\rho}(n,r)$,
if $|\log r| \ge Q'/Q$, then
all cycles in $G^n_M(r)$ which have $d_M$-diameter at most $Q' r |\log r|$
will have $d_G$-diameter at most 
$Q' \rho |\log r|$.
In particular, if ${Z''}_1^s(G^n_M(r))$
is the submodule of $Z_1(G^n_M(r))$
generated by cycles of $d_G$-diameter
at most $s$, then on $A_{Q,\rho}(n,r)$
we have
\[
    Z_1^{Q'r|\log r|}(G^n_M(r)) \subseteq {Z''}_1^{Q' \rho |\log r|}(G^n_M(r)).
\]
Moreover, since $d_M \le rd_G$, as long as 
$Q' \rho r|\log r| \le \mathrm{inj}(M)$
we have
\[
    {Z''}_1^{Q' \rho |\log r|}(G^n_M(r))
    \subseteq
    \ker( Z_1(G^n_M(r)) \to H_1(M)).
\]
Thus,
if 
$Z_1^{Q'r|\log r|}(G^n_M(r)) = \ker( Z_1(G^n_M(r)) \to H_1(M))$,
then both are equal to 
${Z''}_1^{Q' \rho |\log r|}(G^n_M(r))$;
since the latter is defined in terms of
\emph{graph distance} $d_G$,
it only depends on the \emph{graph isomorphism type} of $G_M^n(r))$.
This observation---together with a standard ``de-Poissonization argument'', such as used in \Cref{sec:main proofs}, to go from $G^n_M(r)$ to $G_M(n;r)$)---therefore gives us:
\begin{cor}
    Fix $\lam_0 > \lam_c(d)$.
    Given $0 < \kappa < \infty$,
    there exist $0< Q''(\lam_0,\kappa,d) <\infty$
    and $r_0(\lam_0,\kappa,M)>0$
    such that if $0<r \le r_0$
    and $n r^d \ge \lam_0$,
    \[
        P(
        H_1(M)
        \cong
        Z_1(G_M(n;r))/{Z''}^{Q''|\log r|}_1(G_M(n;r))
        )
        \ge 1 - r^\kappa.
    \]
\end{cor}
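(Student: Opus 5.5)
The plan is to work first with the Poissonized graph $G^n_M(r)$ and only at the end pass to $G_M(n;r)$. On the event $A_{Q,\rho}(n,r)$ we want to show that
\[
\ker(Z_1(G^n_M(r)) \to H_1(M)) = Z_1^{Q'r|\log r|}(G^n_M(r)) = {Z''}_1^{Q'\rho|\log r|}(G^n_M(r)).
\]
Set $Q'' := Q'\rho$, taking $Q,\rho$ from \Cref{key event is likely} with some $\kappa' > \kappa$ and $Q'$ from \Cref{choice of Q'}. The first equality on $A_{Q,\rho}$ has essentially been shown in the proof of \Cref{isomorphism}. The inclusion $\supseteq$ holds once $Q'r|\log r| \le \mathrm{inj}(M)$, because small cycles lie in contractible balls. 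The inclusion $\subseteq$ is \Cref{choice of Q'}.

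Next we compare with graph diameters, following the observation preceding the statement. Let $z$ be a cycle of $d_M$-diameter at most $Q'r|\log r|$ that is one of the generators. Its vertices $p,q$ lie in a single component, since each generator can be taken connected after splitting along components as in the footnote to \Cref{choice of Q'}. Their distance is $d_M(p,q) \le Q'r|\log r| \le 2Qr|\log r|^2$ once $|\log r| \ge Q'/(2Q)$. So $A^2_{Q,\rho}$ gives
\[
d_G(p,q) \le \rho\max(Q'|\log r|, Q|\log r|) = Q'\rho|\log r|.
\]
This yields $Z_1^{Q'r|\log r|} \subseteq {Z''}_1^{Q''|\log r|}$. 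Conversely, $d_M \le r\,d_G$, so any cycle of graph diameter at most $Q''|\log r|$ has $d_M$-diameter at most $Q''r|\log r|$. For $r$ small this is below $\mathrm{inj}(M)$, so the cycle is supported in a contractible ball and lies in the kernel. The three modules therefore coincide on $A_{Q,\rho}$.

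Combining this with surjectivity (\Cref{prop:surj}, which also holds on $A_{Q,\rho}$) and the First Isomorphism Theorem gives
\[
H_1(M) \cong Z_1(G^n_M(r))/{Z''}_1^{Q''|\log r|}(G^n_M(r))
\]
on $A_{Q,\rho}$, which fails with probability at most $r^{\kappa'}$.

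The remaining work is de-Poissonization, split as in the proof of \Cref{isomorphism}. If $n \le r^{-(d+1)}$, conditioning on $N=n$ costs a factor $O(n^{1/2}) \le O(r^{-(d+1)/2})$, which is absorbed by choosing $\kappa' > \kappa + (d+1)/2$. If $n \ge r^{-(d+1)}$, then with probability $1-o(r^\kappa)$ every ball of a fixed $r/2$-net of $M$ is occupied. On that event the graph is connected, and nearby vertices are joined by graph paths of length $O(d_M/r + 1)$. Hence the same argument as \Cref{choice of Q'} (with the event $A_{2,2}$) goes through, possibly with a larger constant, and we take $Q''$ to be the maximum over the two cases.

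The main point to check carefully is the comparison step. The decomposition in \Cref{choice of Q'} produces cycles $z_U$ supported in cover elements, and these need not be connected. Their vertices could a priori lie in different components, or be far apart in the graph even though they are close in $M$. I would handle this by writing each $z_U$ as a sum of its restrictions to components, as in the footnote there. Each restriction is a cycle of $d_M$-diameter at most $Q'r|\log r|$ inside a single component, so $A^2_{Q,\rho}$ applies pairwise to its vertices and bounds its graph diameter.
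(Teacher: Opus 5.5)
Your proposal is correct and follows the paper's argument: on $A_{Q,\rho}(n,r)$ it uses the same sandwich $Z_1^{Q'r|\log r|} \subseteq {Z''}_1^{Q'\rho|\log r|} \subseteq \ker$, obtained from $A^2_{Q,\rho}$ and $d_M \le r\,d_G$, then combines it with \Cref{choice of Q'}, surjectivity, and the two-case de-Poissonization from the proof of \Cref{isomorphism}. Splitting each small generator into its restrictions to connected components is a detail the paper's sketch glosses over, and it is needed, since a small cycle meeting two components would otherwise have infinite $d_G$-diameter.
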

Thus, we can indeed with high probability recover $H_1(M)$ from the graph isomorphism type of $G_M(n;r)$
in the supercritical thermodynamic regime.

\section{Future directions}
\label{sec:future}
\subsection{Higher dimensional homology groups}
The most obvious related question to explore is whether it is possible in the thermodynamic regime to detect $H_k(M), k \ge 2$ from an appropriate random geometric simplicial complex $\mathcal{C}$; for the sake of our discussion, let us here take $\mathcal{C}$ to be the Vietoris-Rips complex $VC_{r}(X^n)$.

Work of Bobrowski and Skraba \cite{BS2020} suggests that, for $\lam$ close to $\lam_c$,
the natural maps $H_k(\mathcal{C}) \to H_k(M)$
are \emph{not} surjective for $k \ge 2$.
Instead, for each $d \ge 2$, there should be a sequence of thresholds
\[
    \lam_c(d)=\lam_{c,1}(d)
    < \lam_{c,2}(d)
    < ...
    < \lam_{c,d-1}(d) < \infty
\]
such that below $\lam_{c,k}(d)$,
the natural map $H_k(\mathcal{C}) \to H_k(M)$
is zero with high probability,
and above $\lam_{c,k}$,
it is surjective
with high probability.
The $\lam_{c,k}(d)$ are called the \emph{homological percolation thresholds}.
Note that the existence of such thresholds has not yet been established; in \cite{BS2020},
the existence of supercritical and subcritical phases is established on the flat torus, and a sharp threshold for $k=1$ coinciding with the percolation threshold (again in the case of the flat torus) is established.
Work of Duncan, Kahle, and Schweinhart \cite{DKS2025} does show the existence of sharp thresholds for all $1 \le k \le d-1$ for a \emph{discrete} model on several lattices in the $d$-torus.
However, the present article is to our knowledge the first establishing the $k=1$ homological percolation threshold for an \emph{arbitrary} Riemannian manifold.
Even establishing the existence
of a sharp threshold for $k \ge 2$ will require new tools, and even the appropriate choices of definitions are not obvious.

However, the main theorems of this paper suggest the following conjecture:
\begin{conjecture}
    For each $d \ge 2$ and each $1 \le k \le d - 1$, there exists $0 < \lam_{c,k}<\infty$
    such that the following holds.
    For $\lam \in (0,\infty)$,
    let $X^n$ be a Poisson point process with intensity measure $ndM = (\lam/r^d) dM$
    on a Riemannian manifold of $M$ of dimension $d$ and volume 1,
    and let $
    \mathcal{C} = VC_r(X^n)$
    be the associated Vietoris-Rips complex
    with radius $r$.
    Then, if $ \lam > \lam_{c,k}$, then
    \[
        \Prob(\mbox{the natural map }
        H_k(\mathcal{C}) \to H_k(M)
        \mbox{ is surjective})
        \to 1
    \]
    as $r \to 0$. Moreover, there exists
    some function $s_\lam:[0,\infty) \to [0,\infty)$
    with $s_\lam(r) = o_{r \to 0}(1), s_\lam(r) = \omega(r)$
    such that
    \[
        \Prob(\mbox{the natural map }
        Z_k(\mathcal{C})/
        Z_k^{s_{\lam}(r)}(\mathcal{C})
        \to H_k(M)
        \mbox{ is an isomorphism})
        \to 1
    \]
    as $r \to 0$,
    where $Z_k^{s_\lam(r)}(\mathcal{C})$
    is the submodule
    of $Z_k(\mathcal{C})$
    generated by $k$-cycles of diameter
    at most $s_\lam(r)$.
    \\
    On the other hand,
    if $\lam < \lam_{c,k}$, then
        \[
        \Prob(\mbox{the natural map }
        H_k(\mathcal{C})
        \to H_k(M)
        \mbox{ is the zero map})
        \to 1
    \]
    as $r \to 0$.
\end{conjecture}
Several remarks are in order:
\begin{enumerate}
\item It may also be plausible to conjecture that $s_\lam(r) = Q(\lam)s(r)$
for some $s:[0,\infty) \to [0,\infty)$
not depending on $\lam$,
and perhaps even that $s = \Theta(\log r)$,
as we have in the case $k=1$; but a different order for $s$ is also plausible. However,
an argument similar to
\Cref{need at least rlogr} shows that we can construct nontrivial elements of the kernel of $Z_k(\mathcal{C})/Z_k^{s(r)}(\mathcal{C}) \to H_k(M)$
if $s(r) = q r|\log r|$ with $q>0$
sufficiently small;
therefore we must have $s_\lam(r) = \Omega_\lam(r|\log r|)$.
\item
If the above conjecture is true, then, again, this
would give
a combinatorial
model of $M$
which recovers $H_k(M)$
but is sparser
than a Vietoris-Rips complex dense enough to have $k$\textsuperscript{th}
homology isomorphic to $H_k(M)$; the latter only emerges in the \emph{dense} regime, see Bobrowski \cite{Bobrowski22}.
\item $Z_k(\mathcal{C})$
only depends on the $k$-skeleton of $\mathcal{C}$, meaning the above conjecture would allow us to infer $H_k(M)$
from only the $k$-skeleton of a geometric random complex, just as here we found that $H_1(M)$ is recoverable from the $1$-skeleton $G_M(n;r)$.
\item We can actually compress the data further: since the Vietoris-Rips complex 
is simply the \emph{clique complex}
of $G_M(n;r)$,
if the above conjecture is true,
then
all $H_k(M), 1 \le k \le d-1$ can be inferred from the \emph{graph} $G_M(n;r)$
in the thermodynamic regime as long as $\lam > \lam_{c,d-1}(d)$.
\item 
If the above conjecture is true, one expects it to be true for many different models of continuum percolation
and hence many different models of geometric random complexes, although possibly with different values for the thresholds.
The methods here suggest that once one has constructed an appropriate analog of \cref{approximate 1-chains},
the rest should follow.
\item 
On the other hand, actually proving a higher dimensional analogue of \Cref{approximate 1-chains} will require real work; for one thing, it is not clear how to define, e.g., higher dimensional analogues of the uniqueness of the giant component, let alone prove them. 
\end{enumerate}

\subsection{Exact scale of generators of the kernel}
In another direction, note that our theorems establish that the largest $1$-cycles we need to quotient out in order to recover $H_1(M)$ are precisely of order $\Theta(r \log r)$.
Precisely, we can define the random variable
\[
    Y(r) := \inf_{s \ge 0} \ker(Z_1(G^n_M(r)) \to H_1(M)) = Z_1^{s r|\log r|}(G^n_M(r)).
\]
For fixed $\lam = nr^d$,
\Cref{isomorphism}
and \Cref{need at least rlogr}
give us $0<q \le Q< \infty$
such that as $r \to 0$,
$Y(r)$ concentrates on $[q,Q]$;
thus the family $\{Y(r)\}_{r\in (0,1]}$ is tight, and all subsequential limits are supported on a compact interval $[q,Q]$.
One may then ask: does $Y(r)$
converge in distribution as $r \to 0$?
If so, does it converge to a constant? If it did, this would give a ``critical value'' of $s$ for correctly detecting the homology of $H_1(M)$,
and it could be interesting to investigate how this value depends on $\lambda > \lambda_c$.
If $Y(r)$ instead converges to a nontrivial distribution, it would be very interesting
to understand its properties.

\bibliographystyle{plain}
\bibliography{refs}

\appendix
\section{A modified barycentric subdivision procedure} \label{sec:subdivision}

Here we prove the following fact, used in the proof of \Cref{induction}, by using a modified barycentric subdivision procedure.

\begin{prop}
    Suppose $\sigma \in C_2(M)$ with $\partial \sigma = z$
    and each $c \in z$ has $\mathrm{diam}(|c|) \le r$.
    Then there exists $\sigma' \in C_2(M)$
    such that $\partial \sigma' = z$
    and each $c' \in \sigma'$ has $\mathrm{diam}(|c'|) \le 2r$.
\end{prop}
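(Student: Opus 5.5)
The plan is to subdivide only the $2$-simplices of $\sigma$, fixing the $1$-simplices of $z$ and their parametrizations, so that the boundary stays exactly $z$. The standard barycentric subdivision operator $S$ on singular chains is natural and commutes with $\partial$, but it also subdivides the boundary: $\partial S^k\sigma = S^k z$, not $z$. We repair this with the chain homotopy $T$ satisfying $\partial T + T\partial = \mathrm{id} - S$ (Hatcher, proof of Proposition 2.21). Telescoping gives $\partial T_k + T_k \partial = \mathrm{id} - S^k$, where $T_k = \sum_{j<k} T S^j$. Applied to the $1$-chain $z$ (using $\partial z = 0$), this yields
\[
z - S^k z = \partial T_k z .
\]
Hence the candidate is
\[
\sigma' := S^k \sigma + T_k z ,
\]
which satisfies $\partial\sigma' = S^k z + (z - S^k z) = z$ exactly.

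The diameter bounds come next. Every simplex of $S^k\sigma$ is the image of a simplex of the $k$-fold barycentric subdivision of $\Delta_2$. By uniform continuity of the finitely many maps in $\sigma$ on the compact $\Delta_2$, choosing $k$ large makes all of these have diameter $\le r$. The chain $T_k z = \sum_{j<k} T S^j z$ is made of singular $2$-simplices of the form $\ell\circ\tau$, where $\ell \in z$ and $\tau:\Delta_2\to\Delta_1$ is affine. Indeed, $T$ on a $1$-simplex $\ell$ is $\ell_\#$ applied to a cone of affine simplices inside $\Delta_1$. So $|\ell\circ\tau| \subseteq |\ell|$, and its diameter is $\le \diam|\ell| \le r$ by hypothesis. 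Thus every $c' \in \sigma'$ has diameter $\le r \le 2r$, with room to spare.

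The main obstacle is bookkeeping rather than substance. The key step is to make sure $T$ is the naturally defined operator, $T(\ell) = \ell_\# T(\mathrm{id}_{\Delta_1})$, so that $TS^j z$ really is supported on the $|\ell|$. One also has to check that any cancellations among the terms cannot create simplices outside these supports, and they cannot, since cancellation only removes simplices.

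If one prefers an explicit geometric picture, the same construction can be described differently. Subdivide each $2$-simplex finely, and glue in degenerate triangles of the form $\ell\circ\tau$ along the edges lying in $z$ to reconcile the subdivided edges with the original edges. This is the ``modified barycentric subdivision'' the main text alludes to, and the factor $2r$ accommodates the larger of the two kinds of pieces.
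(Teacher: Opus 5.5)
Your proof is correct, and it takes a different route from the paper's.

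\textbf{Your route.} The identity $\partial T_k + T_k\partial = \mathrm{id} - S^k$ with $T_k=\sum_{j<k}TS^j$ is exactly the iterated chain homotopy from Hatcher's proof of Proposition 2.21. So $\partial\sigma' = z$ is immediate. By naturality, every simplex of $T_k z$ has the form $\ell\circ\tau$ with $\tau:\Delta_2\to\Delta_1$ affine (indeed $T\ell = \ell\circ[b,e_0,e_1]$), so it lies in $|\ell|$.

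\textbf{The paper's route.} The paper instead builds a new operator $S^z$. It performs barycentric subdivision of each $2$-simplex but never subdivides a face that maps to a $1$-simplex of $z$, coning the barycenter over the partially subdivided boundary. One then has to check that $S^z$ is a chain map fixing $z$, and iterate it. Simplices not touching $z$ shrink by a factor $2/3$ per step. Those having a $z$-edge as a face become thin nondegenerate triangles in $\Delta_2$ that converge in Hausdorff distance to that edge. Their images therefore have diameter $\diam|\ell| + o(1)$, and this is where the slack $2r$ is spent.

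\textbf{Comparison.} Your version uses only textbook identities. It avoids verifying the chain-map property and the contraction estimates for a new operator, which the paper leaves to the reader. It also gives the sharper bound $r$: the $S^k\sigma$ pieces can be made arbitrarily small, and the $T_kz$ pieces have diameter at most $\diam|\ell|\le r$. The paper's version produces a genuine subdivision with no degenerate simplices, in which the edges of $z$ remain literal faces. That is more geometric, but nothing downstream needs it.

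\textbf{What the application needs.} Both constructions satisfy $|\sigma'|\subseteq|\sigma|$. For yours, $|S^k\sigma|\subseteq|\sigma|$ and $|T_kz|\subseteq|z|\subseteq|\sigma|$. This containment is what \Cref{induction} actually requires, so that hypothesis (2) transfers to the subdivided chain. It is worth recording explicitly.

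\textbf{Your closing remark is inaccurate.} The paper's modified subdivision is not full subdivision plus degenerate triangles glued along $z$. In your construction the factor $2$ is not needed at all.
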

\begin{proof}
We will define a family of modified barycentric subdivision maps $S^z_k:C_k(M) \to C_k(M)$ for $k=0,1,2$;
the basic idea is to perform usual barycentric subdivision
(see e.g. the proof of Proposition 2.21 in Hatcher \cite{Hatcher}), except on $z$, which we leave unchanged.
The key point will be that after each such subdivision,
the diameter of the 2-simplices not intersecting $z$ will decrease,
and for the $2$-simplices intersecting a $1$-simplex in $z$,
their distance to that $1$-simplex decreases. Therefore, 
as we perform successive subdivisions,
the maximum diameter of all the singular $2$-simplices in the subdivision approaches the maximum diameter of the $1$-simplices in $z$, which is by assumption at most $r$.

Before we define the subdivision on singular chains,
let us recall some notation regarding \emph{linear} chains;
this will help us describe how to subdivide the \emph{standard} simplex,
and then to subdivide a singular simplex $s:\Delta_k \to M$ we will simply pushforward our construction by the defining continuous map $s$.

Let $K$ be a convex subset of a linear space. We can denote a linear map $\mathrm{conv}(e_0,...,e_k)=:\Delta_k \to K$ by $[v_0,...,v_k]$,
where $v_i \in K$ is the image of $e_i$, and this uniquely determines the map. 
Let $LC_k(K)$ be the free $\mathcal{S}$-module generated by
\emph{linear} maps $\Delta_k \to K$.
Any $b \in K$ gives a \emph{cone operator} $c_b:LC_k(K) \to LC_{k+1}(K)$ by taking $c_b[v_0,...,v_k] = [b,v_0,...,v_k]$
and extending by linearity.

Now we define our subdivision maps.
$S^z_0:C_0(M) \to C_0(M)$ is the identity.
For a singular $1$-simplex $c \in C_1(M)$,
if $c \in z$, then take $S_1(c) = c$.
Otherwise, let $S^z_1(c) = c \circ [b,e_1] - c \circ [b,e_0]$ be
the usual barycentric subdivision map, where here
$b= \frac{1}{2}e_0 + \frac{1}{2} e_1$ is the barycenter
of the standard $1$-simplex $\Delta_1$.

Now we want to define $S^z_2 c$ for a singular $2$-simplex $s \in C_2(M)$.
Let $b(\Delta_2) = \frac{1}{3}e_0 +\frac{1}{3}e_1 +\frac{1}{3}e_2$
be the barycenter of the standard 2-simplex of $\Delta_2$,
and let $b(v_0,v_1) = \frac{1}{2}v_0+\frac{1}{2}v_1$
by the barycenter of a $1$-simplex $[v_0,v_1] \in LC_1(\Delta_2)$.
Consider $\Delta_2$ as a linear $2$-chain in $LC_2(\Delta_2)$ by identifying it with the identity map $[e_0,e_1,e_2]:\Delta_2 \to \Delta_2$. For each $[v_0,v_1] \in \partial \Delta_2$,
set $S'^z[v_0,v_1] = [v_0,v_1]$ if $s \circ [v_0,v_1] \in z$,
and otherwise let $S'^z[v_0,v_1] = [b(v_0,v_1),v_1] - [b(v_0,v_1),v_0]$ be the standard barycentric subdivision of $[v_0,v_1]$. Extending $S'^z$ by linearity,
and recalling the cone map $c_b$, we finally define
\[
    S^z_2 s = s \circ c_{b(\Delta_2)}(S'^z \partial \Delta_2)
\]

For our purposes we do not need to define $S^z_k$ for $k \ge 3$.
Just as with usual barycentric subdivision, one can check
that $S^z$ is a chain map, that is, $\partial S^z = S^z \partial$,
and so in particular 
\[
    \partial S^z \sigma = S^z \partial \sigma
    = S^z z = z
\]
and by induction, for any $n \ge 1$ we have
$\partial (S^z)^{\circ n} \sigma = z$.

Lastly, we see that if we perform this
procedure with $M=K$ a convex subset of Euclidean space
(the key example being $K=\Delta_2$),
and $\sigma \in LC_2(K)$,
we can keep track of
\begin{align*}
    D_{1,n} &:= \sup \{ \mathrm{diam}(|c|) : c \in (S^z_2)^{\circ n} \sigma, \forall c' \in z, c' \notin \partial c \} \\
    D_{2,n} &:=  \sup \{ d_H(|c|,|c'|) : c \in (S^z_2)^{\circ n} \sigma, c' \in \partial c, c' \in z \}
\end{align*}
where $d_H$ is Hausdorff distance as usually defined on subsets of Euclidean space. One can verify that for all $n \ge 1$ we have
\begin{align*}
    D_{1,n} &\le (2/3) D_{1,n-1}, \\
    D_{2,n} &\le (1/3) D_{2,n-1},
\end{align*}
so that as $n \to \infty$, $D_{1,n}, D_{2,n} \to 0$.
This means that the diameter of the simplices not intersecting $z$
is tending to $0$, and the Hausdorff distance between each simplex intersecting $z$ and the $1$-simplex it intersects at tends to $0$, meaning in particular that for large $n$, the diameter of such simplices
is at most a small error plus the maximum diameter of a $1$-simplex in $z$.

Having established this for the linear case, the desired conclusion in the singular case follows immediately from the fact that all singular simplices $s:\Delta_2 \to M$ are necessarily uniformly continuous maps.

\begin{figure}
    \centering
\tikzset{every picture/.style={line width=0.75pt}} 

\begin{tikzpicture}[x=0.35pt,y=0.35pt,yscale=-1,xscale=1]

\draw   (120.5,80) -- (210,220) -- (30,220) -- cycle ;
\draw  [color={rgb, 255:red, 0; green, 0; blue, 0 }  ,draw opacity=1 ][fill={rgb, 255:red, 0; green, 0; blue, 0 }  ,fill opacity=1 ] (25,220) .. controls (25,217.24) and (27.24,215) .. (30,215) .. controls (32.76,215) and (35,217.24) .. (35,220) .. controls (35,222.76) and (32.76,225) .. (30,225) .. controls (27.24,225) and (25,222.76) .. (25,220) -- cycle ;
\draw  [color={rgb, 255:red, 0; green, 0; blue, 0 }  ,draw opacity=1 ][fill={rgb, 255:red, 0; green, 0; blue, 0 }  ,fill opacity=1 ] (205,220) .. controls (205,217.24) and (207.24,215) .. (210,215) .. controls (212.76,215) and (215,217.24) .. (215,220) .. controls (215,222.76) and (212.76,225) .. (210,225) .. controls (207.24,225) and (205,222.76) .. (205,220) -- cycle ;
\draw  [color={rgb, 255:red, 0; green, 0; blue, 0 }  ,draw opacity=1 ][fill={rgb, 255:red, 0; green, 0; blue, 0 }  ,fill opacity=1 ] (115,80) .. controls (115,77.24) and (117.24,75) .. (120,75) .. controls (122.76,75) and (125,77.24) .. (125,80) .. controls (125,82.76) and (122.76,85) .. (120,85) .. controls (117.24,85) and (115,82.76) .. (115,80) -- cycle ;
\draw  [color={rgb, 255:red, 0; green, 0; blue, 0 }  ,draw opacity=1 ][fill={rgb, 255:red, 208; green, 2; blue, 27 }  ,fill opacity=1 ] (75,140) .. controls (75,137.24) and (77.24,135) .. (80,135) .. controls (82.76,135) and (85,137.24) .. (85,140) .. controls (85,142.76) and (82.76,145) .. (80,145) .. controls (77.24,145) and (75,142.76) .. (75,140) -- cycle ;
\draw  [color={rgb, 255:red, 0; green, 0; blue, 0 }  ,draw opacity=1 ][fill={rgb, 255:red, 208; green, 2; blue, 27 }  ,fill opacity=1 ] (155,140) .. controls (155,137.24) and (157.24,135) .. (160,135) .. controls (162.76,135) and (165,137.24) .. (165,140) .. controls (165,142.76) and (162.76,145) .. (160,145) .. controls (157.24,145) and (155,142.76) .. (155,140) -- cycle ;
\draw  [color={rgb, 255:red, 0; green, 0; blue, 0 }  ,draw opacity=1 ][fill={rgb, 255:red, 208; green, 2; blue, 27 }  ,fill opacity=1 ] (115,220) .. controls (115,217.24) and (117.24,215) .. (120,215) .. controls (122.76,215) and (125,217.24) .. (125,220) .. controls (125,222.76) and (122.76,225) .. (120,225) .. controls (117.24,225) and (115,222.76) .. (115,220) -- cycle ;
\draw  [color={rgb, 255:red, 0; green, 0; blue, 0 }  ,draw opacity=1 ][fill={rgb, 255:red, 208; green, 2; blue, 27 }  ,fill opacity=1 ] (115,170) .. controls (115,167.24) and (117.24,165) .. (120,165) .. controls (122.76,165) and (125,167.24) .. (125,170) .. controls (125,172.76) and (122.76,175) .. (120,175) .. controls (117.24,175) and (115,172.76) .. (115,170) -- cycle ;
\draw [color={rgb, 255:red, 155; green, 155; blue, 155 }  ,draw opacity=1 ]   (30,220) -- (120,170) ;
\draw [color={rgb, 255:red, 155; green, 155; blue, 155 }  ,draw opacity=1 ]   (210,220) -- (120,170) ;
\draw [color={rgb, 255:red, 155; green, 155; blue, 155 }  ,draw opacity=1 ]   (120,170) -- (120,80) ;
\draw [color={rgb, 255:red, 155; green, 155; blue, 155 }  ,draw opacity=1 ]   (120,170) -- (160,140) ;
\draw [color={rgb, 255:red, 155; green, 155; blue, 155 }  ,draw opacity=1 ]   (120,170) -- (80,140) ;
\draw [color={rgb, 255:red, 155; green, 155; blue, 155 }  ,draw opacity=1 ]   (120,220) -- (120,170) ;
\draw   (391.5,79.67) -- (481,219.67) -- (301,219.67) -- cycle ;
\draw  [color={rgb, 255:red, 0; green, 0; blue, 0 }  ,draw opacity=1 ][fill={rgb, 255:red, 0; green, 0; blue, 0 }  ,fill opacity=1 ] (296,219.67) .. controls (296,216.91) and (298.24,214.67) .. (301,214.67) .. controls (303.76,214.67) and (306,216.91) .. (306,219.67) .. controls (306,222.43) and (303.76,224.67) .. (301,224.67) .. controls (298.24,224.67) and (296,222.43) .. (296,219.67) -- cycle ;
\draw  [color={rgb, 255:red, 0; green, 0; blue, 0 }  ,draw opacity=1 ][fill={rgb, 255:red, 0; green, 0; blue, 0 }  ,fill opacity=1 ] (476,219.67) .. controls (476,216.91) and (478.24,214.67) .. (481,214.67) .. controls (483.76,214.67) and (486,216.91) .. (486,219.67) .. controls (486,222.43) and (483.76,224.67) .. (481,224.67) .. controls (478.24,224.67) and (476,222.43) .. (476,219.67) -- cycle ;
\draw  [color={rgb, 255:red, 0; green, 0; blue, 0 }  ,draw opacity=1 ][fill={rgb, 255:red, 0; green, 0; blue, 0 }  ,fill opacity=1 ] (386,79.67) .. controls (386,76.91) and (388.24,74.67) .. (391,74.67) .. controls (393.76,74.67) and (396,76.91) .. (396,79.67) .. controls (396,82.43) and (393.76,84.67) .. (391,84.67) .. controls (388.24,84.67) and (386,82.43) .. (386,79.67) -- cycle ;
\draw  [color={rgb, 255:red, 0; green, 0; blue, 0 }  ,draw opacity=1 ][fill={rgb, 255:red, 208; green, 2; blue, 27 }  ,fill opacity=1 ] (346,139.67) .. controls (346,136.91) and (348.24,134.67) .. (351,134.67) .. controls (353.76,134.67) and (356,136.91) .. (356,139.67) .. controls (356,142.43) and (353.76,144.67) .. (351,144.67) .. controls (348.24,144.67) and (346,142.43) .. (346,139.67) -- cycle ;
\draw  [color={rgb, 255:red, 0; green, 0; blue, 0 }  ,draw opacity=1 ][fill={rgb, 255:red, 208; green, 2; blue, 27 }  ,fill opacity=1 ] (426,139.67) .. controls (426,136.91) and (428.24,134.67) .. (431,134.67) .. controls (433.76,134.67) and (436,136.91) .. (436,139.67) .. controls (436,142.43) and (433.76,144.67) .. (431,144.67) .. controls (428.24,144.67) and (426,142.43) .. (426,139.67) -- cycle ;
\draw  [color={rgb, 255:red, 0; green, 0; blue, 0 }  ,draw opacity=1 ][fill={rgb, 255:red, 208; green, 2; blue, 27 }  ,fill opacity=1 ] (386,169.67) .. controls (386,166.91) and (388.24,164.67) .. (391,164.67) .. controls (393.76,164.67) and (396,166.91) .. (396,169.67) .. controls (396,172.43) and (393.76,174.67) .. (391,174.67) .. controls (388.24,174.67) and (386,172.43) .. (386,169.67) -- cycle ;
\draw [color={rgb, 255:red, 155; green, 155; blue, 155 }  ,draw opacity=1 ]   (301,219.67) -- (391,169.67) ;
\draw [color={rgb, 255:red, 155; green, 155; blue, 155 }  ,draw opacity=1 ]   (481,219.67) -- (391,169.67) ;
\draw [color={rgb, 255:red, 155; green, 155; blue, 155 }  ,draw opacity=1 ]   (391,169.67) -- (391,79.67) ;
\draw [color={rgb, 255:red, 155; green, 155; blue, 155 }  ,draw opacity=1 ]   (391,169.67) -- (431,139.67) ;
\draw [color={rgb, 255:red, 155; green, 155; blue, 155 }  ,draw opacity=1 ]   (391,169.67) -- (351,139.67) ;
\draw [color={rgb, 255:red, 0; green, 0; blue, 255 }  ,draw opacity=1 ]   (301,219.67) -- (481,219.67) ;
\draw   (120.17,289.67) -- (209.67,429.67) -- (29.67,429.67) -- cycle ;
\draw  [color={rgb, 255:red, 0; green, 0; blue, 0 }  ,draw opacity=1 ][fill={rgb, 255:red, 0; green, 0; blue, 0 }  ,fill opacity=1 ] (24.67,429.67) .. controls (24.67,426.91) and (26.91,424.67) .. (29.67,424.67) .. controls (32.43,424.67) and (34.67,426.91) .. (34.67,429.67) .. controls (34.67,432.43) and (32.43,434.67) .. (29.67,434.67) .. controls (26.91,434.67) and (24.67,432.43) .. (24.67,429.67) -- cycle ;
\draw  [color={rgb, 255:red, 0; green, 0; blue, 0 }  ,draw opacity=1 ][fill={rgb, 255:red, 0; green, 0; blue, 0 }  ,fill opacity=1 ] (204.67,429.67) .. controls (204.67,426.91) and (206.91,424.67) .. (209.67,424.67) .. controls (212.43,424.67) and (214.67,426.91) .. (214.67,429.67) .. controls (214.67,432.43) and (212.43,434.67) .. (209.67,434.67) .. controls (206.91,434.67) and (204.67,432.43) .. (204.67,429.67) -- cycle ;
\draw  [color={rgb, 255:red, 0; green, 0; blue, 0 }  ,draw opacity=1 ][fill={rgb, 255:red, 0; green, 0; blue, 0 }  ,fill opacity=1 ] (114.67,289.67) .. controls (114.67,286.91) and (116.91,284.67) .. (119.67,284.67) .. controls (122.43,284.67) and (124.67,286.91) .. (124.67,289.67) .. controls (124.67,292.43) and (122.43,294.67) .. (119.67,294.67) .. controls (116.91,294.67) and (114.67,292.43) .. (114.67,289.67) -- cycle ;
\draw  [color={rgb, 255:red, 0; green, 0; blue, 0 }  ,draw opacity=1 ][fill={rgb, 255:red, 208; green, 2; blue, 27 }  ,fill opacity=1 ] (74.67,349.67) .. controls (74.67,346.91) and (76.91,344.67) .. (79.67,344.67) .. controls (82.43,344.67) and (84.67,346.91) .. (84.67,349.67) .. controls (84.67,352.43) and (82.43,354.67) .. (79.67,354.67) .. controls (76.91,354.67) and (74.67,352.43) .. (74.67,349.67) -- cycle ;
\draw  [color={rgb, 255:red, 0; green, 0; blue, 0 }  ,draw opacity=1 ][fill={rgb, 255:red, 208; green, 2; blue, 27 }  ,fill opacity=1 ] (114.67,379.67) .. controls (114.67,376.91) and (116.91,374.67) .. (119.67,374.67) .. controls (122.43,374.67) and (124.67,376.91) .. (124.67,379.67) .. controls (124.67,382.43) and (122.43,384.67) .. (119.67,384.67) .. controls (116.91,384.67) and (114.67,382.43) .. (114.67,379.67) -- cycle ;
\draw [color={rgb, 255:red, 155; green, 155; blue, 155 }  ,draw opacity=1 ]   (29.67,429.67) -- (119.67,379.67) ;
\draw [color={rgb, 255:red, 155; green, 155; blue, 155 }  ,draw opacity=1 ]   (209.67,429.67) -- (119.67,379.67) ;
\draw [color={rgb, 255:red, 155; green, 155; blue, 155 }  ,draw opacity=1 ]   (119.67,379.67) -- (119.67,289.67) ;
\draw [color={rgb, 255:red, 155; green, 155; blue, 155 }  ,draw opacity=1 ]   (119.67,379.67) -- (79.67,349.67) ;
\draw [color={rgb, 255:red, 0; green, 0; blue, 255 }  ,draw opacity=1 ]   (29.67,429.67) -- (209.67,429.67) ;
\draw [color={rgb, 255:red, 0; green, 0; blue, 255 }  ,draw opacity=1 ]   (119.67,289.67) -- (209.67,429.67) ;
\draw   (391.5,291) -- (481,431) -- (301,431) -- cycle ;
\draw  [color={rgb, 255:red, 0; green, 0; blue, 0 }  ,draw opacity=1 ][fill={rgb, 255:red, 0; green, 0; blue, 0 }  ,fill opacity=1 ] (296,431) .. controls (296,428.24) and (298.24,426) .. (301,426) .. controls (303.76,426) and (306,428.24) .. (306,431) .. controls (306,433.76) and (303.76,436) .. (301,436) .. controls (298.24,436) and (296,433.76) .. (296,431) -- cycle ;
\draw  [color={rgb, 255:red, 0; green, 0; blue, 0 }  ,draw opacity=1 ][fill={rgb, 255:red, 0; green, 0; blue, 0 }  ,fill opacity=1 ] (476,431) .. controls (476,428.24) and (478.24,426) .. (481,426) .. controls (483.76,426) and (486,428.24) .. (486,431) .. controls (486,433.76) and (483.76,436) .. (481,436) .. controls (478.24,436) and (476,433.76) .. (476,431) -- cycle ;
\draw  [color={rgb, 255:red, 0; green, 0; blue, 0 }  ,draw opacity=1 ][fill={rgb, 255:red, 0; green, 0; blue, 0 }  ,fill opacity=1 ] (386,291) .. controls (386,288.24) and (388.24,286) .. (391,286) .. controls (393.76,286) and (396,288.24) .. (396,291) .. controls (396,293.76) and (393.76,296) .. (391,296) .. controls (388.24,296) and (386,293.76) .. (386,291) -- cycle ;
\draw  [color={rgb, 255:red, 0; green, 0; blue, 0 }  ,draw opacity=1 ][fill={rgb, 255:red, 208; green, 2; blue, 27 }  ,fill opacity=1 ] (386,381) .. controls (386,378.24) and (388.24,376) .. (391,376) .. controls (393.76,376) and (396,378.24) .. (396,381) .. controls (396,383.76) and (393.76,386) .. (391,386) .. controls (388.24,386) and (386,383.76) .. (386,381) -- cycle ;
\draw [color={rgb, 255:red, 155; green, 155; blue, 155 }  ,draw opacity=1 ]   (301,431) -- (391,381) ;
\draw [color={rgb, 255:red, 155; green, 155; blue, 155 }  ,draw opacity=1 ]   (481,431) -- (391,381) ;
\draw [color={rgb, 255:red, 155; green, 155; blue, 155 }  ,draw opacity=1 ]   (391,381) -- (391,291) ;
\draw [color={rgb, 255:red, 0; green, 0; blue, 255 }  ,draw opacity=1 ]   (301,431) -- (481,431) ;
\draw [color={rgb, 255:red, 0; green, 0; blue, 255 }  ,draw opacity=1 ]   (391,291) -- (481,431) ;
\draw [color={rgb, 255:red, 0; green, 0; blue, 255 }  ,draw opacity=1 ]   (391,291) -- (301,431) ;

\end{tikzpicture}
    \caption{Subdivision of the 
    standard $2$-simplex $\Delta_2$.
    Here the blue lines represent
    $1$-simplices which we do not want to subdivide because they map to $1$-simplices in $z$.}
    \label{fig:subdivision}
\end{figure}

\end{proof}

\end{document}